\documentclass[11pt, reqno]{amsart}
\usepackage[english]{babel}
\usepackage[table]{xcolor}
\usepackage[utf8]{inputenc}
\usepackage{amsmath, amsfonts, amsthm, amscd, amssymb, fullpage, fancyhdr, upgreek, amssymb, hyperref, enumerate, comment, siunitx, enumitem, graphicx, mathrsfs, mathtools, xcolor,
microtype}
\usepackage{listings}
\usepackage[scr]{rsfso}
\usepackage{amssymb}

\newcolumntype{P}[1]{>{\centering\arraybackslash}p{#1}}

\usepackage[parfill]{parskip}

\newtheorem{theorem}{Theorem}[section]
\newtheorem{proposition}[theorem]{Proposition}
\newtheorem{lemma}[theorem]{Lemma}
\newtheorem{definition}[theorem]{Definition}

\newtheorem*{remark}{Remark}
\newtheorem*{notation}{Notation}

\numberwithin{equation}{section}

\newcommand{\bburl}[1]{\textcolor{blue}{\url{#1}}}

\usepackage{tikz}
\usepackage{tkz-tab}
\usepackage{tkz-graph}
\usetikzlibrary{shapes.geometric,positioning}

\newcommand{\hr}[1]{\href{#1}{\url{#1}}}

\newcommand{\nocontentsline}[3]{}

\newcommand{\nth}{^{\text{th}}}
\newcommand{\z}{\mathbb{Z}}
\newcommand{\q}{\mathbb{Q}}
\newcommand\rank{\operatorname{rank}}
\newcommand\ee{\operatorname{e}}

\title{Counting solutions to quadratic forms in eight variables of off-diagonal rank $3$}
\author{Aleksandra Kowalska}
\thanks{Mathematical Institute, University of Oxford}

\begin{document}

\begin{abstract}
In this note we count the number of solutions to a non-degenerate quadratic form in eight prime variables of off-diagonal rank $3$. It is a continuation of work by L. Zhao, who counted solutions to forms in at least nine variables, by B. Green, who counted solutions to `generic' (which implies off-diagonal rank $4$) forms in eight variables, and by J. Dobrowolski, who counted solutions to forms in eight variables of off-diagonal rank at most $2$.
\end{abstract}

\maketitle

\section{Introduction}
For a non-degenerate, homogeneous quadratic form $Q$ in $n \geqslant 5$ prime variables and $X, N \in \mathbb{N}$, $L=\log X$, we expect to have (for $K$ arbitrarily large):
\begin{equation}
\label{thesis}
    \sum_{x \in [X]^n}\Lambda(x)\cdot1_{Q(x)=N}=\mathfrak{S}_Q(N)\mathfrak{J}_Q(N, X) + O_K(X^{n-2}L^{-K}),
\end{equation}
where:
\begin{equation}
\label{s_def}
    \mathfrak{S}_Q(N):=\sum_{q > 1}\sum_{a \in (\z/q\z)^{\times}} e \Big( -\frac{aN}{q} \Big) \sum_{x_1, \dots, x_n \in (\z/q\z)^{\times}} e \Big( \frac{aQ(x_1, \dots, x_n)}{q} \Big)
\end{equation}
and
\begin{equation}
\label{j_def}
    \mathfrak{J}_Q(N, X):=\int_{t=-\infty}^{+\infty}e(-tN)\int_{x \in [X]^n}e(tQ(x)) \ dt \ dx_1 \dots dx_8.
\end{equation}
We note that $\mathfrak{S}_Q(N)=\Theta(1)$ and $\mathfrak{S}_Q(N)=\Theta(X^{n-2})$, as shown in \cite{nine}.

\begin{remark}
    We will often think about $Q$ as its symmetric matrix, i.e. $Q(x)=x^TQx$ (abusing the notation and denoting $Q$'s matrix also by $Q$).
\end{remark}

First, the above was proven by Hua for $Q$ a diagonal form in at least 5 variables \cite{diagonal}. Then, Liu handled a wide class of forms in at least 10 variables \cite{liu}, and Zhao proved the above result for forms in at least 9 variables \cite{nine}. A few years ago Green improved this, proving the formula for `generic' forms in 8 variables \cite{eight}, where `generic' forms are defined as follows:

A quadratic form $Q$ in eight variables in generic if, for some permutation of variables, we may write $Q=\begin{pmatrix}
    A & B\\ B^T & C
\end{pmatrix}$, where $A, B, C \in M_{4 \times 4}$ such that:
\begin{enumerate}
    \item $A, B, C$ all have rank $4$,
    \item $B^{-1}AB^{-T}C$ has four distinct eigenvalues in $\bar{\mathbb{Q}}$
\end{enumerate}
(the definition used by Green is slightly different, but equivalent to this one).

For a quadratic form $Q$ represented as a matrix, we define its off-diagonal rank to be the maximal $k$ such that $Q$ has a $k \times k$ invertible submatrix, which does not include its diagonal entries.

We notice that if $Q$ is an $n \times n$ matrix, its off-diagonal rank is at most $\lfloor \frac{n}{2} \rfloor$, so in particular an $8 \times 8$ matrix can have an off-diagonal rank at most $4$.

We notice (after Dobrowolski) that $Q$ can be written in a form $\begin{pmatrix}
    A & B\\ B^T & C
\end{pmatrix}$ with $B$ invertible if and only if its off-diagonal rank is $4$. Hence, if a form in 8 variables is generic, its off-diagonal rank equals 4.

A form is of off-diagonal rank $0$ if and only if it is diagonal, so this case is already included in Hua's result. In \cite{small_off_diag_rank}, Dobrowolski counted the number of solutions to quadratic forms in eight variables of off-diagonal rank 1 or 2. In this note, I prove the following result:

\begin{theorem}
\label{the_main_theorem}
    For $Q$ a non-degenerate and homogeneous quadratic form in $8$ variables with off-diagonal rank $3$, the number of solutions to $Q(x_1, \dots, x_8)=N$ with $x_1, \dots, x_8 \leqslant X$ is
    \begin{equation}
        \mathfrak{S}_Q(N)\mathfrak{J}_Q(N, X) + O_K(X^{6}L^{-K})
    \end{equation}
    for $K$ an arbitrary large integer (where $L = \log X$).
\end{theorem}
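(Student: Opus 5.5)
We follow the Hardy--Littlewood circle method, in the form used by Green and Dobrowolski. Write the count as
\[
\int_0^1 e(-\theta N)\sum_{x\in[X]^8}\Lambda(x)e(\theta Q(x))\,d\theta,
\]
and split $[0,1]$ into major arcs $\mathfrak{M}$ (those $\theta$ within $L^{B}/X^2$ of $a/q$ with $q\leqslant L^{B}$) and minor arcs $\mathfrak{m}$. On $\mathfrak{M}$ the standard treatment (Siegel--Walfisz plus the usual completion of the singular series and integral, exactly as in Zhao and Green) produces $\mathfrak{S}_Q(N)\mathfrak{J}_Q(N,X)+O_K(X^6L^{-K})$. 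This part is insensitive to the structure of $Q$, and we take it from the earlier works. Everything therefore reduces to proving
\[
\sup_{\theta\in\mathfrak{m}}\Big|\sum_{x}\Lambda(x)e(\theta Q(x))\Big|\ll X^8L^{-K},
\]
and in fact we need this integrated against $d\theta$, so the target is a bound of the form $X^6L^{-K}$ for the minor-arc integral.

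The structural input comes from off-diagonal rank $3$. We permute variables so that there is an invertible $3\times 3$ submatrix $B$ with rows indexed by $S=\{1,2,3\}$ and columns by $T=\{4,5,6\}$, leaving two further variables $7,8$. Maximality of the rank (no invertible $4\times4$ off-diagonal minor) forces strong constraints. Every $4\times4$ submatrix with rows in $S\cup\{i\}$ and columns in $T\cup\{j\}$, with $i\neq j$ and $i,j\notin$ the opposite set, is singular. This expresses the entries $Q_{ij}$ outside the $S\times T$ block as determined by $B$ and the rows/columns through $S,T$: roughly $Q_{ij}=u_i^TB^{-1}v_j$ off the diagonal. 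We will classify the resulting normal forms, much as Dobrowolski did for rank $\le2$. Generically $Q$ becomes, after an invertible rational change of variables acting separately on suitable blocks, a sum
\[
Q(x)=\sum_{i}\lambda_i x_i^2+\text{(a bilinear form of rank }3\text{ coupling two groups of three linear forms)},
\]
with the two spare variables entering only diagonally or through the same three linear forms.

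Given this structure, we apply Cauchy--Schwarz/van der Corput in the variables appearing in the rank-$3$ bilinear part, as in Green's argument. This reduces the exponential sum to counting solutions of a linear system in six variables, and hence to a Weyl-type bound of quality $X^{3}$-power saving times $L^{-K}$ on minor arcs. The spare variables, together with any variables appearing essentially diagonally, are handled by Hua's lemma: the mean value $\int|\sum_{p}\log p\, e(\theta\lambda p^2)|^4\,d\theta\ll X^2L^{c}$. Combining Hölder across the groups, we use sup bounds for the bilinear part and fourth-moment bounds for two diagonal-type one-dimensional sums. This gives a minor-arc contribution of $X^6L^{-K}$, provided the bilinear part saves an arbitrary power of $L$ on $\mathfrak{m}$. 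That saving follows from Vaughan's identity and the type I/II estimates, as in Green, now in dimension $3$ instead of $4$.

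\textbf{Main obstacle.} I expect the hardest step to be the normal-form classification. We must show that every off-diagonal-rank-$3$ non-degenerate $Q$ really decomposes so that (i) the exponent count works, i.e.\ we have $6$ variables in the "bilinear" sup bound and $2$ in the $L^4$ Hua bound, and (ii) in the degenerate subcases, where the linear forms coupling the spare variables are dependent on those in $B$, the change of variables preserves primality. Primality is not preserved by general rational substitutions. I would first try to avoid changing prime variables at all. The plan is to keep $x_7,x_8$ as primes, treat the terms $x_7\ell(x_S)$ as linear phases absorbed via the uniformity of the Cauchy--Schwarz step, and use non-degeneracy to ensure the diagonal coefficients of $x_7,x_8$ (after completing squares only over the rationals inside the phase) are nonzero. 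If some coefficient vanishes, we would split into a subcase in which the shape of $Q$ mirrors Dobrowolski's rank-$2$ configurations with one extra coupled pair, and reuse his estimates there.
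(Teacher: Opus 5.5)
Your major-arc step agrees with the paper: Zhao's major-arc estimate holds for $n\geqslant 5$. The minor-arc bookkeeping, however, fails by a full power of $X$.

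If the six ``bilinear'' variables are bounded in sup norm and $x_7,x_8$ by moments, then whatever H\"older exponents you choose, the moment side gives only $\int_0^1|S_7(\theta)S_8(\theta)|\,d\theta\ll XL^{O(1)}$. This is because each one-variable prime sum has $L^2$ and $L^4$ norms of order $X^{1/2}$ up to logarithms, and Hua's lemma does not improve this. An $L^{-K}$ saving in $\sup_{\mathfrak m}|S_{\mathrm{bil}}|$ therefore yields $X^7L^{O(1)-K}$, not $X^6L^{-K}$. You would need $\sup_{\mathfrak m}|S_{\mathrm{bil}}|\ll X^5L^{-K}$, and that is false. Here $\mathfrak m$ is the complement of major arcs of height $L^{5K}$. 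Take $\theta=a/q$ with $q$ a prime just above $L^{5K}$, and the form $x_1y_1+x_2y_2+x_3y_3$. Siegel--Walfisz together with the complete sum modulo $q$ give $\sum_{x,y\in[X]^3}\Lambda(x)\Lambda(y)e\big(\theta(x_1y_1+x_2y_2+x_3y_3)\big)\approx -X^6(q-1)^{-3}$, which is $\gg X^5$. So neither the claimed ``$X^3$-power saving'' nor Vaughan-type estimates can rescue this split. Moreover, when $x_7,x_8$ are coupled to the other variables through linear forms, the sum does not even factor.

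The work has to be distributed the other way round, as in Lemma \ref{the_main_tool_lemma}. Take the supremum over a \emph{single} variable $y_c$ occurring only as $dy_c^2$ with $d\neq0$; Lemma \ref{first_bounding_lemma} gives $XL^{-K}$ uniformly in a linear twist. Split the other seven variables, together with $m$ auxiliary ones, into two groups of sizes $u,v$ with $u+v=7+m$, whose mean squares are $\ll X^{2u-m-2}L^{O(1)}$ and $\ll X^{2v-m-2}L^{O(1)}$. Cauchy--Schwarz then gives $X^6L^{O(1)-K}$. By orthogonality these mean squares become counts of $z^TPy=0$ under linear constraints. Such counts save $X^2$ only when the restricted bilinear form has rank at least $2$, which is conditions (4A) and (4B). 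An isolated diagonal prime variable can therefore never sit on the moment side, since its mean square saves only one power of $X$.

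Two further ingredients are missing.

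\textbf{Auxiliary variables.} The device that makes such splittings possible, and removes your primality worry without any change of variables, is to introduce auxiliary integer variables $v_j=L_j\cdot x$ through $\int_0^1e(\beta_j(v_j-L_j\cdot x))\,d\beta_j$. All original variables stay prime, and squares can be completed inside the phase. The $\beta$-integrals turn into linear constraints in the mean squares, which must be solvable for $m$ of each group's variables (conditions (3A) and (3B)).

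\textbf{Classification.} This is the bulk of the paper: you must show that every off-diagonal-rank-$3$ form admits a split satisfying conditions (1)--(4). Your proposal defers this classification. Its starting point is also weaker than you suggest. With rows $S=\{1,2,3\}$ and columns $T=\{4,5,6\}$, the only off-diagonal $4\times4$ minors containing $S\times T$ have their extra row and column in $\{7,8\}$, so they constrain only $q_{78}$. The paper instead proceeds as follows.
\begin{itemize}
\item It uses Zhao's structure lemmas for the horizontal and vertical type ($0$ to $3$) of an invertible off-diagonal $3\times3$ minor.
\item It treats degenerate configurations separately (Lemmas \ref{five_five_zero_lemma}, \ref{zero_column_lemma}, \ref{three_columns_rank_1} and \ref{case_233_with_h_0}).
\item When all such minors have type at most $1$, it applies the structure lemmas again to other invertible minors of the same matrix, such as $Q_{1:3;\,5,7,8}$, to force the required rank-one patterns.
\end{itemize}
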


Similarly to Zhao and Dobrowolski, this is done using the circle method.

\section{Preliminaries}

\begin{definition}
    For any $X, B \in \mathbb{Z}_+$, let us define: $$\mathfrak{M}(B):=\bigcup_{1 \leqslant b \leqslant B} \bigcup_{\substack{1 \leqslant a \leqslant b, \\ \gcd(a, b)=1}} \left\{ \alpha: \Big| \alpha - \frac{a}{b} \Big| \leqslant \frac{B}{bX^2} \right\}, \ \mathfrak{m}(B)=\Big[\frac{1}{X},1+\frac{1}{X}\Big] \setminus \mathfrak{M}(B).$$
    We usually take $B=L^{5K}$ and denote $\mathfrak{M}(L^{5K}), \ \mathfrak{m}(L^{5K})$ simply by $\mathfrak{M}, \ \mathfrak{m}$.
\end{definition}

The above definition of minor and major arcs is the same as in \cite{nine} (with a slightly different notation). The only place where this definition is used is below when quoting Zhao's results and when quoting Lemma \ref{first_bounding_lemma}.

In \cite{nine}, Zhao proved that a homogeneous quadratic equation $Q$ in $n \geqslant 9$ variables has
\begin{equation*}
    \mathfrak{S}_Q(N)\mathfrak{J}_Q(N, X) + O(X^{6}L^{-K})
\end{equation*}
solutions to $Q(x_1, \dots, x_n)=N$ in $x_i \leqslant X$ prime. He did so using the circle method, in the following two propositions:

\begin{proposition}
\label{zhao_major}
    For a quadratic form in $n \geqslant 5$ variables and $\alpha \in \mathfrak{M}(L^{5K})$, we have: $$\int_{\alpha \in \mathfrak{M}}\sum_{x \in [X]^n} \Lambda(x) e(\alpha(Q(x)-N))=\mathfrak{S}_Q(N)\mathfrak{J}_Q(N, X) + O(X^{6}L^{-K/4}).$$
\end{proposition}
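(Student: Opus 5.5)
The plan is the standard major-arc analysis of the circle method. Siegel--Walfisz is used to approximate the prime-weighted exponential sum locally on each arc. The resulting main terms are then completed to the singular series $\mathfrak{S}_Q(N)$ and the singular integral $\mathfrak{J}_Q(N,X)$. Throughout, $B=L^{5K}$, and the bounds are written for general $n$: the error comes out as $O(X^{n-2}L^{-K/4})$, which is $O(X^6L^{-K/4})$ in the case $n=8$ used in the paper.

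\textbf{Local approximation.} Write $\alpha=a/b+\beta$ with $b\leqslant B$, $\gcd(a,b)=1$ and $|\beta|\leqslant B/(bX^2)$.
\begin{enumerate}
\item Split the sum $\sum_{x\in[X]^n}\Lambda(x)e(\alpha Q(x))$ according to the residue $r=x \bmod b$. Residues with some $\gcd(r_i,b)>1$ only see prime powers dividing $b$, and contribute $O(X^{n-1}L^{O(1)})$.
\item For $r\in((\z/b\z)^\times)^n$, the phase $e(aQ(r)/b)$ factors out. What remains is $\sum_{x\equiv r \,(b)}\Lambda(x)e(\beta Q(x))$.
\item Since $b\leqslant L^{5K}$, Siegel--Walfisz gives $\psi(t;b,r_i)=t/\varphi(b)+O(X\exp(-c\sqrt{L}))$ uniformly for $t\leqslant X$.
\item Apply partial summation in each of the $n$ variables in turn. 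The weight $e(\beta Q(x))$ has total variation $\ll 1+|\beta|X^2\ll B$ in each variable on the box. This replaces the sum by $\varphi(b)^{-n}I(\beta)$, where $I(\beta)=\int_{[X]^n}e(\beta Q(t))\,dt$, with error $O(X^nB\exp(-c\sqrt L))$.
\end{enumerate}
Summing over $r$ gives
\[
\sum_{x}\Lambda(x)e(\alpha Q(x))=\frac{S(a,b)}{\varphi(b)^n}I(\beta)+O\bigl(X^nB^{O(1)}\exp(-c\sqrt L)\bigr),
\qquad
S(a,b)=\sum_{r\in((\z/b\z)^\times)^n}e\Bigl(\frac{aQ(r)}{b}\Bigr).
\]
The major arcs have total measure $\ll B^3X^{-2}$, so integrating this error over $\mathfrak{M}$ costs only $O(X^{n-2}L^{-K})$.

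\textbf{Completion.} Integrating the main term against $e(-\alpha N)$ gives the truncated expression
\[
\sum_{b\leqslant B}\sum_{a}\frac{S(a,b)}{\varphi(b)^n}e\Bigl(-\frac{aN}{b}\Bigr)\int_{|\beta|\leqslant B/(bX^2)}I(\beta)e(-\beta N)\,d\beta.
\]
Two completions are needed.
\begin{itemize}
\item \emph{Extending the $\beta$-integral.} Non-degeneracy of $Q$ gives $I(\beta)\ll X^n(1+|\beta|X^2)^{-n/2}$, by a second-derivative or stationary-phase bound after a linear change of variables. Since $n\geqslant5$, the tail $|\beta|>B/(bX^2)$ contributes $\ll X^{n-2}(B/b)^{1-n/2}$. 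This lets us replace the truncated integral by $\mathfrak{J}_Q(N,X)$.
\item \emph{Extending the sum over $b$.} Here we need a bound $S(a,b)/\varphi(b)^n\ll_\varepsilon b^{-n/2+\varepsilon}$, uniformly in $a$ coprime to $b$. The sum over $a$ then gives at most $b\cdot b^{-n/2+\varepsilon}$. With $n\geqslant5$, the terms $b>B$ sum to $\ll B^{2-n/2+\varepsilon}\ll L^{-K}$. The same bound handles the cross term coming from the $\beta$-tail.
\end{itemize}
Together these give $\mathfrak{S}_Q(N)\mathfrak{J}_Q(N,X)$ with an error that is a power of $L$ smaller than $X^{n-2}$. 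The exponent $K/4$ in the statement leaves generous room for the losses incurred along the way.

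\textbf{Main obstacle.} The hardest step is the uniform bound on the complete sums $S(a,b)$ over units. By the Chinese remainder theorem, $S(a,b)$ is multiplicative in $b$, so it suffices to treat prime powers $p^k$.
\begin{itemize}
\item For $p\nmid 2\det Q$, diagonalize $Q$ over $\z_p$. The sum over units then factors into one-variable Gauss sums over units, each of size $\ll p^{k/2}$. This gives $|S|\ll C^n p^{kn/2}$.
\item For the finitely many primes dividing $2\det Q$, reduce to a $p$-adic Jordan decomposition with bounded $p$-powers. This yields $S(a,p^k)\ll_Q p^{k(n/2+O(1)/k)}$, where the implied constant depends only on $Q$.
\end{itemize}
Combining the prime-power bounds gives $S(a,b)\ll_\varepsilon b^{n/2+\varepsilon}$. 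Together with $\varphi(b)\gg b^{1-\varepsilon}$, this yields the required decay $b^{-n/2+\varepsilon}$.
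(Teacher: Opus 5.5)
The paper does not prove this proposition; it quotes it from Zhao's work, so your outline has to stand on its own. The skeleton is the standard major-arc argument: Siegel--Walfisz on arcs of height $L^{5K}$, partial summation down to $S(a,b)\varphi(b)^{-n}I(\beta)$, then completing the $\beta$-integral and the sum over $b$. The error bookkeeping there is fine.

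\textbf{The gap.} It is in the step you call the main obstacle. Diagonalizing $Q$ over $\mathbb{Z}_p$ means substituting $y=Pr$ with $P\in GL_n(\mathbb{Z}_p)$. But the sum runs over $r$ whose \emph{coordinates} $r_i$ are units, and that is not a condition on the individual $y_i$. So $S(a,p^k)$ does not factor into one-variable Gauss sums over units unless $Q$ is already diagonal, and the forms in this paper are by hypothesis far from diagonal. Your Jordan-decomposition step at $p\mid 2\det Q$ has the same defect.

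\textbf{How to repair it.} The bound $|S(a,b)|\ll_Q 2^{n\omega(b)}b^{n/2}$ is true, but you need an argument that respects the unit condition.

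For $k\ge 2$, write $r=s+p^jt$ with $j=\lceil k/2\rceil$, $s$ modulo $p^j$ and $t$ modulo $p^{k-j}$. Then $r$ is a unit iff $s$ is, and $Q(r)\equiv Q(s)+2p^j s^TQt \pmod{p^k}$. Summing over $t$ forces $2Qs\equiv 0\pmod{p^{k-j}}$. This gives $S(a,p^k)=0$ for $p\nmid 2\det Q$, and $|S(a,p^k)|\le p^{v_p(\det 2Q)+n/2}\,p^{kn/2}$ in general.

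For $k=1$ and $p\nmid 2\det Q$, use inclusion--exclusion over the set $T$ of coordinates forced to be $0$: $S(a,p)=\sum_T(-1)^{|T|}G_T$. Here $G_T$ is the complete Gauss sum of the principal submatrix $Q_{T^c;\,T^c}$, where diagonalizing is now harmless. So $|G_T|=p^{m-\rho/2}$, with $m=|T^c|$ and $\rho$ the rank of that submatrix modulo $p$. Deleting $n-m$ columns from $m$ independent rows of $Q$ lowers the rank by at most $n-m$, so $\rho\ge 2m-n$. Hence $|G_T|\le p^{n/2}$ and $|S(a,p)|\le 2^n p^{n/2}$.

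This rank inequality is exactly what your argument skips. Degenerate principal submatrices give individually large Gauss sums, and zero diagonal entries are typical in this paper. One has to check that none of them exceeds $p^{n/2}$.

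\textbf{A smaller point.} The bound $I(\beta)\ll X^n(1+|\beta|X^2)^{-n/2}$ is false for non-diagonal forms on a box, because the change of variables turns the box into a parallelepiped. For example, $\int_{[0,1]^2}e(\lambda uv)\,du\,dv\asymp \lambda^{-1}\log\lambda$. Taking products, $2x_1x_2+2x_3x_4+x_5^2$ gives a counterexample with $n=5$. Squaring, substituting $v=u+h$ and integrating the linear phase $2\lambda u^TQh$ over a box gives instead
\[
I(\beta)\ll X^n(1+|\beta|X^2)^{-n/2}\log^{n/2}(2+|\beta|X^2).
\]
Since $n\ge 5$, the extra logarithm does not affect your tail estimates.
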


\begin{proposition}
    For a quadratic form in $n \geqslant 9$ variables and any $\alpha \in \mathfrak{m}(L^{5K})$, we have: $$\int_{\alpha \in \mathfrak{m}} \sum_{x \in [X]^n} \Lambda(x) e(\alpha(Q(x)-N))=O(X^{6}L^{-K/{20}}).$$
\end{proposition}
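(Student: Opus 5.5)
The plan is to reduce the minor-arc integral to a pointwise saving on $\mathfrak{m}$ combined with a mean-value estimate that loses only a power of $L$. Both inputs come from splitting the variables. Write $S(\alpha)=\sum_{x\in[X]^n}\Lambda(x)e(\alpha Q(x))$. Since the integrand is $S(\alpha)e(-\alpha N)$, it suffices to show $\int_{\mathfrak{m}}|S(\alpha)|\,d\alpha\ll X^{n-2}L^{-K/20}$ (for $n=9$ this is the $X^{7}$-type bound, written $X^6$ in the statement by the convention for $n-2$). After permuting variables, split $x=(y,z)$ with $y$ a block of $k$ variables and $z$ the remaining $n-k\geqslant 5$. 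Then
\begin{equation*}
Q(y,z)=y^TAy+2y^TBz+z^TCz .
\end{equation*}
Non-degeneracy of $Q$ lets us choose the split so that the part of $Q$ seen by $z$ for fixed $y$ is still a genuinely quadratic form in at least five variables, i.e.\ $C$ is non-zero and of sufficiently large rank.

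\textbf{Step 1 (mean value).} Write $S(\alpha)=\sum_{y}\Lambda(y)e(\alpha y^TAy)\,T_y(\alpha)$, where
\begin{equation*}
T_y(\alpha)=\sum_{z}\Lambda(z)e\bigl(\alpha(2y^TBz+z^TCz)\bigr).
\end{equation*}
By Cauchy--Schwarz in $\alpha$,
\begin{equation*}
\int_{\mathfrak{m}}|S|\leqslant \Bigl(\sup_{\mathfrak{m}}\Sigma_1\Bigr)^{1/2}\Bigl(\int_0^1\Sigma_2\Bigr)^{1/2}.
\end{equation*}
Here $\Sigma_1,\Sigma_2$ are obtained by distributing the factors so that $\Sigma_2$ is a square of an exponential sum in $z$. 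Its integral then counts solutions of an equation $z^TCz+2y^TBz=z'^TCz'+2y^TBz'$ in $2(n-k)\geqslant 10$ variables. For this count I will use the classical bound (Hua's lemma type, or the major-arc asymptotic of Proposition \ref{zhao_major} applied to the auxiliary form in $\geqslant 10$ variables). It gives $\int_0^1|T_y|^2\ll X^{2(n-k)-2}L^{c}$ uniformly in $y$.

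\textbf{Step 2 (pointwise bound on minor arcs).} For the other factor I need $\sup_{\alpha\in\mathfrak{m}}|\cdot|$ to save a large power of $L$ over the trivial bound. I will apply Cauchy--Schwarz in the $y$-variables to remove $\Lambda(y)$, and expand the square. The sum over $y$ then becomes a product of linear geometric sums, bounded by $\prod_j\min\bigl(X,\|2\alpha(Bh)_j\|^{-1}\bigr)$ with $h=z-z'$, together with Weyl differencing of the $y^TAy$ part. A standard Dirichlet-approximation argument then applies. If $\alpha\in\mathfrak{m}(L^{5K})$, any rational approximation $a/b$ to a multiple $m\alpha$ with $m\ll 1$ (the entries of $B$, $A$ are bounded) has either $b>L^{5K}/O(1)$ or $|m\alpha-a/b|>L^{5K}/(bX^2)$. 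Summing the $\min$'s over $h$ then gives a saving of $L^{-K'}$ with $K'$ comparable to $K$, for instance $L^{-K/2}$ per pair of variables. Combining with Step 1 yields $X^{n-2}L^{c-K/4}$, which suffices, since the statement only asks for $L^{-K/20}$ and leaves room for the losses from the $\Lambda$ weights and the divisor-type bounds.

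\textbf{Main obstacle.} The delicate point is Step 2. The Cauchy--Schwarz that removes the prime weights must not destroy the minor-arc information. One has to ensure that the linear forms $(Bh)_j$ and the diagonal of $A$ are non-degenerate enough that some coefficient of $\alpha$ is a non-zero integer for most $h$. This is exactly where non-degeneracy of $Q$ and $n\geqslant 9$ are used. With $n=9$ one can afford $k=4$ and $n-k=5$: five variables for the mean value, and four for the minor-arc saving. If the split cannot be chosen with $B$ of full rank, I will instead difference in the $z$-variables using $C$, and treat the small exceptional set of $h$ with $Bh=0$ trivially, since it has density $O(X^{-1})$.
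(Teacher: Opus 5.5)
\textbf{There is a genuine gap: Step 1 and Step 2 together do not reach $X^{n-2}$. The bookkeeping is off by a full factor of $X$.} Your bound $\int_{\mathfrak m}|S|\le(\sup_{\mathfrak m}\Sigma_1)^{1/2}(\int_0^1\Sigma_2)^{1/2}$ contains only \emph{one} mean-square factor.

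\begin{itemize}
\item $\int_0^1\Sigma_2$ counts weighted solutions of a single quadratic equation in $2(n-k)$ variables. The values of that quadratic lie in an interval of length $O(X^2)$, so Cauchy--Schwarz gives $\int_0^1\Sigma_2\gg X^{2(n-k)-2}$. Its square root therefore saves at most one power of $X$ over the trivial $X^{n-k}$.
\item To reach $X^{n-2}$ you would then need $(\sup_{\mathfrak m}\Sigma_1)^{1/2}\ll X^{k-1}L^{-K/20}$, a power saving pointwise on $\mathfrak m$. That is not available. $\mathfrak m=\mathfrak m(L^{5K})$ contains rationals $a/b$ with $b$ just above $L^{5K}$, where a $k$-variable quadratic Weyl sum is typically of size $X^kb^{-k/2}=X^kL^{-O(K)}$.
\item Your Step 2 indeed only claims an $L^{-K'}$ saving. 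With the bounds you actually prove, you get $X^{k}L^{-K'}\cdot X^{n-k-1}L^{c}=X^{n-1}L^{c-K'}$. So ``combining yields $X^{n-2}L^{c-K/4}$'' does not follow.
\end{itemize}

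Three smaller problems:
\begin{itemize}
\item After Cauchy--Schwarz in $y$, the phase $e(\alpha y^TAy)$ has disappeared, so there is nothing left to Weyl-difference.
\item A pointwise bound $\sup_{\mathfrak m}|S|\ll X^nL^{-cK}$ is useless on its own, since $\mathfrak m$ has measure $\asymp 1$.
\item Proposition~\ref{zhao_major} controls only the major arcs, so it cannot give $\int_0^1|T_y|^2$.
\end{itemize}

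\textbf{The missing idea is a three-factor H\"older split.} Compare Hua's $\int|f|^5\le\sup|f|\int|f|^4$: you need one $L^\infty$ factor saving a power of $L$ and \emph{two} $L^2$ factors, each saving $X^{-1}$.

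The paper does not reprove this proposition; it quotes it from \cite{nine}. Zhao's method is exactly what Lemma~\ref{the_main_tool_lemma} packages, and it runs as follows.
\begin{itemize}
\item A non-diagonal $Q$ does not factor, so auxiliary variables $v_j=L_j(x)$ are introduced via $1_{v_j=L_j(x)}=\int_0^1 e(\beta_j(v_j-L_j(x)))\,d\beta_j$. This makes the phase split as $z^TP_Az+t^TP_Bt+dy_c^2$ over disjoint groups of variables.
\item The lone prime variable $y_c$ is bounded pointwise by $\sup_{\alpha\in\mathfrak m,\gamma}|\sum_{x\le X}\Lambda(x)e(\alpha dx^2+\gamma x)|\ll XL^{-K}$ (Lemma~\ref{first_bounding_lemma}). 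The linear twist $\gamma$ is what absorbs the $\beta_j$.
\item The mean squares over $(\alpha,\beta)$ of the other two groups are bounded by counting solutions of $x^TPy=0$, $\mathcal{A}x=0$ (Lemmas~\ref{second_bounding_lemma}--\ref{fourth_bounding_lemma}). This needs the invertibility and rank-$\geqslant 2$ conditions.
\end{itemize}

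The real work in Zhao's $n\geqslant 9$ proof is the linear-algebra case analysis, by off-diagonal rank, showing that such a split always exists. Your sentence that non-degeneracy ``lets us choose the split'' stands in for precisely that part of the argument.
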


Since Proposition \ref{zhao_major} works for $n \geqslant 5$, to prove Theorem \ref{the_main_theorem} we only need to bound the integral over the minor arcs.

Most of the proof (especially the cases with higher ranks) are very similar to the proof techniques from \cite{nine} and \cite{small_off_diag_rank}. The main ideas appearing here that do not appear in either of these papers are the following:

\begin{itemize}
    \item Abstracting out a method with Cauchy-Schwarz used in \cite{nine} and \cite{small_off_diag_rank}. This is the main contribution and it allowed me to handle more complex variable substitution.
    \item Applying the lemmas from Section \ref{section_structure} on the possible form of a matrix with off-diagonal rank $3$ multiple times to the same matrix to get more detailed information on its form (in Section \ref{section_twos}).
\end{itemize}

\begin{notation}
    We denote by $q_{ij}$ the entry of $Q$ in its $i\nth$ row and $j\nth$ column (sometimes, if the expressions for $i$ or $j$ are a bit more comples, we denote this entry as $q_{i; j}$).
    
    We also denote by $Q_{R; \ C}$ the $|R| \times |C|$ submatrix of $Q$ consisting of entries $q_{rc}$ with $r \in R$ and $c \in C$. Finally, in indices we sometimes write $i:j$ for $\{i, i+1, i+2, \dots, j\}$ and $[i]$ for $\{1, 2, \dots, i\}$.

    Let us also write $x_1^n \ll X$ if $x_i \leqslant c_iX$ for some constants $c_i$.
\end{notation}

In Section \ref{abstracting_section}, I present the main lemma used for proving Theorem \ref{the_main_theorem}. Section \ref{section_special_cases} applies that lemma to prove Theorem \ref{the_main_theorem} in three cases of matrices failing to have off-diagonal rank greater than $3$ in simple ways. Then, in Section \ref{section_structure}, I describe (after Zhao \cite{nine}) possible forms of $Q$ with an off-diagonal rank $3$. The following three sections, \ref{section_three_three}, \ref{section_three_two} and \ref{section_twos} prove Theorem \ref{the_main_theorem} in three propositions, depending on the form of $Q$. The final, brief section puts together these propositions to prove the theorem.

\textbf{Acknowledgements.} I would like to thank my supervisor Ben Green for suggesting the project and helpful discussions.

\section{Abstracting the Zhao and Dobrowolski's technique}
\label{abstracting_section}

In this section, we introduce a lemma which packages the usage of Cauchy-Schwarz for bounding minor arcs and is the main tool used in proving Theorem \ref{the_main_theorem}.

First, I state the lemma and explain how it is usually used, then prove it, indicating where its fours conditions come from. The next section contains three simple examples of applying the lemma to prove Theorem \ref{the_main_theorem} for $Q$s in specific forms.

\begin{lemma}
\label{the_main_tool_lemma}
    Let us consider an expression: \begin{multline*}
        S(\alpha) := \sum_{y_1, \dots, y_{n+m} \ll X} e(\alpha \cdot y^T M y) \ \Lambda(y_1) \dots \Lambda(y_n) \prod_{j=1}^m 1_{y_{n+j}=\sum_{i=1}^n y_il_{ji}} \\ = \sum_{y_1, \dots, y_{n+m} \ll X} e(\alpha \cdot y^TMy) \Lambda(y_1) \dots \Lambda(y_n) \prod_{j=1}^{m} \int_{\beta_j \in [0, 1]}e\Big(\beta_j\big(y_j - L_j \cdot \begin{pmatrix} y_{1} & \dots & y_{n} \end{pmatrix}^T \big) \Big) \ d\beta_j,
    \end{multline*}
    where $M$ is an $(n+m) \times (n+m)$ matrix, $y=\begin{pmatrix}
        y_1 & \dots & y_{n+m}
    \end{pmatrix}^T$ and $L_j = \begin{pmatrix} l_{j1} & l_{j2} & \dots & l_{jn}\end{pmatrix}$.

    This lemma presents sufficient conditions for us to have (for an arbitrarily large $K$): $$\big| \int_{\alpha \in \mathfrak{m}}S(\alpha)e(-N\alpha) \ d\alpha \big|=O_K(X^{n-2}L^{-K}).$$

    Suppose that there exist disjoint tuples $A=(a_1, \dots, a_u), B=(b_1, \dots, b_v), C=(c)$, such that each element of $[n+m]$ appears in exactly one of them, exactly once (so $u+v+1=n+m$).

    Now we need to introduce some notation: let $\mathcal{A}$ be an $m \times u$ matrix, whose $i\nth$ column is either $\begin{pmatrix}l_{1a_i} & \dots & l_{ma_i} \end{pmatrix}^T$ (if $a_i \leqslant n$) or has $0$s everywhere, except for $(-1)$ on its $(a_i-n)\nth$ position (if $a_i>n$).

    Finally, let us denote by $\mathcal{A}_1$ a matrix consisting of the first $u-m$ columns of $\mathcal{A}$ and by $\mathcal{A}_2$ a matrix consisting of its last $m$ columns. Let $\mathcal{B}, \mathcal{B}_1, \mathcal{B}_2$ be defined analogously.

    We need $A, B, C$ to satisfy the following conditions:
    
    $(1):$ Denoting $z=\begin{pmatrix}y_{a_1} & \dots & y_{a_u}\end{pmatrix}^T$ and $t=\begin{pmatrix}y_{b_1} & \dots & y_{b_u}\end{pmatrix}^T$, there exist symmetric matrices $P_A, P_B$ and a constant $d$ such that $$y^TMy = z^TP_Az + t^TP_Bt + dy_c^2.$$
    $(2):$ We have $d \neq 0.$
    
    $(3A):$ $\mathcal{A}_2$ is invertible, $(3B):$ $\mathcal{B}_2$ is invertible (we note that this condition implies $u, v \geqslant m$).

    $(4A):$ The rank of $P_A \cdot\begin{pmatrix}D_{u-m} \\ -\mathcal{A}_2^{-1} \cdot \mathcal{A}_1\end{pmatrix}$ is at least $2$, $(4B):$ the rank of $P_B \cdot\begin{pmatrix}D_{v-m} \\ -\mathcal{B}_2^{-1} \cdot \mathcal{B}_1\end{pmatrix}$ is at least $2$, where $\begin{pmatrix}D_{u-m} \\ -\mathcal{A}_2^{-1} \cdot \mathcal{A}_1\end{pmatrix}$ is a $u \times (u-m)$ matrix whose first $u-m$ rows form the identity matrix and whose next $m$ rows form the matrix $-\mathcal{A}_2^{-1} \cdot \mathcal{A}_1$.
\end{lemma}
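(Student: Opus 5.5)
Using condition (1) together with the Fourier representation of the $m$ linear constraints, I will split the exponential into three separable pieces and write
\[
S(\alpha)=\int_{\beta\in[0,1]^m} F_A(\alpha,\beta)\,F_B(\alpha,\beta)\,F_C(\alpha,\beta)\,d\beta .
\]
Here $F_A(\alpha,\beta)=\sum_{z\ll X} e\bigl(\alpha z^TP_Az+\beta^T\mathcal{A}z\bigr)\prod_{a_i\le n}\Lambda(y_{a_i})$, where $\mathcal{A}$ records exactly the coefficient of $y_{a_i}$ in $\sum_j\beta_j(y_{n+j}-L_jy)$ up to sign. $F_B$ is defined the same way, and $F_C=\sum_{y_c\ll X} w(y_c)\,e(\alpha d y_c^2+\gamma y_c)$, where $w$ is $\Lambda$ or $1$ and $\gamma$ is the appropriate linear form in $\beta$. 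Integrating over $\alpha\in\mathfrak m$ and using the triangle inequality gives
\[
\Bigl|\int_{\mathfrak m}S(\alpha)e(-N\alpha)\,d\alpha\Bigr| \le \sup_{\alpha\in\mathfrak m,\gamma}|F_C|\cdot \int_{\mathfrak m}\int_\beta |F_A||F_B|\,d\beta\,d\alpha .
\]
By (2) we have $d\ne0$, so the first factor is a one-variable quadratic Weyl/prime sum on the minor arcs. The standard estimate used by Zhao (Hua's lemma type bound for $\sum\Lambda(p)e(\alpha dp^2+\gamma p)$ on $\mathfrak m(L^{5K})$) bounds it by $X L^{-cK}$ uniformly in $\gamma$. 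It therefore remains to show $\int_{\mathfrak m}\int_\beta|F_AF_B|\ll X^{n-3}L^{O(1)}$.

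For that I apply Cauchy--Schwarz in $(\alpha,\beta)$, extending the $\alpha$-integral to $[0,1]$:
\[
\int\!\!\int|F_AF_B|\le\Bigl(\int\!\!\int|F_A|^2\Bigr)^{1/2}\Bigl(\int\!\!\int|F_B|^2\Bigr)^{1/2}.
\]
Expanding $\int\!\!\int|F_A|^2$ counts pairs $z,z'$ with $z^TP_Az=z'^TP_Az'$ and $\mathcal{A}z=\mathcal{A}z'$, weighted by the $\Lambda$'s, which I bound by $L^{O(1)}$ times the unweighted count. Condition (3A) says $\mathcal{A}_2$ is invertible, so the linear constraint is solved as $z'_{\text{last }m}=z_{\text{last }m}+\mathcal{A}_2^{-1}\mathcal{A}_1(z_{\text{first}}-z'_{\text{first}})$. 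Writing $h=z'_{\text{first}}-z_{\text{first}}\in\mathbb Z^{u-m}$ with $|h|\ll X$, this gives $z'=z+Gh$ with $G=\begin{pmatrix}D_{u-m}\\-\mathcal{A}_2^{-1}\mathcal{A}_1\end{pmatrix}$, up to the sign convention. The quadratic condition then becomes $2h^TG^TP_Az+h^TG^TP_AGh=0$, which for fixed $h$ is linear in $z$ with coefficient vector $P_AGh$.

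Condition (4A) says $P_AG$ has rank at least $2$, and this is the key step. For $h$ outside the kernel of $P_AG$ the linear equation cuts the $X^u$ choices of $z$ down to $O(X^{u-1})$. The $h$ in the kernel have dimension at most $u-m-2$, so they contribute at most $X^{u-m-2}\cdot X^u$. The total count is thus $\ll X^{u-m}\cdot X^{u-1}+X^{2u-m-2}\ll X^{2u-m-1}$. The same argument with (3B) and (4B) gives $X^{2v-m-1}$. Multiplying, the Cauchy--Schwarz bound is $X^{u+v-m-1}L^{O(1)}=X^{n-2}L^{O(1)}$, since $u+v+1=n+m$.

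There is a power-counting problem here: this is one power of $X$ worse than needed, so the trivial linear count loses. The saving of $X$ has to come from $F_C$ (whose trivial size is $X$, so the bound $XL^{-cK}$ only saves logs) together with a sharper treatment of the $\alpha$-integral over the quadratic condition. The main obstacle is therefore to recover this power of $X$. My plan is to avoid discarding the $\alpha$-oscillation: for fixed $h$ with $P_AGh\neq0$, I bound $\int_\alpha|\cdot|$ using the fact that the equation in $z$ is an exact linear equation with coefficients of size $O(|h|)$, and average over $h$ with a divisor bound. Failing that, I would instead keep $F_C$ inside and apply Cauchy--Schwarz to $|F_C|^2$, whose $L^2$ norm in $\alpha$ over $[0,1]$ saves $X$ exactly, and then use the rank-$\ge2$ conditions through a supremum bound for $F_A$. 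I expect the correct bookkeeping, deciding which factor takes the supremum and which takes the mean square so that the totals reach $X^{n-2}L^{-K}$, to be the delicate part.
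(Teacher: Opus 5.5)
Your reduction is the paper's. You take the supremum over the $y_c$-sum using (2) and Lemma~\ref{first_bounding_lemma}, apply Cauchy--Schwarz in $(\alpha,\beta)$, and expand the mean squares into weighted counts. Your linear substitution then turns equality of quadratic forms into a bilinear condition: your equation $2h^TG^TP_Az+h^TG^TP_AGh=0$ is exactly $h^TG^TP_A(z+z')=0$, which is the mechanism of Lemma~\ref{third_bounding_lemma}.

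However, the proposal does not prove the lemma. As written it ends with $X^{n-1}L^{-K+O(1)}$, and you look for the missing power of $X$ in the wrong place. The sum over $y_c$ is a one-variable sum, so only logarithmic savings are available there. Once $\int\!\!\int|F_A|^2$ has been expanded, no $\alpha$-oscillation is left to exploit, because the mean square \emph{is} the count. Moving the supremum onto $F_A$ would need a minor-arc estimate for a constrained multivariable sum, and none of the hypotheses supply one.

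The loss is in your final count. A bilinear equation in $N$ variables of size $X$ takes values of size $X^2$, so one expects $X^{N-2}$ solutions, not $X^{N-1}$. This is Lemma~\ref{fourth_bounding_lemma} (Zhao's Lemma~5.8): $\#\{x^TCy=0\}\ll X^{n+m-2}L$ whenever $\operatorname{rank}C\geqslant 2$. Apply it to $C=P_AG$, a $u\times(u-m)$ matrix of rank at least $2$ by (4A). This gives $\int\!\!\int|F_A|^2\ll L^{O(1)}X^{2u-m-2}$, and similarly $L^{O(1)}X^{2v-m-2}$ for $B$. Hence
\[
XL^{-K}\cdot\bigl(L^{O(1)}X^{2u-m-2}\bigr)^{1/2}\bigl(L^{O(1)}X^{2v-m-2}\bigr)^{1/2}\ll X^{u+v-m-1}L^{-K+O(1)}=X^{n-2}L^{-K+O(1)},
\]
which suffices since $K$ is arbitrary.

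In your parametrisation, the failing step is bounding the number of $z$ for fixed $h$ by $O(X^{u-1})$. Write $c_h:=P_AGh$. When $c_h\neq 0$, the equation $2c_h^Tz=-h^TG^TP_AGh$ has coefficients of size $|c_h|$, so it has only about $X^{u-1}/|c_h|$ solutions in the box, up to gcd factors and an $O(X^{u-2})$ term. If $P_AG$ has rank $r\geqslant 2$, the number of $h$ with $|c_h|\asymp H$ is $\ll X^{u-m-r}H^{r}$. Summing over dyadic $H\leqslant X$ therefore gives $X^{2u-m-2}$ up to logarithms. This matches your kernel term $X^{u-m-r}\cdot X^{u}\leqslant X^{2u-m-2}$, and that kernel term is where $r\geqslant 2$ is indispensable.
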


When we apply the lemma, the variables $y_1, \dots, y_n$ are the original variables $x_1, \dots, x_n$ from the $Q(x_1, \dots, x_n)=N$ equation and the variables $y_{n+1}, \dots, y_{n+m}$, $y_{n+j}=\sum_{i=1}^n y_il_{ji}$ are auxiliary variables, introduced to simplify the expression for $Q$ (more specifically, to be able to write it as a sum of three expressions with different variables, $Q=P_A+P_B+P_C$).

The lemma says that if one of the variables has no relations to the other ones, and the remaining variables can be separated into two `independent' groups in an appropriate way, we get the required bound on the minor arc of the integral counting the number of solutions. It `packages' four lemmas that are used multiple times together in \cite{nine} and \cite{small_off_diag_rank}. Let us now cite Lemmas \cite[4.3, 5.6, 5.7, 5.8]{nine} and use them to prove Lemma \ref{the_main_tool_lemma}.

\begin{lemma}
\label{first_bounding_lemma}
    For any $\alpha \in \mathfrak{m}, \ \beta \in \mathbb{R}, \ d \in \mathbb{Q} \setminus \{0\}$, we have:
    $$\sum_{x \leqslant X} \Lambda(x)e(\alpha dx^2 + x\beta) \ll_{d, K} XL^{-K}.$$
\end{lemma}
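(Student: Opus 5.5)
The plan is to prove the estimate by the classical Vinogradov--Vaughan method for exponential sums over primes, with Weyl differencing used to deal with the quadratic phase. The key point is that the linear twist $x\beta$ must disappear after one differencing step, so that every bound is uniform in $\beta$.

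First I would reduce the minor arc condition to a Diophantine statement about $d\alpha$. Write $d=r/s$ in lowest terms, and apply Dirichlet's theorem to $d\alpha$ with parameter $P=X^2L^{-c K}$ for a suitable constant $c$. This gives $d\alpha=a/q+\theta$ with $\gcd(a,q)=1$, $q\leqslant P$ and $|\theta|\leqslant 1/(qP)$. Suppose $q\leqslant L^{cK}$ and $|\theta|\leqslant L^{cK}/(qX^2)$ both held. Multiplying back by $s/r$ would place $\alpha$ within $O_d(L^{cK}/(b X^2))$ of a rational $a'/b$ with $b\ll_d q$. For $c$ small compared with $5$ and $X$ large, this contradicts $\alpha\in\mathfrak m(L^{5K})$. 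Hence either $q> L^{cK}$, or $q\leqslant L^{cK}$ and $|\theta|$ is large. In the second case, a standard partial-summation or transference argument (replace $a/q$ by the best approximation at a smaller scale) again produces an effective denominator $q'$ with $L^{cK}\ll q'\ll X^2L^{-cK}$. So from now on we may assume $d\alpha$ has such a rational approximation.

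Next I would apply Vaughan's identity with parameters $U=V=X^{2/5}$, say. This splits $\sum_{x\leqslant X}\Lambda(x)e(\alpha d x^2+\beta x)$ into type I sums of the form $\sum_{m\leqslant U^2}a_m\sum_{n\leqslant X/m}e(\alpha d m^2n^2+\beta mn)$, possibly with a $\log n$ weight removed by partial summation, and type II sums $\sum_{U<m\leqslant X/V}\sum_{n}a_mb_n e(\alpha d m^2n^2+\beta mn)$ with divisor-bounded coefficients.

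For type I, I would apply Cauchy--Schwarz over $m$ and then Weyl-difference the inner sum in $n$ once. The phase becomes $\alpha d m^2(2nh+h^2)+\beta m h$, whose $n$-dependence is linear with coefficient $2\alpha d m^2h$ and is independent of $\beta$. Summing the geometric series gives a bound by $\sum_{m,h}\min(X/m,\|2\alpha d m^2h\|^{-1})$. Collecting $k=2m^2h$ (or $k=2 s\cdot m^2h$ after clearing the denominator of $d$) with a divisor-function multiplicity, I would then use the standard lemma $\sum_{k\leqslant Z}\min(Y,\|k\theta'\|^{-1})\ll (YZ/q+Y+Z+q)\log(qZ)$.

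For type II, I would first apply Cauchy--Schwarz in $n$ to remove $b_n$, and then a second Cauchy--Schwarz or differencing in $m$. Expanding gives phases $\alpha d(m_1^2-m_2^2)n^2+\beta(m_1-m_2)n$. Differencing in $n$ again removes the $\beta$ term and leaves a linear sum in $n$ with coefficient $2\alpha d(m_1^2-m_2^2)h$. The number of representations of an integer $k$ as $2(m_1^2-m_2^2)h$ is bounded by a divisor-type function. Its moments contribute only powers of $L$, which is harmless.

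Combining the two cases gives a bound of shape $X L^{C}\,(q^{-1}+X^{-1/5}+qX^{-2})^{1/4}$ for an absolute $C$. With $L^{cK}\ll q\ll X^2L^{-cK}$ this is $\ll XL^{C-cK/4}$, and taking $c$ large relative to the target exponent (enlarging the arc parameter if necessary) yields $XL^{-K}$.

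I expect the main obstacle to be the bookkeeping in the type II sum. The divisor-type multiplicities coming from $2(m_1^2-m_2^2)h$ must cost only powers of $L$ rather than powers of $X$. In addition, the minor arc hypothesis, which is stated for $\alpha$ and not for $d\alpha$, has to be converted into a usable denominator $q$ in the right range, with constants depending only on $d$.
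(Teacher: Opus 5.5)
The paper does not prove this lemma; it imports it as Lemma~4.3 of Zhao's nine-variable paper. Your Diophantine reduction is fine. In fact your ``second case'' is vacuous: Dirichlet at scale $X^2L^{-cK}$ already gives $|\theta|\leqslant L^{cK}/(qX^2)$, so $\alpha\in\mathfrak{m}$ forces $L^{cK}<q\leqslant X^2L^{-cK}$ once $c<5$ and $X\geqslant X_0(d,K)$.

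The genuine gap is the type I estimate. For a block $m\sim M$, Cauchy--Schwarz over $m$ and one Weyl difference give
\[
\Big(\sum_{m\sim M}|S_m|\Big)^2\ll M\Big(X+\sum_{m\sim M}\ \sum_{1\leqslant h\leqslant X/M}\min\big(X/M,\ \|2d\alpha\, m^2h\|^{-1}\big)\Big).
\]
Only about $X$ pairs $(m,h)$ occur here, but $k=2m^2h$ ranges over an interval of length $\asymp MX$. Extending to all $k\leqslant Z\asymp MX$ and applying the $\min$-lemma with $Y=X/M$ yields, even ignoring multiplicities,
\[
\sum_{m\sim M}|S_m|\ll L^{C}\big(MX^2/q+M^2X+Mq\big)^{1/2}.
\]
This is $\ll XL^{-A}$ only if $M\ll X^{1/2}L^{-A}$ and $ML^{2A}\ll q\ll X^2L^{-2A}/M$. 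Your set-up violates both conditions:
\begin{itemize}
\item $m$ runs up to $U^2=X^{4/5}$, so the term $M^2X$ alone exceeds $X^2$;
\item $q$ may be as small as $L^{cK}$ or as large as $X^2L^{-cK}$, so for any $M$ that is a power of $X$ the term $MX^2/q$, resp.\ $Mq$, is far larger than $X^2$.
\end{itemize}
The collection step discards exactly the sparsity of $\{2m^2h\}$, which you cannot afford when $q$ is not large compared with $M$. Your type II step, by contrast, is fine, because there the triples $(m_1,m_2,h)$ really do fill out the range $|k|\ll MX$.

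What is missing is the standard treatment of type I sums. First, arrange the Vaughan parameters so that genuine type I sums have $m\leqslant X^{1/2-\delta}$; the part $U<m\leqslant UV$ can be treated as type II. Second, when $q$ is small relative to $M$, treat each $m$ separately. The inner sum is a quadratic Weyl sum with leading coefficient $d\alpha m^2=am^2/q+m^2\theta$, whose reduced denominator is $q/(q,m^2)$. When $m^2q\leqslant X^2L^{-cK}$ this approximation is admissible in Weyl's inequality, and
\[
\sum_{m\leqslant M}\frac{X}{m}\big((q,m^2)/q\big)^{1/2}\ll X\tau(q)q^{-1/2}L
\]
supplies the saving. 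For $q$ close to $X^2$ you need a transference step: re-approximate $d\alpha m^2$ and compare with $a/q$.

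The exponent margin is also tight, and this is where your closing step needs repair. The statement fixes $\mathfrak{m}=\mathfrak{m}(L^{5K})$, so you cannot enlarge the arc parameter. You therefore need a final bound $XL^{C}q^{-\eta}$ with $\eta>1/5$, and then $K$ large. If you handle the multiplicities $r(k)\leqslant\tau_3(|k|)$ by Cauchy--Schwarz against $\sum\tau_3^2$, the $q$-saving is square-rooted. You then get only $q^{-1/8}$, i.e.\ $XL^{C-5K/8}$, which is not enough. Use H\"older with exponent close to $1$ instead, or the pointwise bound $\tau_3(k)\ll X^{\varepsilon}$ when $q\geqslant X^{\delta}$.
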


\begin{lemma}
\label{second_bounding_lemma}
    For $C$ an $n \times n$ symmetric matrix, $H$ an $n \times m$ matrix and any positive real weight function $w$, we have:
    $$\int_{\substack{\alpha \in [0,1], \\ \beta \in [0,1]^m}}\Big| \sum_{x \ll X} w(x)e(\alpha x^T C x + x^T H \beta) \Big|^2 \ d\alpha \ d\beta \ll \sum_{\substack{x, y \ll X, \\ x^TCx=y^TCy, \\ Hx = Hy}}w(x)w(y).$$
\end{lemma}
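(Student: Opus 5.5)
The plan is to expand the square and use orthogonality of additive characters, exactly as in the standard proof of such mean-value estimates. I will assume (as in every application in the paper) that $C$ and $H$ have rational entries, so that there is a positive integer $D$, depending only on $C$ and $H$, such that $DC$ and $DH$ are integer matrices.

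First I will observe that the integrand
$$F(\alpha,\beta):=\Big|\sum_{x \ll X} w(x)e(\alpha x^TCx + x^TH\beta)\Big|^2$$
is nonnegative. It is also periodic with period $D$ in $\alpha$ and in each coordinate of $\beta$. Indeed, $x^TCx\in D^{-1}\mathbb{Z}$ and the entries of $x^TH$ lie in $D^{-1}\mathbb{Z}$ for integer $x$. Because $F\geqslant 0$, I can enlarge the domain of integration:
$$\int_{[0,1]^{1+m}}F \leqslant \int_{[0,D]^{1+m}}F.$$
This enlargement to the full period box is the one point needing care. Over $[0,1]$ alone the character integrals are not exact indicators when the exponents are non-integral, so enlarging the box is what makes the next step give an exact identity.

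Next I will expand $F(\alpha,\beta)$ as a double sum over $x,y \ll X$:
$$F(\alpha,\beta)=\sum_{x,y} w(x)w(y)\,e\big(\alpha(x^TCx-y^TCy)\big)\,e\big((Hx-Hy)^T\beta\big).$$
The sums are finite, so I can interchange sum and integral. For $r\in D^{-1}\mathbb{Z}$ we have $\int_0^D e(\alpha r)\,d\alpha = D\cdot 1_{r=0}$. Applying this to the $\alpha$-integral and to each of the $m$ coordinates of the $\beta$-integral gives
$$\int_{[0,D]^{1+m}}F = D^{1+m}\sum_{\substack{x, y \ll X,\\ x^TCx=y^TCy,\\ Hx=Hy}} w(x)w(y).$$
This yields the claimed bound, with implied constant $D^{1+m}$ depending only on $C$ and $H$.

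There is no real obstacle here beyond the periodicity and positivity remark in the first step. If $C$ and $H$ are integral, one can take $D=1$ and the bound is an equality.
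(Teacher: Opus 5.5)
Your proof is correct. It is the standard orthogonality argument behind Zhao's Lemma 5.6, which the paper quotes without proof, and enlarging $[0,1]^{1+m}$ to $[0,D]^{1+m}$ is the right way to handle non-integral rational $C,H$; that step needs only nonnegativity of the integrand, not periodicity.

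Your rationality hypothesis is genuinely necessary, and it holds in all of the paper's applications. For $C=\mathrm{diag}(1,\sqrt2,\sqrt3)$, $H=0$, $w\equiv 1$, the range $\alpha\in[0,c/X^2]$ alone makes the left side $\gg X^4$, while the right side is only $X^3$. Note also that $Hx=Hy$ should read $H^Tx=H^Ty$ for the dimensions to match.
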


\begin{lemma}
\label{third_bounding_lemma}
    For $C$ an $n \times n$ symmetric matrix and $H$ an $n \times m$ matrix, we have:
    $$\# \left\{ |x|, |y| \ll X : x^TCx=y^TCy, \ Hx = Hy \right\} \ll \# \left\{ |x|, |y| \ll X : x^TCy = 0, \ Hx = 0 \right\}.$$
\end{lemma}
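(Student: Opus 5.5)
The plan is to linearise the quadratic condition by the substitution $u = x - y$, $v = x + y$. The substitution turns a difference of two quadratic forms into a bilinear form, and it makes the linear constraint depend only on $u$.

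First, for a pair $(x, y)$ counted on the left, set $u = x - y$ and $v = x + y$. Since $C$ is symmetric, $x^TCy = y^TCx$, and so
\begin{equation*}
u^TCv = (x-y)^TC(x+y) = x^TCx + x^TCy - y^TCx - y^TCy = x^TCx - y^TCy.
\end{equation*}
Hence $x^TCx = y^TCy$ holds if and only if $u^TCv = 0$. By linearity of $H$ (read as the linear map acting on the vector variable, whatever the orientation of its matrix), $Hx = Hy$ holds if and only if $Hu = 0$. Every pair counted on the left therefore produces a pair $(u, v)$ with $u^TCv = 0$ and $Hu = 0$.

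Next, I check the size constraints and injectivity. If $|x|, |y| \ll X$, that is, each coordinate is at most $c_iX$, then each coordinate of $u$ and $v$ is at most $2c_iX$. So $|u|, |v| \ll X$ in the sense of the paper's notation, where the implicit constants may be enlarged. The map $(x, y) \mapsto (u, v)$ is injective on integer vectors, with inverse $x = (u+v)/2$ and $y = (v-u)/2$. Its image lies in the set of integer pairs $(u, v)$ with $u \equiv v \pmod 2$. Dropping this parity restriction only enlarges the target set, so
\begin{equation*}
\#\{|x|, |y| \ll X : x^TCx = y^TCy,\ Hx = Hy\} \leqslant \#\{|u|, |v| \ll X : u^TCv = 0,\ Hu = 0\}.
\end{equation*}
Renaming $(u, v)$ as $(x, y)$ gives the claimed bound.

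There is no real obstacle; the argument is a direct computation. The only point needing care is the convention that ``$\ll X$'' hides box constants. The right-hand side must be understood with the enlarged constants $2c_i$. This is harmless for all later uses, where only the order of magnitude in $X$ matters.
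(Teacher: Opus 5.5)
Your proof is correct: the map $(x,y)\mapsto(u,v)=(x-y,x+y)$ is injective on integer pairs, turns $x^TCx-y^TCy$ into $u^TCv$ by the symmetry of $C$ and $Hx=Hy$ into $Hu=0$, and only doubles the box, which the $\ll$ convention absorbs. The paper does not prove this lemma itself but quotes it from Zhao's Lemma 5.7, so your self-contained linearisation argument fully justifies the statement.
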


\begin{lemma}
\label{fourth_bounding_lemma}
    For $B$ an $n \times m$ matrix with $\rank(C) \geqslant 2$, we have:
    $$\# \left\{ |x|, |y| \ll X : x^TCy = 0 \right\} \ll X^{n+m-2}L.$$
\end{lemma}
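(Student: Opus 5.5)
Throughout I read the hypothesis as ``$C$ an $n \times m$ matrix with $\rank(C) \geqslant 2$''; the $B$ in the statement appears to be a typo. The plan is to isolate a $2 \times 2$ invertible minor of $C$ and freeze all the other variables. Since $\rank(C) \geqslant 2$, there are row indices $i_1 \neq i_2$ and column indices $j_1 \neq j_2$ such that $C' := C_{\{i_1,i_2\};\{j_1,j_2\}}$ is invertible. I will fix all coordinates of $x$ other than $x' := (x_{i_1}, x_{i_2})$ and all coordinates of $y$ other than $y' := (y_{j_1}, y_{j_2})$. This leaves at most $\ll X^{n-2} \cdot X^{m-2}$ choices. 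For each such choice the equation $x^TCy = 0$ becomes
$$x'^T C' y' + a^T x' + b^T y' + c = 0,$$
where $a, b \in \mathbb{Q}^2$ and $c \in \mathbb{Q}$ depend on the frozen coordinates. It then suffices to show, uniformly in $a, b, c$, that this equation has $\ll X^2 L$ solutions with $|x'|, |y'| \ll X$.

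Next I simplify the equation. Because $C'$ is invertible, I can complete the product: setting $u = C'^{-T} b$ and $v = C'^{-1} a$, the equation becomes $(x' + u)^T C' (y' + v) = k$ for a rational constant $k$. Multiplying through by a common denominator $D$, which depends only on $C$ (the shifts have bounded denominators since the entries of $x, y$ are integers), I put $X_1 = D(x'+u)$ and $Y_1 = D C'(y'+v)$. These are integer vectors of size $\ll X$, and the maps $x' \mapsto X_1$ and $y' \mapsto Y_1$ are injective. The count is therefore bounded by the number of $(p, q, r, s) \in \mathbb{Z}^4$ with $|p|, |q|, |r|, |s| \ll X$ and $pr + qs = k'$, where $k'$ is an integer.

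The main step, and the only real obstacle, is bounding this count by $\ll X^2 L$ uniformly in $k'$. I will fix $p$ and $q$.
\begin{itemize}
\item If $(p,q) = (0,0)$, there are at most $X^2$ pairs $(r,s)$.
\item If $(p,q) \neq (0,0)$, the equation $pr + qs = k'$ is linear in $(r,s)$. Its solution set is either empty or a one-dimensional lattice coset with step $(q,-p)/\gcd(p,q)$. Hence the number of solutions in the box is $\ll 1 + X\gcd(p,q)/\max(|p|,|q|)$.
\end{itemize}
Summing over $|p|, |q| \ll X$ gives $X^2 + X \sum_{p,q} \gcd(p,q)/\max(|p|,|q|)$. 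Grouping by $g = \gcd(p,q)$ and writing $p = gp_1$, $q = gq_1$, the sum is
$$\ll X \sum_{g \ll X} \sum_{\max(|p_1|,|q_1|) \ll X/g} \frac{1}{\max(|p_1|,|q_1|)} \ll X \sum_{g \ll X} \frac{X}{g} \ll X^2 L.$$
The terms with $p = 0$ or $q = 0$ are handled by the same bound, so the total is $\ll X^2 L$.

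Multiplying by the $\ll X^{n+m-4}$ choices of frozen coordinates gives the claimed bound $\ll X^{n+m-2}L$.
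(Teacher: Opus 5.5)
Your proof is correct, and you are right that the $B$ in the hypothesis should read $C$. The paper gives no argument of its own here: Lemma \ref{fourth_bounding_lemma} is quoted from Zhao \cite{nine}, along with Lemmas \ref{first_bounding_lemma}--\ref{third_bounding_lemma}, so your write-up is a self-contained proof of a statement the paper only cites.

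The points that need care are all handled correctly:
\begin{itemize}
\item The shifts $u=C'^{-T}b$ and $v=C'^{-1}a$ are $\ll X$, since $a,b$ depend linearly on the frozen coordinates.
\item Their denominators are bounded in terms of $C$ alone. Alternatively, you can clear denominators in $C$ at the outset, since $x^TCy=0$ is unchanged by scaling $C$. This is worth saying explicitly, because in the proof of Lemma \ref{the_main_tool_lemma} the lemma is applied to the rational matrix appearing in condition $(4A)$.
\item Most importantly, your bound for $pr+qs=k'$ is uniform in $k'$. This is needed because $k'$ varies with the frozen coordinates.
\end{itemize}

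Your gcd-sum estimate is also sharp. For $n=m=2$ and $C=I_2$, the count is essentially the multiplicative energy of $[X]$, which has order $X^2\log X$, so the factor $L$ in the lemma cannot be dropped.
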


\begin{proof}[Proof of Lemma \ref{the_main_tool_lemma}]
    To simplify notation in the proof, let us define: $\Lambda_{i \leqslant n}(x)=\begin{cases}
        \Lambda(x) & \text{if } i \leqslant n \\
        1 & \text{otherwise}
    \end{cases}$.

    First, using assumption $(1)$ and applying Cauchy-Schwarz, we can bound:
    \begin{equation}
        \Big| \int_{\alpha \in \mathfrak{m}} S(\alpha)e(-N\alpha) d\alpha \Big| \leqslant \int_{\alpha \in \mathfrak{m}} |S(\alpha)| \ d\alpha \leqslant T_1 \cdot T_2^{1/2} \cdot T_3^{1/2},
    \end{equation}
    where:
    \begin{equation*}
        T_1 = \sup_{\substack{\alpha \in \mathfrak{m}, \\ \gamma \in \mathbb{R}}} \Big| \sum_{y_c \ll X} \Lambda(y_c) e(\alpha d y_c^2 + \gamma y_c) \Big|,
    \end{equation*}
    \begin{equation*}
        T_2 =\int_{\substack{\alpha \in \mathfrak{m}, \\ \beta \in [0, 1]^m}} \bigg| \sum_{y_{a_1}, \dots, y_{a_u} \ll X} \ \prod_{i=1}^n \Lambda_{a_i \leqslant n}(y_{a_i}) \cdot e\Big(\alpha y^T P_A y + \sum_{i=n+1}^{n+m} \beta_i\big(1_{i \in A}y_{i} - \sum_{j=1}^u L_{ia_j} y_{a_j}\big)\Big) \bigg|^2 d \alpha \ d \beta
    \end{equation*}
    and
    \begin{equation*}
        T_3 = \int_{\substack{\alpha \in \mathfrak{m}, \\ \beta \in [0, 1]^m}} \bigg| \sum_{y_{b_1}, \dots, y_{b_v} \ll X} \ \prod_{i=1}^n \Lambda_{b_i \leqslant n}(y_{b_i}) \cdot e\Big(\alpha y^T P_B y + \sum_{i=n+1}^{n+m} \beta_i\big(1_{i \in B}y_{i} - \sum_{j=1}^v L_{ib_j} y_{b_j}\big)\Big) \bigg|^2 d \alpha \ d \beta.
    \end{equation*}
    The expression $T_1$ can be bounded by $O(XL^{-K})$, using Lemma \ref{first_bounding_lemma} and assumption $(2)$.
    
    As $n=u+v+1-m$, it suffices to bound $T_2$ by $O(X^{2u-m-2}L^{C})$ and $T_3$ by $O(X^{2v-m-2}L^C)$ for some constant $C$. Let us now bound $T_2$ (bounding $T_3$ is completely analogous).

    By Lemmas \ref{second_bounding_lemma} and \ref{third_bounding_lemma}, we have $$T_2 \ll L^{2n} \# \{ |y|, |z| \ll X: z^TP_A y = 0, \mathcal{A}z = 0 \}$$
    for $y, z$ vectors of lengths $u$.
    We note that the expression above has $2u$ variables. Using condition $(3)$ that $\rank(\mathcal{A})=m$, we can write $m$ of variables in the $z$ vector in terms of the remaining $u-m$ variables. After doing this, we have (for $\mathcal{A}$ etc. defined as in the condition $(4)$): $$T_2 \ll L^{2n} \# \{ |y_1^u|, |z_1^{u-m}| \ll X : y^T P_A \begin{pmatrix}D_{u-m} \\ -\mathcal{A}_2^{-1} \cdot \mathcal{A}_1\end{pmatrix} z = 0 \}.$$ Condition $(4)$ ensures that we can apply Lemma \ref{fourth_bounding_lemma} to bound $T_2 \ll L^{2n+1}X^{2u-m-2}$, as required.
\end{proof}

\begin{remark}
    We note that the lemma essentially allows us to forget that what we are doing is bounding expressions using Cauchy-Schwarz, and instead just think about the quadratic in question in terms of linear algebra.
\end{remark}

\begin{remark}
    We note that when $Q(x_1, \dots, x_8)=R(x_1, \dots, x_i) + S(x_{i+1}, \dots, x_7) + tx_8^2$ for $2 \leqslant i \leqslant 5$, we may apply Lemma \ref{the_main_tool_lemma} without introducing any extra variables. We would have: $A=(x_1, \dots, x_i), \ B=(x_{i+1}, \dots, x_7), \ C=(x_8)$, $P_A=R, \ P_B=S$ and $c=t \neq 0$. The matrices $\mathcal{A}, \mathcal{B}$ are trivial with $0$ rows, so the third and fourth conditions are trivially satisfied (observing that $\rank(Q)=8$ implies $\rank(R)=i \geqslant 2$ and $\rank(S)=7-i \geqslant 2$).
\end{remark}

\section{The proof in three special cases.}
\label{section_special_cases}
This section contains three examples of applying Lemma \ref{five_five_zero_lemma} in three `extreme' cases of matrices with off-diagonal rank $3$. They will all be used later in the following sections as components of the proof of Theorem \ref{the_main_theorem} in the general case, but are presented here separately both as simpler examples of applying Lemma \ref{the_main_tool_lemma} and for greater clarity of the proof later.

\begin{lemma}
\label{five_five_zero_lemma}
    Let $Q$ be a $8 \times 8$ matrix such that there exist five pairwise different indices $i_1, \dots,  i_5$ with $q_{jk}=0$ for any $j, k \in \{i_1, \dots, i_5\}, \ j \neq k$. Then Theorem \ref{the_main_theorem} holds for $Q$.
\end{lemma}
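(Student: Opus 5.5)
The plan is to reduce to Lemma \ref{the_main_tool_lemma}. After permuting variables we may take $\{i_1,\dots,i_5\}=\{1,\dots,5\}$. Then
\[
Q(x)=\sum_{i\leqslant 5} q_{ii}x_i^2+2\sum_{i\leqslant 5}\sum_{j=6}^{8} q_{ij}x_ix_j+R(x_6,x_7,x_8),
\]
so the first five variables interact with one another only through their squares. The obstruction sits in the $5\times 3$ block $Q_{1:5;\,6:8}$.

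\textbf{Step 1 (which diagonal entries are nonzero).} I first record which $q_{ii}$ with $i\leqslant 5$ are nonzero. Let $Z\subseteq[5]$ be the set of indices with $q_{ii}=0$. For $i\in Z$, row $i$ of $Q$ is supported in columns $6,7,8$. Non-degeneracy forces these rows to be independent, so $|Z|\leqslant 3$, and hence at least two indices $i\leqslant 5$ have $q_{ii}\neq 0$. One such index will serve as $C=(c)$, with $d=q_{cc}\neq 0$, which gives condition $(2)$. Its row in the block $Q_{1:5;6:8}$ still contributes cross terms, which Step 2 must absorb.

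\textbf{Step 2 (auxiliary variables and the two groups).} Let $r\leqslant 3$ be the rank of the block $Q_{[5]\setminus\{c\};\,6:8}$ coming from the non-$c$ diagonal variables. I introduce $m$ auxiliary variables $y_{8+k}=\sum_{i} l_{ki}x_i$. These are linear forms in the diagonal variables, chosen as a basis of the span of the linear forms $w_j=\sum_i q_{ij}x_i$, $j=6,7,8$. The cross-term part then becomes a bilinear expression in $(x_6,x_7,x_8)$ and the $y_{8+k}$.

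The term $x_c\cdot(\cdots)$ must also be removed from $C$. To do this I fold $x_c$'s contribution into the $w_j$ as well, so the auxiliary forms include $x_c$ with nonzero coefficients. $C$ then carries only $q_{cc}x_c^2$, at the cost of putting $x_c$ into the linear constraints. I will need to check that the lemma tolerates this: the $C$-column of the constraint matrix is not required to vanish. In the proof of Lemma \ref{the_main_tool_lemma} the variable $\beta$ multiplies $y_c$ linearly, and this is absorbed into $\gamma$ in $T_1$.

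The groups are $A=(x_6,x_7,x_8,y_9,\dots,y_{8+m})$ and $B=$ the remaining diagonal variables. Then $P_A$ is $R$ together with the bilinear coupling, and $P_B=\operatorname{diag}(q_{ii})$. For condition $(3A)$, the columns of the auxiliary variables are $-e_k$, so $\mathcal A_2$ is invertible. For $(4A)$, $\mathcal A_1$ involves only $x_6,x_7,x_8$, so the kernel parametrisation is essentially the identity on the first three coordinates. Non-degeneracy of $Q$ should give $P_A$ of rank $\geqslant 2$ on that space. For $(3B)$, choose $m$ of the $B$-variables on which the auxiliary forms are independent.

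\textbf{Step 3 (the main obstacle).} The difficulty is $(4B)$. We need $\operatorname{diag}(q_{ii})_{i\in B}$ to have rank $\geqslant 2$ when restricted to the kernel of the constraint map on $B$. That kernel has dimension $|B|-m=4-m$, so it is too small when $m=3$, and it can also contain zero-diagonal directions.

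My first attempt is to choose $c$, and to shrink $m$ by grouping, so that the rank condition survives. If $r\leqslant 2$, the kernel has dimension at least $2$, and I would use $|Z|\leqslant 3$ together with the off-diagonal rank assumption to show that the diagonal restricted to the kernel has rank $\geqslant 2$. If $r=3$, I would swap the roles of the groups: put some of $x_6,x_7,x_8$ in $B$ and route only part of the block through auxiliary variables, so that each side keeps a kernel of dimension $\geqslant 2$. A more refined alternative is to use the fact that off-diagonal rank $3$ constrains the block together with $R$, which may rule out the bad configuration. I expect this case analysis on $r$ and $Z$ to be the genuinely fiddly part.
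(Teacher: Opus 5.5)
\textbf{There is a genuine gap: the case where the block $Q_{1:5;\,6:8}$ has rank $3$.} Several parts of your proposal are right. Your Step 1 is correct. So is your remark that the $C$-variable may appear in the linear constraints. Your grouping also works when $Q_{1:5;\,6:8}$ has rank $m\leqslant 2$.

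For $m\leqslant 2$, note that the relevant parameter is the rank of the whole block, not your $r$, because you fold $x_c$ into the forms. You must pick $c$ with $q_{cc}\neq 0$ whose row is not needed for that rank; this is always possible. Then the kernel on $B$ has dimension $4-m\geqslant 2$. It contains no nonzero vector supported on the zero-diagonal indices, because those rows of the block are independent.

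The case $m=3$ is exactly the one the paper writes out. The off-diagonal rank hypothesis does not exclude it: an invertible $Q_{I;\,6:8}$ with $I\subseteq[5]$ is itself an off-diagonal invertible $3\times 3$ submatrix. Here the dimension count in Lemma \ref{the_main_tool_lemma} is rigid:
\begin{itemize}
\item $\mathcal{A}_2$ and $\mathcal{B}_2$ must both be invertible $m\times m$ blocks;
\item (4A) and (4B) force $u-m\geqslant 2$ and $v-m\geqslant 2$;
\item and $u+v=7+m$.
\end{itemize}
So you need $u=v=5$, but your $A=(x_6,x_7,x_8,y_9,y_{10},y_{11})$ has six entries. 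Your suggested repair, moving some of $x_6,x_7,x_8$ into $B$, breaks condition (1). Your auxiliary forms involve only $x_1,\dots,x_5$, so $R(x_6,x_7,x_8)$ keeps its cross terms $2q_{5+i,5+k}x_{5+i}x_{5+k}$, which would then link $A$ and $B$. Moreover, each auxiliary variable must stay in the same group as the $x_{5+i}$ it multiplies. As it stands, the proposal gives no admissible grouping for $m=3$.

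\textbf{The missing idea is to absorb $R$ into the auxiliary variables as well.} Set $v_i=2\sum_{j\leqslant 5}q_{5+i,j}x_j+\sum_{k=1}^{3}q_{5+i,5+k}x_{5+k}$ for $i=1,2,3$. Then $Q(x)=\sum_{j\leqslant 5}q_{jj}x_j^2+\sum_{i=1}^{3}x_{5+i}v_i$ is a sum of squares plus three hyperbolic pairs, with no $x_{5+i}x_{5+k}$ terms at all. The pairs can then be split two-and-one:
$A=(x_6,x_7,x_{i_1},v_1,v_2)$, $B=(x_8,x_{i_4},x_{i_2},x_{i_3},v_3)$, $C=(x_{i_5})$, giving $u=v=5$.
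With this grouping:
\begin{itemize}
\item (4A) is automatic from the rows $(1/2,0)$ and $(0,1/2)$ produced by the pairs in $A$.
\item (4B) is automatic from the row $(1/2,0)$ of the pair in $B$ together with the row $(0,q_{i_4 i_4})$.
\item (3A) reduces to $q_{8 i_1}\neq 0$.
\item (3B) reduces to invertibility of $Q_{6,7;\,i_2,i_3}$.
\end{itemize}
These requirements are met as follows. Take $I\supseteq Z$ with $|I|=3$ and $Q_{I;\,6:8}$ invertible; this is possible since the rows indexed by $Z$ are independent and the block has rank $3$. Laplace-expand $\det Q_{I;\,6:8}$ along the column of $x_8$ to find $i_1\in I$ with $q_{i_1 8}\neq 0$ and a nonzero complementary $2\times 2$ minor. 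Finally, let $i_4,i_5$ be the two indices outside $I$, whose diagonal entries are nonzero.
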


\begin{proof}
    Without loss of generality $i_1, \dots, i_5 = 1, \dots, 5$.
    
    In the proof, we will assume that $\rank(Q_{1:5; \ 6:8})=3$. The proofs in the other cases are analogous to this one but simpler (in each case, we will need to introduce $\rank(Q_{1:5; \ 6:8})$ new variables), and so they are left.

    Let $y=\begin{pmatrix}x_1 & x_2 & x_3 & x_4 & x_5\end{pmatrix}^T$ and $z=\begin{pmatrix}x_6 & x_7 & x_8\end{pmatrix}^T$ (so $z_i=x_{5+i}$).
    
    Let us introduce three helping variables:
    \begin{equation}
        v_i=2\sum_{j=1}^5q_{ij}y_j+\sum_{j=1}^3q_{i;5+j}z_j, \ \ 1 \leqslant i \leqslant 3
    \end{equation}
    We need to bound by $O(X^6L^{-K})$ the following:
    \begin{equation*}
        \int_{\substack{\alpha \in \mathfrak{m}, \\ \beta_1^3 \in [0, 1]}} \sum_{\substack{y_1^5 \in [X]^5, \\ z_1^3 \in [X]^3, \\ v_1^3 \ll X}} \ee\bigg( \alpha \Big( y^TDy+\sum_{i=1}^3 v_i z_i - N \Big)
        +\sum_{i=1}^3 \beta_i \Big( v_i-2\sum_{j=1}^5q_{ij}y_j-\sum_{j=1}^3q_{i;5+j}z_j \Big) \bigg) d\alpha \ d\beta_i
    \end{equation*}
    (where $D$ is a diagonal matrix).

    Before dividing the variables into tuples and applying Lemma \ref{the_main_tool_lemma}, we show that there exists a permutation $(i_1 \dots i_5)$ of $(1 \dots 5)$ such that:
    \begin{itemize}
        \item $q_{i_18} \neq 0$,
        \item $\begin{pmatrix}
            q_{6;i_2} & q_{6;i_3}\\
            q_{7;i_2} & q_{7;i_3}
        \end{pmatrix}$ is invertible,
        \item $d_{i_4}, d_{i_5} \neq 0$.
    \end{itemize}

    We claim that there exists $I \subset \{1, 2, 3, 4, 5\}, \ |I|=3$ such that $\rank(Q_{I; \ 6, 7, 8})=3$ and $d_l \neq 0$ for $l \in \{1, \dots, 5\} \setminus I$. Indeed, let $J=\{ 1 \leqslant j \leqslant 5: d_j=0\}$. We note that $|J| \leqslant 3$ (as $\rank(D)+3+3 \geqslant 8$, so $\rank(D) \geqslant 2$) and that $Q_{J; \ 6,7,8}$ has linearly independent rows, as $Q$ is non-degenerate. Hence, since $\rank(Q_{1:5; \ 6:8})=3$ (from the assumption stated at the beginning), we may pick the lacking $3-|J|$ indices to have $J \subset I$ and $\rank(Q_{I; \ 6, 7, 8})=3$.

    Without loss of generality $I=\{1,2,3\}$. Let us denote $M:=Q_{1,2,3; \ 6,7,8}$.

    Since $M$ is invertible, from a formula for its determinant there exists $k \in \{1, 2, 3\}$ such that $q_{k8}=M_{k3} \neq 0$ and $\rank(M_{\{1, 2, 3\} \setminus \{k\}; \ 1,2})=2$. We may assume $k=1$. Then, we take $i_j=j$ for $1 \leqslant j \leqslant 5$, which satisfy the requirements.

    If another ordering of $1, \dots, 5$ satisfies the requirements, we can reorder the variables to have $i_j=j$.

    Now we apply Lemma \ref{the_main_tool_lemma} with the following tuples:
    $$A=(z_{1}, z_{2}, y_{1}, v_{1}, v_{2} ), \ B = (z_{3}, y_{4}, y_{2}, y_{3}, v_{3}), \ C=(y_{5}).$$ We need to show that they satisfy the conditions of the lemma.

    $(1):$ This condition is clearly satisfied with  $$P_A=\begin{pmatrix}
        0 & 0 & 0 & 1/2 & 0\\
        0 & 0 & 0 & 0 & 1/2\\
        0 & 0 & d_{1} & 0 & 0\\
        1/2 & 0 & 0 & 0 & 0\\
        0 & 1/2 & 0 & 0 & 0\\
    \end{pmatrix}, \ P_B=\begin{pmatrix}
        0 & 0 & 0 & 0 & 1/2\\
        0 & d_{ 4} & 0 & 0 & 0\\
        0 & 0 & d_{ 2} & 0 & 0\\
        0 & 0 & 0 & d_{ 3} & 0\\
        1/2 & 0 & 0 & 0 & 0
    \end{pmatrix}, \ c=d_{ 5}.$$
    $(2):$ As we showed just before defining the tuples, we have $d_{ 5} \neq 0.$

    $(3A):$ We have $\mathcal{A}=\begin{pmatrix}
        q_{66} & q_{67} & 2q_{61} & -1 & 0\\
        q_{76} & q_{77} & 2q_{71} & 0 & -1\\
        q_{86} & q_{87} & 2q_{81} & 0 & 0
    \end{pmatrix}$, so $\mathcal{A}_1=\begin{pmatrix}
        q_{66} & q_{67}\\
        q_{76} & q_{77}\\
        q_{86} & q_{87}
    \end{pmatrix}$, $\mathcal{A}_2=\begin{pmatrix}
        2q_{61} & -1 & 0\\
        2q_{71} & 0 & -1\\
        2q_{81} & 0 & 0
    \end{pmatrix}$ and hence $\mathcal{A}_2$ is invertible as $q_{81} \neq 0$.

    $(3B):$ We have $\mathcal{B}=\begin{pmatrix}
        q_{68} & 2q_{64} & 2q_{6 2} & 2q_{6 3} & 0 \\
        q_{78} & 2q_{74} & 2q_{7 2} & 2q_{7 3} & 0 \\
        q_{88} & 2q_{84} & 2q_{8 2} & 2q_{8 3} & -1
    \end{pmatrix}$, so $\mathcal{B}_1=\begin{pmatrix}
        q_{68} & 2q_{64} \\
        q_{78} & 2q_{74} \\
        q_{88} & 2q_{84}
    \end{pmatrix}$, $\mathcal{B}_2=\begin{pmatrix}
        2q_{6 2} & 2q_{6 3} & 0 \\
        2q_{7 2} & 2q_{7 3} & 0 \\
        2q_{8 2} & 2q_{8 3} & -1
    \end{pmatrix}$ and so $\mathcal{B}_2$ is invertible as (from our assumptions) $\begin{pmatrix}
        q_{6 2} & q_{6 3}\\
        q_{7 2} & q_{7 3}
    \end{pmatrix}$ is.

    $(4A):$ We need the rank of $P_A \cdot \begin{pmatrix}
        \begin{matrix}
            1 & 0\\
            0 & 1
        \end{matrix}\\
        -\mathcal{A}_2^{-1} \cdot \mathcal{A}_1
    \end{pmatrix}$ to be at least $2$, which is satisfied as the last two rows of the product are $\begin{pmatrix}1/2 & 0\end{pmatrix}$ and $\begin{pmatrix}0 & 1/2\end{pmatrix}$.

    $(4B):$ We need the rank of $P_B \cdot \begin{pmatrix}
        \begin{matrix}
            1 & 0\\
            0 & 1
        \end{matrix}\\
        -\mathcal{B}_2^{-1} \cdot \mathcal{B}_1
    \end{pmatrix}$ to be at least $2$, which also holds as the second and the last rows of the product are respectively $\begin{pmatrix}0 & d_{4}\end{pmatrix}$ and $\begin{pmatrix}1/2 & 0\end{pmatrix}$ (and $d_4 \neq 0$ from our assumptions).
\end{proof}

\begin{lemma}
\label{zero_column_lemma}
    If $Q(x_1, \dots, x_8)=Q'(x_1, \dots, x_7)+q_{88}x_8^2$, then $Q$ satisfies Theorem \ref{the_main_theorem}.
\end{lemma}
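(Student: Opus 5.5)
The plan is to exploit the fact that $x_8$ is completely decoupled from the other variables. Then a single application of Cauchy--Schwarz, in the spirit of the proof of Lemma \ref{the_main_tool_lemma} but with no auxiliary variables and no second group $B$, should suffice. By Proposition \ref{zhao_major} it is enough to bound the minor arc integral. Write
$$S(\alpha)=S'(\alpha)\,U(\alpha),\quad S'(\alpha)=\sum_{x_1^7\in[X]^7}\Lambda(x_1)\cdots\Lambda(x_7)\,e(\alpha Q'(x_1,\dots,x_7)),\quad U(\alpha)=\sum_{x_8\leqslant X}\Lambda(x_8)e(\alpha q_{88}x_8^2).$$
Then
$$\Big|\int_{\mathfrak m}S(\alpha)e(-N\alpha)\,d\alpha\Big|\leqslant \sup_{\alpha\in\mathfrak m}|U(\alpha)|\cdot\int_0^1|S'(\alpha)|\,d\alpha\leqslant \sup_{\alpha\in\mathfrak m}|U(\alpha)|\cdot\Big(\int_0^1|S'(\alpha)|^2d\alpha\Big)^{1/2}.$$

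First, since $Q$ is non-degenerate and its last row is $(0,\dots,0,q_{88})$, we have $q_{88}\neq 0$. Lemma \ref{first_bounding_lemma} (with $\beta=0$, $d=q_{88}$) therefore gives $\sup_{\mathfrak m}|U|\ll XL^{-K}$. Non-degeneracy of $Q$ also forces $\rank(Q')=7$.

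Second, we bound the mean square of $S'$. By Lemma \ref{second_bounding_lemma} with $m=0$ (no linear constraints) and weight $w=\Lambda^{\otimes 7}\ll L^7$, we get $\int_0^1|S'|^2\ll L^{14}\#\{x,y\ll X: Q'(x)=Q'(y)\}$. Lemma \ref{third_bounding_lemma} bounds this count by $\#\{x,y\ll X: x^TQ'y=0\}$. Since $\rank(Q')=7\geqslant 2$, Lemma \ref{fourth_bounding_lemma} gives the bound $X^{14-2}L$. Hence $\int_0^1|S'|^2\ll X^{12}L^{15}$.

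Combining the two steps, the minor arc contribution is $\ll XL^{-K}\cdot X^{6}L^{15/2}$, and since $K$ is arbitrary this is $O_K(X^6L^{-K})$. Equivalently, this is the degenerate instance of Lemma \ref{the_main_tool_lemma} in which $C=(x_8)$, $A=(x_1,\dots,x_7)$ and $B$ is empty. In that case only $T_1$ and $T_2$ appear, with $T_2\ll X^{12}L^C$ and no $T_3$ factor, so the argument is exactly the lemma's proof with the $T_3$ factor removed.

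The only point needing care is bookkeeping rather than an obstacle. One must check that the unrestricted integral over $[0,1]$ (rather than over $\mathfrak m$) is allowed in the mean-square step, which is fine since the integrand is nonnegative. One must also check that the off-diagonal rank hypothesis is not needed at all here: only $\rank(Q')\geqslant 2$ and $q_{88}\neq 0$ are used.
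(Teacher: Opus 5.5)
There is a genuine gap, and it is in your final line. You have $XL^{-K}\cdot X^{6}L^{15/2}=X^{7}L^{15/2-K}$, which exceeds the target $X^{6}L^{-K}$ by a full power of $X$; it is even larger than the main term, which has size $X^6$. No choice of $K$ absorbs a power of $X$. Nor can a sharper count repair this. After extracting $\sup_{\mathfrak m}|U|\ll XL^{-K}$, you need $\int_{\mathfrak m}|S'|\ll X^{5}L^{O(1)}$, a saving of $X^{2}$ over the trivial bound $X^{7}$, but a single mean-square estimate saves only $X$. To see this, write $\int_0^1|S'|^2=\sum_n r(n)^2$, where $\sum_n r(n)\asymp X^7$ and $r$ is supported on $|n|\ll X^2$. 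Cauchy--Schwarz then gives $\int_0^1|S'|^2\gg X^{12}$, so your bound $X^{12}L^{15}$ is essentially sharp, and restricting to $\mathfrak m$ gains only powers of $L$. In the language of Lemma~\ref{the_main_tool_lemma}, the case $B=\emptyset$ is not a degenerate instance of the lemma. Its proof needs $T_3\ll X^{2v-m-2}L^{C}=X^{-2}L^{C}$, whereas for empty $B$ one has $T_3=|\mathfrak m|\asymp 1$, and condition (4B) cannot hold for an empty matrix. Each of $T_2^{1/2}$ and $T_3^{1/2}$ supplies one saving of $X$, and you kept only one of them.

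The missing idea is that the seven variables of $Q'$ must themselves be split into two groups, each supporting its own mean-square estimate of rank at least $2$. Auxiliary linear variables absorb the cross terms between the groups. This is what the paper does. By Lemma~\ref{diagonal_submatrix_of_rank_lemma}, after permuting $x_1,\dots,x_7$ one may assume $\rank(Q_{1:5;\,1:5})\geqslant 4$. Setting $v_6=2\sum_{i\leqslant 5}q_{i6}x_i+q_{66}x_6$ and $v_7=2\sum_{i\leqslant 6}q_{i7}x_i+q_{77}x_7$ gives $Q'=Q_{1:5;\,1:5}(x_1,\dots,x_5)+x_6v_6+x_7v_7$. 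Lemma~\ref{the_main_tool_lemma} is then applied with $A=(x_6,x_7,v_6,v_7)$, $B=(x_1,\dots,x_5)$ and $C=(x_8)$. The real work is the linear algebra. Condition (3B) uses an invertible $2\times 2$ minor of $Q_{1:5;\,6:7}$; when $\rank(Q_{1:5;\,6:7})\leqslant 1$, fewer auxiliary variables are needed. Condition (4B) is exactly where $\rank(Q_{1:5;\,1:5})\geqslant 4$ enters, since without it the constrained $B$-form could drop to rank at most $1$. Your observations that $q_{88}\neq 0$, that $\rank(Q')=7$, and that $T_1\ll XL^{-K}$ by Lemma~\ref{first_bounding_lemma} are correct and play the same role in that argument.
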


\begin{proof}
    By Lemma \ref{diagonal_submatrix_of_rank_lemma}, we may assume $\rank(Q_{1:5; \ 1:5}) \geqslant 4$.

    Let us note that if $Q_{1:5; \ 6:7}=0$, the conditions of Lemma \ref{the_main_tool_lemma} hold trivially for $A=(x_1, \dots, x_5), \ B=(x_6, x_7), \ C=(x_8)$, so we may assume $\rank(Q_{1:5; \ 6:7}) \geqslant 1$.

    We also note that if $\rank(Q_{1:5; \ 6:7})=1$ (without loss of generality $Q_{1:5; \ 6} \neq 0$), we can introduce a new variable $v=2\sum_{i=1}^5 q_{i6}$ and divide the variables as: $A=(x_1, \dots, x_5), \ B=(x_6, x_7, v), \ C=(x_8)$. Checking that the conditions of Lemma \ref{the_main_tool_lemma} hold is analogous to how it is done below in the case $\rank(Q_{1:5; \ 6:7})=2$ but simpler, so will not be written down in detail here. Instead, let us pass to the case $\rank(Q_{1:5; \ 6:7})=2.$
    
    Let
    \begin{equation}
        v_6 = 2\sum_{i=1}^5 q_{i6} x_i + q_{66}x_6, \ \ v_7=2\sum_{i=1}^6 q_{i7} x_i + q_{77}x_7^2.
    \end{equation}
    We may assume $\begin{pmatrix}
        q_{46} & q_{56}\\ q_{47} & q_{57}
    \end{pmatrix}$ is invertible.

    We need to bound by $O(X^6L^{-K})$ the following:
    \begin{multline}
        \sum_{\substack{x \in [X]^8, \\ v_6, v_7 \ll X}} \int_{\substack{\alpha \in \mathfrak{m}}} \Lambda(x) \ e\Big( \alpha \big( Q_{1:5; \ 1:5}(x_1, \dots, x_5) + x_6v_6+x_7v_7+q_{88}x_8^2- N \big) \Big) d\alpha \\ \cdot \int_{\substack{\beta, \gamma \in [0,1]}} e \bigg( \beta\Big(v_6 - 2\sum_{i=1}^5 q_{i6} x_i - q_{66}x_6\Big) + \gamma\Big(v_7 - \sum_{i=1}^6 q_{i7} x_i - q_{77}x_7^2\Big) \bigg) d\beta d\gamma.
    \end{multline}

    Let us divide the variables as follows: $$A = (x_6, x_7, v_6, v_7 ), \ B = (x_1, x_2, x_3, x_4, x_5 ), \ C = (x_8).$$
    Let us now verify that the conditions of Lemma \ref{the_main_tool_lemma} are satisfied.

    $(1):$ This condition is satisfied with $$P_A = \begin{pmatrix}
        0 & 0 & 1/2 & 0\\
        0 & 0 & 0 & 1/2\\
        1/2 & 0 & 0 & 0\\
        0 & 1/2 & 0 & 0
    \end{pmatrix}, \ P_B = Q_{1:5; \ 1:5}, \ c=q_{88}.$$
    $(2):$ Since $Q$ is invertible, $q_{88} \neq 0$.

    $(3A):$ We have $\mathcal{A} = \begin{pmatrix}
        q_{66} & 0 & -1 & 0\\
        2q_{67} & q_{77} & 0 & -1\\
    \end{pmatrix}$, so $\mathcal{A}_1=\begin{pmatrix}
        q_{66} & 0\\
        2q_{67} & q_{77}\\
    \end{pmatrix}$ and $\mathcal{A}_2= \begin{pmatrix}
        -1 & 0\\ 0 & -1
    \end{pmatrix}$. Hence, $\mathcal{A}_2$ is clearly invertible.

    $(3B):$ We have $\mathcal{B} = 2\begin{pmatrix}
        q_{16} & q_{26} & q_{36} & q_{46} & q_{56}\\
        q_{17} & q_{27} & q_{37} & q_{47} & q_{57}\\
    \end{pmatrix}$, so $\mathcal{B}_1=2\begin{pmatrix}
        q_{16} & q_{26} & q_{36}\\
        q_{17} & q_{27} & q_{37}\\
    \end{pmatrix}$, $\mathcal{B}_2=2\begin{pmatrix}
        q_{46} & q_{56}\\ q_{47} & q_{57}
    \end{pmatrix}$ and $\mathcal{B}_2$ is invertible from the assumptions.

    $(4A):$ We need the rank of $\begin{pmatrix}
        0 & 0 & 1/2 & 0\\
        0 & 0 & 0 & 1/2\\
        1/2 & 0 & 0 & 0\\
        0 & 1/2 & 0 & 0
    \end{pmatrix} \cdot \begin{pmatrix}
        \begin{matrix}
            1 & 0\\
            0 & 1\\
        \end{matrix}\\
        -\mathcal{A}_2^{-1} \cdot \mathcal{A}_1
    \end{pmatrix}$ ti be at least $2$, which is clearly the case.

    $(4B): $ We need to show that the rank of
    \begin{equation*}
        Q_{1:5; \ 1:5} \cdot  \begin{pmatrix}
        1 & 0 & 0\\
        0 & 1 & 0\\
        0 & 0 & 1\\
        a_{41} & a_{42} & a_{43}\\
        a_{51} & a_{52} & a_{53}
    \end{pmatrix}
    \end{equation*}
    is at least $2$, where
    $\begin{pmatrix}
        a_{41} & a_{42} & a_{43}\\
        a_{51} & a_{52} & a_{53}
    \end{pmatrix} = - \begin{pmatrix}
        q_{46} & q_{56}\\ q_{47} & q_{57}
    \end{pmatrix}^{-1} \cdot \begin{pmatrix}
        q_{16} & q_{26} & q_{36}\\ q_{17} & q_{27} & q_{37}
    \end{pmatrix}$. We note that if the rank of the given product is at most $1$, then we have (for $u_i=\begin{pmatrix}
        q_{1i} & q_{2i} & q_{3i} & q_{4i} & q_{5i}
    \end{pmatrix}^T$, the $i^{\text{th}}$ column $Q_{1:5; \ 1:5}$): $u_i+a_{4i}u_4+a_{5i}u_5=c_iv$ for $1 \leqslant i \leqslant 3$, a constant $c_i$ and some column vector $v$ of length $5$. Hence, $\rank \big(Q_{1:5; \ 1:5}\big) = \rank \big( u_1, \dots, u_5 \big) \leqslant 3$, contrary to the assumption that $\rank (Q_{1:5; \ 1:5}) \geqslant 4$.
\end{proof}

\begin{lemma}
\label{three_columns_rank_1}
    If $Q$ is of the form:
    $\begin{pmatrix}
        \xi\xi^T + D & \xi \gamma^T\\
        \gamma\xi^T & E
    \end{pmatrix}$
    for $\xi \in M_{3 \times 1}(\q), \ \gamma \in M_{5 \times 1}(\q), \ D \in M_{3 \times 3}(\q)$ diagonal and $E \in M_{5 \times 5}(\z)$, then it satisfies Theorem \ref{the_main_theorem}.
\end{lemma}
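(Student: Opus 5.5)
The plan is to complete a square so that the whole rank-one coupling between the first three and the last five variables becomes the square of a single auxiliary variable. That variable will serve as the tuple $C$ in Lemma \ref{the_main_tool_lemma}. Write $y=(x_1,x_2,x_3)^T$ and $z=(x_4,\dots,x_8)^T$. Then
$$x^TQx = y^TDy + (\xi^Ty)^2 + 2(\xi^Ty)(\gamma^Tz) + z^TEz = y^TDy + (\xi^Ty+\gamma^Tz)^2 + z^TE'z,\qquad E' := E-\gamma\gamma^T .$$

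\textbf{Degenerate case.} If $\xi=0$ or $\gamma=0$, then $Q$ is block diagonal. It is then handled directly, by the remark after the proof of Lemma \ref{the_main_tool_lemma} or by Lemma \ref{zero_column_lemma}.

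\textbf{Main case: $\xi,\gamma\neq 0$.} Introduce one auxiliary variable $u=\xi^Ty+\gamma^Tz$, so $m=1$. This is a linear form with rational coefficients, so after clearing denominators we may take it integral, which only rescales the coefficient $d$ by a nonzero constant. We apply Lemma \ref{the_main_tool_lemma} with
$$A=(y_1,y_2,y_3),\qquad B=(z_1,\dots,z_5),\qquad C=(u),$$
and $P_A=D$, $P_B=E'$, $d=1$ (up to scaling). Conditions (1) and (2) are then immediate.

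The constraint row restricted to $A$ is $\xi^T$, and restricted to $B$ it is $\gamma^T$. For (3A) and (3B) it therefore suffices to order $A$ so that its last entry is some $y_k$ with $\xi_k\neq0$, and $B$ so that its last entry is some $z_l$ with $\gamma_l\neq 0$.

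\textbf{Rank bounds from non-degeneracy.} Conditions (4A) and (4B) ask that $D$ (respectively $E'$), composed with a full-column-rank parametrisation of the hyperplane $\xi^Ty=0$ (respectively $\gamma^Tz=0$), have rank at least $2$. The key input is that $x^TQx$ is a sum of three quadratic forms of ranks $\rank D$, $1$ and $\rank E'$, so
$$8=\rank Q\le \rank D+1+\rank E' .$$
Since $\rank D\le 3$ and $\rank E'\le 5$, this gives $\rank E'\ge 4$ and $\rank D\ge 2$.

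For (4B), restricting the columns of $E'$ to a $4$-dimensional subspace lowers the rank by at most one. Hence the product has rank at least $3\ge 2$.

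\textbf{Main obstacle: (4A).} Here the same argument only gives rank at least $\rank D-1$, which could be $1$. The plan is to use the freedom in choosing which $y_k$ is eliminated.
\begin{itemize}
\item If $\rank D=3$, then $D$ is invertible, and the product of $D$ with a full-rank $3\times2$ matrix has rank $2$ for any choice of $k$.
\item If $\rank D=2$, say $d_3=0$ with $d_1,d_2\ne0$, we first show $\xi_3\neq0$. Otherwise $x_3$ would not appear in $Q$ at all: the $(3,3)$ entry is $\xi_3^2+d_3=0$ and all off-diagonal entries in row $3$ are multiples of $\xi_3$. That contradicts non-degeneracy. So we may eliminate $y_3$ (take $k=3$). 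The parametrisation matrix is then $\begin{pmatrix}I_2\\ \ast\end{pmatrix}$, and multiplying by $D$ gives rows $(d_1,0)$, $(0,d_2)$, $(0,0)$, which has rank $2$.
\end{itemize}

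With all four conditions verified, Lemma \ref{the_main_tool_lemma} gives the minor-arc bound. Combined with Proposition \ref{zhao_major}, this proves Theorem \ref{the_main_theorem} for such $Q$.
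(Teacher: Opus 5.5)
Your main case ($\xi\neq0$ and $\gamma\neq0$) is correct, but your reduction of the degenerate case $\gamma=0$ does not go through.

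\textbf{The gap.} When $\gamma=0$ we have $Q=(\xi\xi^T+D)\oplus E$, with blocks of sizes $3$ and $5$. Both the remark after Lemma \ref{the_main_tool_lemma} and Lemma \ref{zero_column_lemma} need a completely isolated variable $x_i$, i.e.\ row $i$ of $Q$ vanishing off the diagonal. For $\xi=0$ this is fine: every $d_i\neq0$ and $x_1$ is isolated. For $\gamma=0$ it happens only if some $\xi_i=0$ or some row of $E$ vanishes off the diagonal. For example, take $\xi=(1,1,1)^T$, $D=I_3$ and
\[
E=\begin{pmatrix}0&1&1&0&0\\1&1&0&0&0\\1&0&2&0&0\\0&0&0&0&1\\0&0&0&1&0\end{pmatrix}.
\]
Then:
\begin{itemize}
\item $\det Q=4\cdot 3\neq0$;
\item no variable is isolated;
\item no five indices have pairwise vanishing off-diagonal entries, so Lemma \ref{five_five_zero_lemma} is unavailable too;
\item $Q_{1,4,7;\ 2,5,8}=I_3$, so $Q$ has off-diagonal rank $3$ and is a genuine instance of the statement.
\end{itemize}
Your main-case split cannot absorb this case either. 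With $\gamma=0$ the constraint $u=\xi^Ty$ does not involve $B$ at all, so $\mathcal{B}=\gamma^T=0$ and (3B) fails.

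\textbf{How to close it.} You need a separate argument for $\gamma=0$. The paper's own set-up never uses $\gamma\neq0$ and works here:
\begin{itemize}
\item permute so that $d_1,d_2\neq0$;
\item dispose of $\xi_i=0$ by Lemma \ref{zero_column_lemma} and of diagonal $E$ by Lemma \ref{five_five_zero_lemma};
\item arrange $q_{78}\neq0$;
\item introduce $u=\xi^Ty$ and $v=2\sum_{i=4}^{7}q_{i8}x_i+q_{88}x_8$;
\item take $A=(x_2,x_8,x_3,v)$, $B=(x_4,\dots,x_7,u)$ and $C=(x_1)$.
\end{itemize}
Because $u$ and $v$ sit on opposite sides, each constraint meets both $A$ and $B$, which is exactly what (3A) and (3B) need. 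Alternatively, you could observe that $\gamma\neq0$ in both places the paper invokes the lemma (cases $2.1^\circ$ and $2.2.2^\circ$ of Section \ref{section_twos}). But then you are proving a weaker statement than the one given.

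\textbf{Comparison with the paper.} With this patch, your argument is a genuinely different and cleaner route. You complete the square so that the whole coupling becomes $u^2$, and separate the auxiliary variable $u=\xi^Ty+\gamma^Tz$ as $C$. This uses one auxiliary variable instead of two and needs none of the paper's preliminary reductions. It also reduces (4A) and (4B) to the single count $8\le\rank D+1+\rank(E-\gamma\gamma^T)$, plus the choice of which $y_k$ to eliminate. The paper instead separates the prime variable $x_1$ and must verify (4B) by a more delicate rank argument. Separating an auxiliary variable makes $T_1$ an unweighted Weyl sum, exactly as in Propositions \ref{proposition_3} and \ref{proposition_2}. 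The price of your approach is that (3B) requires $\gamma\neq0$, which is precisely where it breaks.
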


\begin{remark}
    We note that the condition for the form of $Q$ is satisfied (for $Q$ symmetric) if and only if the first three rows of $Q$, after adjusting their diagonal entries, have rank at most $1$.
\end{remark}

\begin{proof}
    Let $\xi=\begin{pmatrix}a_1 & a_2 & a_3\end{pmatrix}^T$, $\gamma=\begin{pmatrix}b_1 & b_2 & b_3 & b_4 & b_5\end{pmatrix}^T$ and let us denote the diagonal entries of $D$ by: $d_1, d_2, d_3$. 

    Since $Q$ is invertible, at least two of the $d_i$s must be nonzero. Let $d_1, d_2 \neq 0$. We also note that if $a_i=0$ for some $1 \leqslant i \leqslant 3$, we can apply Lemma \ref{zero_column_lemma}, so we may assume $a_1, a_2, a_3 \neq 0.$

    Finally, let us notice that if $E$ is diagonal, we can apply Lemma \ref{five_five_zero_lemma}, so we may assume it has a nonzero diagonal entry. Let $q_{78} \neq 0$.

    For simplicity, let:
    \begin{equation}
        y_i=x_i \ \text{for} \ 1 \leqslant i \leqslant 3, \ \ z_i=x_{i} \ \text{for} \ 4 \leqslant i \leqslant 7, \ \ t=x_8, \ \ v=2\sum_{i=1}^7 q_{i8}x_i+q_{88}x_8, \ \ u=\sum_{i=1}^3a_ix_i.
    \end{equation}
    We need to bound by $O(X^6L^{-K})$ the following:
    \begin{multline}
        \sum_{\substack{y \in [X]^3, z \in [X]^4,\\ t \in [X], \ u, v \ll X}} \Lambda(x) \int_{\alpha \in \mathfrak{m}} e \bigg( \alpha \Big( z^T Q_{4:7; 4:7} z + \sum_{i=1}^3 d_iy_i^2 + u^2 + 2u\sum_{i=4}^7 b_iz_i + vt \Big) \bigg) \cdot \\
        \cdot \int_{\beta \in [0, 1]} e \bigg( \beta \Big( u - \sum_{i=1}^3a_iy_i \Big) \bigg) \int_{\gamma \in [0, 1]} e \bigg( \gamma \Big( v - 2\sum_{i=1}^3 q_{i8}y_i - 2\sum_{i=4}^7 q_{i8}z_i+q_{88}t \Big) \bigg).
    \end{multline}

    Now, let: $$A=(y_2, t, y_3, v), \ B=(z_1, z_2, z_3, z_4, u), C=(y_1).$$ We will check that this grouping of variables satisfies the conditions of Lemma \ref{the_main_tool_lemma}.

    $(1)$: It is satisfied, with $P_A=\begin{pmatrix}
        d_2 & 0 & 0 & 0\\
        0 & 0 & 0 & 1/2\\
        0 & 0 & d_3 & 0\\
        0 & 1/2 & 0 & 0\\
    \end{pmatrix}$, $P_B=\begin{pmatrix}
        e_{11} & e_{12} & e_{13} & e_{14} & b_1\\
        e_{21} & e_{22} & e_{23} & e_{24} & b_2\\
        e_{31} & e_{32} & e_{33} & e_{34} & b_3\\
        e_{41} & e_{42} & e_{43} & e_{44} & b_4\\
        b_1 & b_2 & b_3 & b_4 & 1
    \end{pmatrix}$ and $c=d_1$.

    $(2)$: It is satisfied, since $d_1 \neq 0$.

    $(3A)$: We have $\mathcal{A}=\begin{pmatrix}
        a_2 & 0 & a_3 & 0\\
        2b_5a_2 & q_{88} & 2b_5a_3 & -1\\
    \end{pmatrix}$, so $\mathcal{A}_2=\begin{pmatrix}
        a_3 & 0\\
        2b_5a_3 & -1\\
    \end{pmatrix}$, which is invertible since (as we assumed) $a_3 \neq 0$.

    $(3B)$: We have: $\mathcal{B}=\begin{pmatrix}
        0 & 0 & 0 & 0 & -1\\
        2q_{48} & 2q_{58} & 2q_{68} & 2q_{78} & 0\\
    \end{pmatrix}$, so $\mathcal{B}_2=\begin{pmatrix}
        0 & -1\\
        2q_{78} & 0\\
    \end{pmatrix}$, which is invertible since $q_{78} \neq 0$.

    $(4A)$: We need to show that the rank of the following is at least $2$:
    \begin{equation*}
        \begin{pmatrix}
        d_2 & 0 & 0 & 0\\
        0 & 0 & 0 & 1/2\\
        0 & 0 & d_3 & 0\\
        0 & 1/2 & 0 & 0\\
    \end{pmatrix} \cdot \begin{pmatrix}
        \begin{matrix}
            1 & 0\\
            0 & 1\\
        \end{matrix}\\
        -\mathcal{A}_2^{-1} \cdot \mathcal{A}_1
    \end{pmatrix},
    \end{equation*}
    which is clear, as the first row of the product is $\begin{pmatrix}
        d_2 & 0
    \end{pmatrix}$ and the last row is $\begin{pmatrix}0 & 1/2\end{pmatrix}$.

    $(4B)$: We have:
    \begin{equation*}
        -\mathcal{B}_2^{-1} \cdot \mathcal{B}_1 =\begin{pmatrix}
        0 & -\frac{1}{2q_{78}}\\
        1 & 0\\
    \end{pmatrix} \cdot \begin{pmatrix}
        0 & 0 & 0\\
        2q_{48} & 2q_{58} & 2q_{68}\\
    \end{pmatrix}=\begin{pmatrix}
        -\frac{q_{48}}{q_{78}} & -\frac{q_{58}}{q_{78}} & -\frac{q_{68}}{q_{78}}\\
        0 & 0 & 0
    \end{pmatrix}.
    \end{equation*}
    Hence, we need to show that the rank of the following is at least $2$: 
    \begin{equation*}
        \begin{pmatrix}
        e_{11} & e_{12} & e_{13} & e_{14} & b_1\\
        e_{21} & e_{22} & e_{23} & e_{24} & b_2\\
        e_{31} & e_{32} & e_{33} & e_{34} & b_3\\
        e_{41} & e_{42} & e_{43} & e_{44} & b_4\\
        b_1 & b_2 & b_3 & b_4 & 1
    \end{pmatrix} \cdot \begin{pmatrix}
        1 & 0 & 0\\
        0 & 1 & 0\\
        0 & 0 & 1\\
        -\frac{q_{48}}{q_{78}} & -\frac{q_{58}}{q_{78}} & -\frac{q_{68}}{q_{78}}\\
        0 & 0 & 0
    \end{pmatrix}.
    \end{equation*}
    If that product had rank at most $1$, then in particular (denoting by $v_i := Q_{3:7; i}$) we would have $\rank\big( \langle v_1-\frac{q_{48}}{q_{78}}v_4, \ v_2-\frac{q_{58}}{q_{78}}v_4, \ v_3-\frac{q_{68}}{q_{78}}v_4 \rangle \big) \leqslant 1$, giving us two linear relations on these vectors, so we would have $\rank(Q_{3:7; \ 4:7}) \leqslant 2$. However, $8=\rank(Q) \leqslant \rank(Q_{1:3; [8]})+\rank(Q_{8; [8]})+\rank(Q_{8; 4:7})+\rank(Q_{1:7; 4:7})$. Hence, $\rank(Q_{3:7; 4:7})=\rank(Q_{1:7; 4:7}) \geqslant 8 - 3 - 1 - 1 = 3$. Hence, the rank of the product must be at least $2$ and so the condition $(4B)$ is also satisfied.
\end{proof}

\section{The possible structures of \texorpdfstring{$Q$}{Q}}
\label{section_structure}

\begin{definition}
    Let $M$ be an off-diagonal, invertible $3 \times 3$ submatrix of $Q$, i.e. for some $i_1, i_2, i_3, j_1, j_2, j_3$ pairwise different we have
    \begin{equation*}
        M=\begin{pmatrix}
        q_{i_1j_1} & q_{i_1j_2} & q_{i_1j_3}\\
        q_{i_2j_1} & q_{i_2j_2} & q_{i_2j_3}\\
        q_{i_3j_1} & q_{i_3j_2} & q_{i_3j_3}\\
    \end{pmatrix}.
    \end{equation*}
    Let $\{k,l\}=[8] \setminus \{i_1, i_2, i_3, j_1, j_2, j_3\}$, $$H_M:=\begin{pmatrix}
        q_{i_1l} & q_{i_1k} & q_{i_1j_1} & q_{i_1j_2} & q_{i_1j_3}\\
        q_{i_2l} & q_{i_2k} & q_{i_2j_1} & q_{i_2j_2} & q_{i_2j_3}\\
        q_{i_3l} & q_{i_3k} & q_{i_3j_1} & q_{i_3j_2} & q_{i_3j_3}\\
    \end{pmatrix}$$ and $H_{M, k}$ be $H_M$ without the column indexed by $j_k$ for $1 \leqslant k \leqslant 3$. We note that since $M$ is invertible, we have $2 \leqslant \rank(H_{M, 1}), \rank(H_{M, 2}), \rank(H_{M, 3}) \leqslant 3$. If exactly $a$ of $\rank(H_{M, 1}), \rank(H_{M, 2}),$ $\rank(H_{M, 3})$ equal $3$, we say that $M$ is `horizontally of type $a$'.

    We note that we can analogously define the `vertical type of $M$', \\ taking $V_M:=\begin{pmatrix}
        q_{i_1j_1} & q_{i_2j_1} & q_{i_3j_1} & q_{kj_1} & q_{lj_1}\\
        q_{i_1j_2} & q_{i_2j_2} & q_{i_3j_2} & q_{kj_2} & q_{lj_2}\\
        q_{i_1j_3} & q_{i_2j_3} & q_{i_3j_3} & q_{kj_3} & q_{lj_3}\\
    \end{pmatrix}^T$ instead of $H_M$.
\end{definition}

Since the off-diagonal rank of $Q$ is $3$, it has a $3 \times 3$ invertible submatrix that does not contain its diagonal entries. Without loss of generality it is the top-right $3 \times 3$ submatrix $M=Q_{1:3; \ 6:8}$.
We may assume $\rank(H_{M, 8}) \leqslant \rank(H_{M, 7}) \leqslant \rank(H_{M, 6})$ (swapping variables $6,7,8$ as needed) and that $\rank(V_{M, 1}) \leqslant \rank(V_{M, 2}) \leqslant \rank(V_{M, 3})$ (swapping variables $1,2,3$).

The analysis in this section is based on the results in \cite{nine}, but we use a bit different notation and, in particular, assume that $\rank(H_{M,8}) \leqslant \rank(H_{M,7}) \leqslant \rank(H_{M,6})$ while Zhao assumes that $\rank(H_{M, 8}) \geqslant \rank(H_{M, 7}) \geqslant \rank(H_{M, 6})$.

Zhao showed the following four lemmas (\cite[5.13, 5.11, 5.10, 5.9]{nine}):

\begin{lemma}
\label{zero_twos_lemma}
    If $M$ is of horizontal type $3$, then $Q$ has the form: $$\begin{pmatrix}
        E & UC\\ C^TU^T & D+C^THC
    \end{pmatrix}$$
    for $E \in M_{3 \times 3}(\z)$, $H \in M_{3 \times 3}(\q)$, $U \in GL_3(\q)$, $C \in M_{3 \times 5}(\z)$ with $\rank(C)=3$ and $D \in M_{5 \times 5}(\q)$ diagonal.
\end{lemma}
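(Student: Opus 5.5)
The plan is to read the required shape directly off the vanishing of the off-diagonal $4\times 4$ minors of $Q$. With the normalisation $M=Q_{1:3;\,6:8}$, the block $H_M$ is precisely $R:=Q_{1:3;\,4:8}$, a $3\times 5$ matrix of rank $3$. I would take $U$ to be $M$ divided by a suitable integer and put $C:=U^{-1}R$. Then $UC=R$, $\rank(C)=3$, and the scaling can be chosen so that $C$ is integral (any rescaling of $C$ can be absorbed into $U$ and $H$). The top-left block $E=Q_{1:3;\,1:3}$ is unconstrained, and the top-right block already has the form $UC$. So everything reduces to showing that the off-diagonal part of $Q_{4:8;\,4:8}$ agrees with that of $C^THC$ for a symmetric $H\in M_{3\times3}(\q)$. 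The diagonal discrepancy is then absorbed into $D$.

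\textbf{Step 1 (row functionals).} Write $R_b$ for the column of $R$ indexed by $b\in\{4,\dots,8\}$. For distinct $a,b\in\{4,\dots,8\}$ and a triple $S\subset\{4,\dots,8\}\setminus\{a,b\}$ with $R_S$ invertible, the rows $\{1,2,3,a\}$ and columns $S\cup\{b\}$ form an off-diagonal $4\times4$ submatrix. It is therefore singular, because the off-diagonal rank is $3$. The Schur complement then gives
\[
q_{ab}=Q_{a;\,S}\,R_S^{-1}R_b .
\]
Horizontal type $3$ says exactly that $R_{\{4,5,6,7,8\}\setminus\{a,b\}}$ is invertible whenever $\{a,b\}$ contains $4$ or $5$. (Indeed $\{4,5\}$ gives $M$, and $\{4\text{ or }5,\,j\}$ gives some $H_{M,k}$.) I would like to define $h_a:=(R_S^{-1})^TQ_{a;S}^T$, so that $q_{ab}=h_a^TR_b$. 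The point to check is that $h_a$ is independent of the admissible $S$ and of $b$. For that I would use that the entries $Q_{a;S}$ are themselves given by the same formula relative to another admissible triple, and compare the two expressions.

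\textbf{Step 2 (the awkward pairs).} The main obstacle is the pairs $a,b\in\{6,7,8\}$. For these the only triple avoiding $a,b$ is $\{4,5,c\}$, and $R_{\{4,5,c\}}$ need not be invertible. For such pairs I would instead exploit the symmetry $q_{ab}=q_{ba}$ and pass through indices $4,5$. Using Step 1 for the pairs $(a,4),(a,5),(4,b),(5,b)$, I would expand a suitable singular $4\times4$ minor with rows $\{1,2,3,a\}$ and columns drawn from $\{4,5\}\cup\{b\}$ together with one further column. The aim is to express $q_{ab}$ through the already determined $h_a$ and conclude $q_{ab}=h_a^TR_b$ here too. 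If $R_{\{4,5,c\}}$ is singular, then $R_4,R_5,R_c$ are dependent, while $R_6,R_7,R_8$ are independent. I would use that dependency to reduce to minors containing $M$, which is invertible.

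\textbf{Step 3 (symmetry gives $H$).} Once $q_{ab}=h_a^TR_b$ holds for all $a\ne b$, symmetry gives $h_a^TR_b=h_b^TR_a$ for all $a\neq b$. Since $R_6,R_7,R_8$ form a basis, I define $H$ by prescribing $h_a = U^{-T}H U^{-1}R_a$-type relations on this basis. Concretely, I take $G$ to be the unique matrix with $G R_j=h_j$ for $j=6,7,8$ and show that $G$ is symmetric using the relations among $6,7,8$. I then show $h_4=GR_4$ and $h_5=GR_5$ by using the relations of index $4$ (resp. $5$) with $6,7,8$ together with the general-position information from horizontal type $3$. Setting $H:=U^TGU$ gives $q_{ab}=(C^THC)_{ab}$ for $a\ne b$. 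Finally I define $D$ as the diagonal correction, which completes the form.
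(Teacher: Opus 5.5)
The paper does not prove this lemma; it quotes it from Zhao \cite[5.13]{nine}, so your argument is judged on its own. The strategy (row functionals $h_a$ from singular off-diagonal $4\times4$ minors, then symmetry to build $H$) is the right one. As written, however, there is a gap, and it comes from misreading the hypothesis.

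$H_{M,k}$ is $H_M$ with only the column $j_k$ removed. So it is the $3\times4$ block $R_{T_a}$ with $a=j_k\in\{6,7,8\}$, where $T_a:=\{4,\dots,8\}\setminus\{a\}$. Horizontal type $3$ says $\rank(R_{T_a})=3$ for $a=6,7,8$. It does not say that every triple $\{4\text{ or }5\}\cup\{c,c'\}$ is invertible. For example, take $R_6,R_7,R_8=e_1,e_2,e_3$ (standard basis), $R_4=e_1+e_2$, $R_5=e_1+e_2+e_3$. This $R$ has type $3$, yet $R_{\{4,6,7\}}$ is singular, so your Step 1 has no admissible triple for the pair $\{5,8\}$. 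Moreover, Step 2, which you call the main obstacle, is only stated as an intention ("expand a suitable minor", "use that dependency to reduce to minors containing $M$"). It is never carried out, so $q_{ab}=h_a^TR_b$ is not established for $a,b\in\{6,7,8\}$.

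The obstacle disappears once you notice that, for fixed $a$, every admissible pair $(S,b)$ satisfies $S\cup\{b\}=T_a$. So only one minor matters: $Q_{\{1,2,3,a\};\,T_a}$, which is off-diagonal and therefore singular. The block $R_{T_a}$ has rank $3$: for $a=4,5$ because $T_a\supset\{6,7,8\}$, and for $a=6,7,8$ by type $3$. Hence the last row of that minor lies in the row space of the first three, i.e. $Q_{a;\,T_a}=h_a^TR_{T_a}$ for a unique $h_a\in\q^3$. This gives $q_{ab}=h_a^TR_b$ for \emph{all} $b\neq a$ at once, including pairs inside $\{6,7,8\}$. The columns of $S$ are matched by the very definition of $h_a$, not only the extra column $b$, and independence from the choice of $S$ and $b$ is automatic.

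With this, your Step 3 works verbatim and needs no general-position input. For $b=6,7,8$ you have $h_4^TR_b=h_b^TR_4=(GR_4)^TR_b$, and since $M$ is invertible this gives $h_4=GR_4$; likewise $h_5=GR_5$. You can even take $U=I$, $C=R$, $H=G$.
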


Theorem \ref{the_main_theorem} for this case is proven in Section \ref{section_three_three}.

\begin{lemma}
\label{one_two_lemma}
    If $M$ is of horizontal type $2$, then $Q$ has the form:
    $$\begin{pmatrix}
        E & UC & \gamma\\
        C^TU^T & D+C^THC & \xi \\
        \gamma^T & \xi^T & f
    \end{pmatrix}$$
    for $E \in M_{3 \times 3}(\z)$, $\gamma \in M_{3 \times 1}(\z), \ \xi \in M_{4 \times 1}(\z)$, $f \in \mathbb{Z}$, $U \in M_{3 \times 2}(\q)$, $C \in M_{2 \times 4}(\z)$ with $\rank(C)=2$, $H \in M_{2 \times 2}(\q)$ and $D \in M_{4 \times 4}(\q)$ diagonal. Moreover, $\begin{pmatrix}
        U & \gamma
    \end{pmatrix} \in GL_3(\q)$.
\end{lemma}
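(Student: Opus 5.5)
We want to prove Lemma \ref{one_two_lemma}: for $M=Q_{1:3;\,6:8}$ of horizontal type $2$, $Q$ has the stated block form. The approach is to use off-diagonal rank $3$ to extract linear relations among the columns $4,\dots,8$ restricted to rows $1:3$, and then use off-diagonal rank again on the lower-right block to obtain the $D+C^THC$ structure, mirroring Lemma \ref{zero_twos_lemma}.

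\textbf{Step 1: the column space of rows $1:3$.} With the ordering $\rank(H_{M,8})\le\rank(H_{M,7})\le\rank(H_{M,6})$, horizontal type $2$ means $\rank(H_{M,8})=2$ while $\rank(H_{M,6})=\rank(H_{M,7})=3$. Here $H_{M,8}$ is the $3\times 4$ block $Q_{1:3;\,\{4,5,6,7\}}$.
\begin{itemize}
\item Since $M$ is invertible, columns $6,7$ of $Q_{1:3;\,\cdot}$ are independent, so the column space $W$ of $Q_{1:3;\,4:7}$ is $2$-dimensional.
\item Choose $U\in M_{3\times2}(\q)$ whose columns span $W$, and write $Q_{1:3;\,4:7}=UC$ with $\rank(C)=2$. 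Scaling $U$ lets us take $C$ integral.
\item Column $8$ is not in $W$, again because $M$ is invertible. Setting $\gamma=Q_{1:3;\,8}$ gives $(U\ \gamma)\in GL_3(\q)$.
\end{itemize}
This yields the top row of blocks: $E=Q_{1:3;1:3}$, the block $UC$, and $\gamma$. It also gives $\xi$ and $f$ trivially as the remaining entries of the last column.

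\textbf{Step 2: the block $Q_{4:7;\,4:7}$.} The content is that its off-diagonal part equals the off-diagonal part of $C^THC$ for some $H\in M_{2\times2}(\q)$. Fix distinct $i,j\in\{4,\dots,7\}$. The goal is an off-diagonal $4\times4$ minor that contains $q_{ij}$ and rows $1:3$ and/or column $8$; its vanishing, which is forced by off-diagonal rank $3$, should express $q_{ij}$ linearly in terms of $C$. The natural candidates are:
\begin{itemize}
\item rows $\{1,2,3,i\}$ against columns $\{j,k,8,\ell\}$, for suitable $k,\ell\in\{4,\dots,7\}\setminus\{i,j\}$;
\item rows $\{1,2,3,i\}$ against columns $\{j,k,l,8\}$.
\end{itemize}
The entries here use only non-diagonal positions, so off-diagonal rank $3$ applies. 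Expanding along row $i$, and using that the top $3\times 4$ part has the form $(U C_{\cdot,\{j,k,l\}}\ \ \gamma)$ with $(U\ \gamma)$ invertible, each vanishing minor says that the vector $(q_{ij},q_{ik},q_{il})$ lies in the row space of $C_{\cdot,\{j,k,l\}}$, after accounting for the $\gamma$ column. The rank hypotheses $\rank H_{M,6}=\rank H_{M,7}=3$ should guarantee that $C_{\cdot,\{j,k,l\}}$ has rank $2$ for the relevant triples, so this conclusion is genuinely informative.

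\textbf{Step 3: assembling $H$.} Collecting these, row $i$ of $Q_{4:7;4:7}$, with its diagonal entry ignored, equals $h_i^TC$ off the diagonal for some $h_i\in\q^2$. Symmetry of $Q$ combined with $\rank C=2$ should then force $h_i=H c_i$ for a single symmetric $H$, where $c_i$ is the $i$-th column of $C$. This is the same bilinear-extension argument Zhao uses for horizontal type $3$ (Lemma \ref{zero_twos_lemma}). Absorbing the diagonal discrepancies into a diagonal $D$ finishes the proof.

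\textbf{Main obstacle.} The hardest part is Step 2: checking that the chosen $4\times4$ off-diagonal minors really avoid every diagonal entry, and that degenerate configurations do not break the argument. The delicate case is when some column of $C$ is zero or several columns of $C$ are proportional. There I would first try replacing the column-$8$ minors by minors that use column $6$ or $7$ of $M$ directly, exploiting $\rank H_{M,6}=\rank H_{M,7}=3$. If that fails, I would fall back on Zhao's case analysis \cite[5.11]{nine}, adapted to our reversed ordering of $H_{M,6},H_{M,7},H_{M,8}$.
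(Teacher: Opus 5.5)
Your proposal is correct. It differs from the paper only in that the paper gives no argument for this lemma: it quotes it from Zhao \cite[5.11]{nine}, with the ordering of $H_{M,6},H_{M,7},H_{M,8}$ reversed relative to Zhao. Your derivation is self-contained, and it closes with less effort than your hedging suggests.

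For each $i\in\{4,\dots,7\}$ there is exactly one off-diagonal $4\times 4$ submatrix with rows $\{1,2,3,i\}$, namely the one with columns $\{4,\dots,8\}\setminus\{i\}$, so your two ``candidates'' coincide. Write $c_4,\dots,c_7$ for the columns of $C$, $C'$ for the $2\times 3$ matrix of the $c_m$ with $m\neq i$, and $r$ for the matching entries of row $i$. Then this submatrix factors as
\[
\begin{pmatrix} UC' & \gamma\\ r^T & q_{i8}\end{pmatrix}=\begin{pmatrix} U & \gamma & 0\\ 0 & 0 & 1\end{pmatrix}\begin{pmatrix} C' & 0\\ 0 & 1\\ r^T & q_{i8}\end{pmatrix},
\]
with the first factor invertible. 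So $\gamma$ and $q_{i8}$ drop out, and singularity means exactly $\det\begin{pmatrix} C'\\ r^T\end{pmatrix}=0$.

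The degenerate cases you worry about cannot occur:
\begin{itemize}
\item For $i=4,5$ the triple contains $c_6,c_7$, which are independent because $M$ is invertible.
\item Since $H_{M,6}=(U\ \gamma)\begin{pmatrix} c_4&c_5&c_7&0\\0&0&0&1\end{pmatrix}$, one has $\rank H_{M,6}=3$ if and only if $\rank(c_4,c_5,c_7)=2$.
\item Likewise $\rank H_{M,7}=3$ if and only if $\rank(c_4,c_5,c_6)=2$.
\end{itemize}
So the type-$2$ hypothesis is precisely what makes every minor informative, and your fallback on Zhao's case analysis is unnecessary.

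Step 3 also takes only two lines. Let $H$ be the matrix with $Hc_6=h_6$ and $Hc_7=h_7$. It is symmetric because $c_6^THc_7=q_{76}=q_{67}=c_7^THc_6$. For $i\in\{4,5\}$, the pairings of $h_i$ with the basis $c_6,c_7$ are $q_{i6}=q_{6i}$ and $q_{i7}=q_{7i}$, which are also the pairings of $Hc_i$. Hence $h_i=Hc_i$, and every off-diagonal entry of $Q_{4:7;\,4:7}$ equals $c_i^THc_j$.

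The citation buys the paper brevity and uniformity with the other three structural lemmas. Your route gives a proof checkable in the paper's own conventions, with no translation of the reversed ordering. It also makes explicit what is used: the minors for $i=6,7$ and one of $i=4,5$, together with the two rank conditions that define type $2$.
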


Theorem \ref{the_main_theorem} for this case is proven in Section \ref{section_three_two}.

\begin{lemma}
\label{two_twos_lemma}
    If $M$ is of horizontal type $1$, then $Q$ has the form:
    $$\begin{pmatrix}
        E & a b^T & C\\
        ba^T & D+hbb^T & G\\
        C^T & G^T & F
    \end{pmatrix}$$
    for $F \in M_{3 \times 3}(\z)$, $G, C \in M_{3 \times 2}(\z)$, $E \in M_{2 \times 2}(\z)$, $D\in M_{2 \times 2}(\q)$ diagonal, $h \in \q$ and $a, b \in M_{3 \times 1}(\q)$ with $\begin{pmatrix}
        b_3 a & C
    \end{pmatrix} \in GL_3(\q)$.
\end{lemma}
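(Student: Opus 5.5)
The plan is to read off the structure from the column relations forced by the type assumption, and then to use the vanishing of all off-diagonal $4\times 4$ minors, since the off-diagonal rank is exactly $3$. Throughout, $M=Q_{1:3;\,6:8}$ and $\{k,l\}=\{4,5\}$. I write $c_j:=Q_{1:3;\,j}$ for the $j$-th column restricted to rows $1,2,3$, so $c_6,c_7,c_8$ are linearly independent. I read the block sizes in the statement as $3,3,2$, corresponding to the index groups $\{1,2,3\}$, $\{4,5,6\}$ and $\{7,8\}$ after a permutation; with this reading $a\in M_{3\times 1}$ and $b\in M_{3\times 1}$ make sense.

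\textbf{Step 1 (the $ab^T$ block).} By the ordering $\rank(H_{M,8})\leqslant\rank(H_{M,7})\leqslant\rank(H_{M,6})$ and horizontal type $1$, we get $\rank(H_{M,6})=3$ and $\rank(H_{M,7})=\rank(H_{M,8})=2$. Since $c_6,c_7$ are independent and $\rank(H_{M,8})=2$, the columns $c_4,c_5$ lie in $\mathrm{span}(c_6,c_7)$. Likewise $\rank(H_{M,7})=2$ puts them in $\mathrm{span}(c_6,c_8)$. The intersection of these two spans is $\mathrm{span}(c_6)$, so $c_4=w_4c_6$ and $c_5=w_5c_6$. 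Hence
\[
Q_{1:3;\,4:6}=a\,b^T, \qquad a=c_6,\quad b=(w_4,w_5,1)^T .
\]
The block $C=Q_{1:3;\,7:8}$ then satisfies $\begin{pmatrix} b_3a & C\end{pmatrix}=\begin{pmatrix}c_6&c_7&c_8\end{pmatrix}=M\in GL_3(\q)$, which is the stated invertibility condition.

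\textbf{Step 2 (the $D+hbb^T$ block).} Every $4\times4$ off-diagonal submatrix of $Q$ is singular. First take rows $\{1,2,3,4\}$ and columns $\{5,6,7,8\}$. Subtracting $w_5$ times the column of index $6$ from the column of index $5$ leaves the top three entries zero, so the determinant is $\pm(q_{45}-w_5q_{46})\det M$. This forces $q_{45}=w_5q_{46}$, and symmetrically $q_{45}=w_4q_{56}$.

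If $w_4\neq0$, then $\det(c_4,c_7,c_8)=w_4\det M\neq0$, so $Q_{1:3;\,\{4,7,8\}}$ is another invertible off-diagonal submatrix. Applying the same elimination to rows $\{1,2,3,6\}$ and columns $\{4,7,8,5\}$ gives $w_4q_{56}=w_5q_{46}$. Setting $h:=q_{46}/w_4$ we obtain $q_{46}=hb_1b_3$, $q_{56}=hb_2b_3$ and $q_{45}=hb_1b_2$. So the off-diagonal part of $Q_{4:6;\,4:6}$ equals that of $hbb^T$, and the diagonal discrepancy is absorbed into a diagonal $D$. The case $w_5\neq0$ is symmetric.

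\textbf{Step 3 (degenerate cases and expected obstacle).} The main obstacle is the degenerate situation $w_4=0$ or $w_5=0$, where the auxiliary invertible minor used in Step 2 is unavailable. For $w_4=w_5=0$, any $h$ with $D$ chosen appropriately works, provided $q_{45}=q_{46}=q_{56}=0$. This vanishing I would try to obtain from minors with rows $\{1,2,4\}\cup\{\cdot\}$ combined with the rank-$3$ hypothesis $\rank(H_{M,6})=3$, which supplies another invertible $3\times3$ off-diagonal block built from $c_4$ or $c_5$ together with $c_7,c_8$.

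If this approach fails, the fallback is to accept a slightly weaker normal form, in which the $\{4,5,6\}$ block is merely $D$ plus a rank-one matrix. Such a $Q$ would instead be handled by Lemma~\ref{zero_column_lemma} or Lemma~\ref{five_five_zero_lemma}, since $c_4=0$ forces many zero entries. Finally, I would check that $E=Q_{1:3;\,1:3}$, $G=Q_{4:6;\,7:8}$ and $F=Q_{7:8;\,7:8}$ are unconstrained, which is consistent with the statement.
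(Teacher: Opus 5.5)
Your Steps 1 and 2 are correct and already contain the whole proof. The paper gives no argument of its own for this lemma; it quotes Zhao's Lemma 5.10. Deriving the structure from the vanishing of the $4\times 4$ off-diagonal minors is a sound, self-contained route. Your reading of the block sizes as $3,3,2$ is also the intended one, as the use of the lemma in Section~\ref{section_twos} confirms.

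The gap is in Step 3, where you leave open a case that does not exist. Step 2 handles $w_4\neq 0$ and, symmetrically, $w_5\neq 0$, so the only remaining case is $w_4=w_5=0$. In that case $c_4=c_5=0$, so $H_{M,6}$ has columns $0,0,c_7,c_8$ and rank $2$. This contradicts $\rank(H_{M,6})=3$, which you derived yourself in Step 1 from horizontal type $1$. It is exactly the remark ``$b_1\neq 0$ or $b_2\neq 0$'' that the paper makes when it applies the lemma in Section~\ref{section_twos}. Your own sentence about $\rank(H_{M,6})=3$ supplying an invertible block built from $c_4$ or $c_5$ is this contradiction, but you did not draw it.

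The substitutes you propose for that case would not work. When $c_4=c_5=0$ you are in the type-$0$ situation of Lemma~\ref{three_twos_lemma}, and there $q_{46},q_{56}$ genuinely need not vanish. For example, take the symmetric $Q$ whose only nonzero entries are $q_{44}=q_{55}=1$ and $q_{16}=q_{27}=q_{38}=q_{46}=1$ together with their transposes. This is non-degenerate, has off-diagonal rank $3$ (the off-diagonal support has no matching of size $4$), has $c_4=c_5=0$, and has $q_{46}\neq 0$. So no argument can force $q_{45}=q_{46}=q_{56}=0$ except by reaching the contradiction above. Your fallback is also not a fix, since it proves a different statement from the lemma. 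Replace Step 3 by the one-line observation that $(w_4,w_5)\neq(0,0)$.

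Finally, the auxiliary minor in Step 2 is redundant. Your first two minors together with $q_{45}=q_{54}$ already give $w_5q_{46}=q_{45}=w_4q_{56}$. Hence $(q_{46},q_{56})=h(w_4,w_5)$ for a unique $h$ once $(w_4,w_5)\neq(0,0)$, and then $q_{45}=hw_4w_5$. This removes the split into $w_4\neq0$ and $w_5\neq0$ altogether.
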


\begin{lemma}
\label{three_twos_lemma}
    If $M$ is of horizontal type $0$, then $Q$ has the form:
    $$\begin{pmatrix}
        E & 0 & U\\
        0 & D & C\\
        U^T & C^T & F
    \end{pmatrix}$$
    for $E, F \in M_{3 \times 3}(\z)$, $U \in GL_3(\z)$, $C \in M_{2 \times 3}(\z)$ and $D \in M_{2 \times 2}(\z)$ diagonal.
\end{lemma}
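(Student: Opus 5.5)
The plan is to read off the two zero blocks directly from the rank conditions defining horizontal type $0$, together with the assumption that the off-diagonal rank of $Q$ is exactly $3$. With the normalisation fixed above ($M=Q_{1:3;\ 6:8}$ invertible), the two remaining indices are $\{k,l\}=\{4,5\}$. Ordering the variables as $(1,2,3),(4,5),(6,7,8)$, the claimed form says three things:
\begin{itemize}
\item $Q_{1:3;\ 4:5}=0$;
\item $q_{45}=0$, so that the block $D=Q_{4:5;\ 4:5}$ is diagonal;
\item $U=M$ is invertible.
\end{itemize}
The blocks $E=Q_{1:3;\ 1:3}$, $C=Q_{4:5;\ 6:8}$ and $F=Q_{6:8;\ 6:8}$ are unconstrained, and $U=M$ is invertible by assumption. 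So only the two vanishing statements need proof.

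First, $Q_{1:3;\ 4:5}=0$. Write $c_4,c_5,c_6,c_7,c_8\in\q^3$ for the columns of $H_M$, that is, for the columns $4,\dots,8$ of $Q$ restricted to rows $1,2,3$. Since $M$ is invertible, $c_6,c_7,c_8$ form a basis of $\q^3$. In particular, any two of them are linearly independent.

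Horizontal type $0$ means that each $H_{M,j}$ has rank exactly $2$. The matrix $H_{M,j}$ consists of $c_4,c_5$ together with the two columns among $c_6,c_7,c_8$ other than $c_j$. Since those two columns already span a $2$-dimensional space, rank $2$ forces $c_4,c_5$ to lie in their span. Hence
\[
c_4,c_5\in \langle c_6,c_7\rangle\cap\langle c_6,c_8\rangle\cap\langle c_7,c_8\rangle .
\]
Because $c_6,c_7,c_8$ is a basis, this intersection is $\{0\}$: a vector in the span of each pair must have zero coordinate on each basis vector. Therefore $c_4=c_5=0$, which is exactly $Q_{1:3;\ 4:5}=0$.

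Second, $q_{45}=0$. Here I use that the off-diagonal rank is at most $3$. Consider the $4\times4$ submatrix with rows $\{1,2,3,4\}$ and columns $\{6,7,8,5\}$. The row and column index sets are disjoint, so this submatrix contains no diagonal entry of $Q$, and hence it must be singular. By the first step its last column restricted to rows $1,2,3$ vanishes, so it is block lower triangular:
\[
\begin{pmatrix} M & 0\\ Q_{4;\ 6:8} & q_{45}\end{pmatrix}.
\]
Its determinant is $\det(M)\,q_{45}$, and since $\det M\neq 0$ we conclude $q_{45}=0$.

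Symmetry of $Q$ then gives the transposed zero blocks, and the stated form follows, with $U=M$ invertible; here $U\in GL_3(\z)$ should be read as an integer matrix invertible over $\q$. There is no real obstacle in this argument. The only points needing care are the bookkeeping of which columns make up each $H_{M,j}$, and checking that the chosen $4\times 4$ minor genuinely avoids the diagonal of $Q$.
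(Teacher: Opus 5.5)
Your proof is correct and complete. The paper does not prove this lemma itself; it quotes it from Zhao (his Lemma 5.9), so your argument serves as a self-contained substitute.

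Your first step uses only the type-$0$ condition. Each $H_{M,j}$ has rank $2$, so the restricted columns $c_4,c_5$ lie in $\langle c_6,c_7\rangle\cap\langle c_6,c_8\rangle\cap\langle c_7,c_8\rangle=\{0\}$, which gives $Q_{1:3;\ 4:5}=0$.

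Your second step then uses the global hypothesis that the off-diagonal rank is exactly $3$. The off-diagonal minor on rows $\{1,2,3,4\}$ and columns $\{5,6,7,8\}$ has determinant $\pm\det(M)\,q_{45}$, so $q_{45}=0$. You are right that this hypothesis is genuinely needed, since type $0$ by itself does not make $D$ diagonal.

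You are also right to read $U\in GL_3(\z)$ as an integer matrix invertible over $\q$, since $U=M$ need not be unimodular.
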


\begin{remark}
    We note that both Lemma \ref{two_twos_lemma} and Lemma \ref{three_twos_lemma} imply $\rank(Q_{1:3; \ 4:6})=1$.
\end{remark}

Theorem \ref{the_main_theorem} for the last two cases is proven in Section \ref{section_twos}.

We note that if $\rank(V_{M,1})=\rank(V_{M,2})=\rank(V_{M,3})=3$, then $Q$ has the structure as in Lemma \ref{zero_twos_lemma}, reflected in its diagonal, and similarly for the other structural lemmas.

In the next two sections, we prove Theorem \ref{the_main_theorem} in the case where, for some arrangement of variables, we have the case from Lemma \ref{zero_twos_lemma} (Section \ref{section_three_three}) or from Lemma \ref{one_two_lemma} (Section \ref{section_three_two}). In the final section, we will prove the theorem in the case that for no arrangement of variables we have a situation from Lemma \ref{zero_twos_lemma} or from Lemma \ref{one_two_lemma}.

\section{The case \texorpdfstring{$M$}{M} has an off-diagonal submatrix of type 3 (vertically or horizontally).}
\label{section_three_three}

In this section, we prove Theorem \ref{the_main_theorem} in the following case:

\begin{proposition}
\label{proposition_3}
    Let $Q$ be an invertible, symmetrix matrix of off-diagonal rank $3$, encoding a quadratic form in $8$ variables.

    If $Q$ has an off-diagonal, invertible submatrix $M$ of (vertical or horizontal) type $3$, then we can bound its minor arc: $$\int_{\alpha \in \mathfrak{m}} \sum_{x \in [X]^8} \Lambda(x) e \big( \alpha(Q(x) - N) \big) \ d\alpha = O(X^6L^{-K}).$$
\end{proposition}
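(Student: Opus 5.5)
By symmetry (reflecting in the diagonal) we may assume $M$ is horizontally of type $3$. Then Lemma \ref{zero_twos_lemma} gives
\[
Q=\begin{pmatrix} E & UC\\ C^TU^T & D+C^THC\end{pmatrix},
\]
with $U\in GL_3(\q)$, $C\in M_{3\times 5}(\z)$ of rank $3$, and $D=\mathrm{diag}(d_4,\dots,d_8)$. Writing $y=(x_1,x_2,x_3)^T$ and $z=(x_4,\dots,x_8)^T$, we have
\[
Q(x)=y^TEy+2y^TUCz+(Cz)^TH(Cz)+\sum_{i=4}^8 d_ix_i^2.
\]
The plan is to introduce three auxiliary variables $w=Cz$, so that
\[
Q=y^TEy+2y^TUw+w^THw+\sum_i d_ix_i^2,
\]
and then apply Lemma \ref{the_main_tool_lemma} with $n=8$, $m=3$. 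In this form $x_4,\dots,x_8$ appear only diagonally, and all interaction is carried by $(y,w)$.

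Since $Q$ is non-degenerate and the block involving $(y,w)$ has size $6$, at least two of the $d_i$ must be nonzero. Indeed, if $d_i=0$ then $e_i^T Q=(e_i^TC^TU^T,\ e_i^TC^THC)$, so the rows with $d_i=0$ lie in a space of dimension at most $3$ coming from $C^T$. More precisely, the $z$-rows of $Q$ are $C^T(U^T\ \ HC)+(0\ \ D)$, so $\rank Q\le 3+3+\rank D$, giving $\rank D\ge 2$. Choose $c$ with $d_c\neq 0$ as the lone variable $C=(x_c)$.

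The natural split is then $A=(y_1,y_2,y_3,w_1,w_2,w_3)$ with $P_A=\begin{pmatrix}E&U\\U^T&H\end{pmatrix}$, and $B$ consisting of the remaining four $x_i$ with $i\ge4$, $i\ne c$, with $P_B$ diagonal. The trouble is that $A$ contains all three $w$'s but no $z$-variable, so $\mathcal{A}$ would be the matrix $-I$ placed on the $w$-columns and zero on the $y$-columns, which is fine. Meanwhile $\mathcal{B}$ consists of four columns of $C$, and condition (3B) needs some three of them to be invertible. Condition (4B) needs $P_B\binom{D_1}{-\mathcal{B}_2^{-1}\mathcal{B}_1}$ to have rank at least $2$, but that matrix has only one column, so the naive split fails.

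I would therefore redistribute: move some diagonal variables into $A$, and move some $y$'s into $B$ together with rebalanced auxiliary variables. One option is to define instead $w=UCz$, or $v=2U^Ty+\dots$, in the style of Lemma \ref{five_five_zero_lemma}. For instance, put $A=(x_{i},x_{j},\dots)$ together with linear forms $v_k=2(U^Ty)_k+(HCz)_k$, so that $Q=y^TEy+(Cz)^Tv+\sum d_ix_i^2$, and pair each $w$-type variable with a $v$-type variable across the two groups. This mirrors the proof of Lemma \ref{five_five_zero_lemma}, where each group receives an off-diagonal block $\begin{pmatrix}0&1/2\\1/2&0\end{pmatrix}$ guaranteeing rank at least $2$ in condition (4), and a choice of columns of $C$ (respectively of $U$) guaranteeing condition (3).

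The main obstacle is this case analysis: ensuring simultaneously that the chosen $3\times 3$ minors of $C$ are invertible, that the diagonal variables placed in each group have $d_i\neq 0$, and that condition (4) holds. I would handle degenerate subcases by falling back on Lemma \ref{five_five_zero_lemma}, which applies when five indices have a zero off-diagonal block, on Lemma \ref{zero_column_lemma}, which applies when a variable is isolated (for example a zero column of $C$), and on Lemma \ref{three_columns_rank_1}. These are used the same way the earlier special-case lemmas exploit the non-degeneracy of $Q$, and, as in Lemma \ref{zero_column_lemma}, the rank of $E$ or of $C$ is leveraged to verify condition (4).
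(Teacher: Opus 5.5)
Your proposal has a genuine gap: it never exhibits a grouping that satisfies the hypotheses of Lemma \ref{the_main_tool_lemma}, and the repairs you sketch cannot work.

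\emph{The dimension constraint.} Conditions (3A) and (4A) force $u-m\geqslant 2$, because the matrix in (4A) has only $u-m$ columns. Likewise $v-m\geqslant 2$. On the other hand $u+v=n+m-1=m+7$. Hence $m\leqslant 3$, and when $m=3$ both groups have exactly five variables.

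\emph{Why your suggested substitutions fail.}
\begin{enumerate}
\item Any scheme that uses both $w=Cz$ and $v=2U^Ty+HCz$ as auxiliary variables has $m=6$, so it is excluded by the constraint above.
\item Keeping only $v$ gives $Q=y^TEy+z^TC^Tv+z^TDz$. Here every $v_k$ and every $x_c$ with a nonzero column of $C$ carries cross terms, and the $y_i$ are coupled through $E$. So in general no variable is isolated and condition (1) fails.
\item Taking $w=UCz$ only turns $2y^TUw$ into $2y^Tw$. The six variables $(y,w)$ then remain coupled, exactly as in the naive split you correctly rejected.
\end{enumerate}
Your fallback lemmas only cover degenerate configurations. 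For example, when $H=0$ the block $Q_{4:8;\,4:8}=D$ is diagonal and Lemma \ref{five_five_zero_lemma} applies. Nothing in your plan handles a general $H\neq 0$.

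\emph{The missing idea.} Shift the auxiliary variables by a multiple of $y$ so as to complete the square.
\begin{enumerate}
\item Diagonalise $H=\operatorname{diag}(h_1,h_2,h_3)$, replacing $C,U$ by $PC,UP^T$.
\item Assume $h_1\neq 0$ and, after permuting $x_1,x_2,x_3$, that $u_{31}\neq 0$.
\item Set $v=Cz+Sy$ with $s_{ij}=1_{h_i\neq 0}\,u_{ji}/h_i$. Then
\[
Q=y^T(E-2US+S^THS)y+2y^T(U-S^TH)v+v^THv+z^TDz,
\]
and the $i$-th column of $U-S^TH$ vanishes whenever $h_i\neq 0$.
\end{enumerate}
So $x_4,\dots,x_8$ become purely diagonal, and the auxiliary variable $v_1$ occurs only through $h_1v_1^2$. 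It is $v_1$, not a prime $x_c$, that serves as the lone variable.

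With $m=3$, take $A=(x_4,\dots,x_8)$ with $P_A=D$, and $B=(y_1,y_2,y_3,v_2,v_3)$. First permute $x_4,\dots,x_8$ so that:
\begin{itemize}
\item the first two diagonal entries of $D$ are nonzero, and
\item the last three columns of $C$ form an invertible matrix.
\end{itemize}
This is possible because the columns of $C$ indexed by zero entries of $D$ are linearly independent and $\rank C=3$. Then the conditions check out:
\begin{itemize}
\item (3A) and (4A) are immediate.
\item (3B) follows from $s_{13}=u_{31}/h_1\neq 0$.
\item (4B) reduces to a $2\times 2$ matrix whose determinant is a nonzero multiple of $\det U$.
\end{itemize}
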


We recall the structure of $Q$ from Lemma \ref{zero_twos_lemma}. Since $C^THC$ is symmetric for $C$ a $3 \times 5$ matrix (and $5 \geqslant 3$), $H$ is also symmetric. As it also has rational entries, it is diagonalisable. We may assume that $H$ is diagonal (otherwise, if $H=P^TH'P$ for $H'$ diagonal and $P^TP=Id_3$, we can substitute $C'=PC, U'=UP^T$). Let us note that the case when $H=0$ has already been handled in Lemma \ref{five_five_zero_lemma}. Hence, from now on we assume that $H \neq 0$. Let $H=\begin{pmatrix}
    h_1 & 0 & 0\\
    0 & h_2 & 0\\
    0 & 0 & h_3
\end{pmatrix}$, where, without loss of generality, $h_1 \neq 0$. Since $U$ is invertible, we have $u_{i1} \neq 0$ for some $i$. By permuting $(x_1, x_2, x_3)$ if necessary, we may assume $u_{31} \neq 0$.

Let $d_1, \dots, d_5$ denote the diagonal entries of $D$. We will show that we may permute variables $x_4, x_5, x_6, x_7, x_8$ so that:
\begin{itemize}
    \item $d_1, d_2 \neq 0$,
    \item $\rank(C_{1:3; \ 3:5})=3$.
\end{itemize}
Let $J=\{1 \leqslant j \leqslant 5: d_{j}=0\}$. Since $8=\rank(Q) \leqslant 3+\rank(C)+\rank(D)\leqslant 6+\rank(D)$, so $2 \leqslant \rank(D)$ and $|J| \leqslant 3$. We note that the columns of $C_{1:3; \ J}$ are linearly independent, as the columns of $Q_{1:8; \ J}$ are linearly independent. Hence, since $\rank(C)=3$, there exists $K \subset \{1, \dots, 5\}$, $|K|=3-|J|$ such that $\rank(C_{1:3; \ K \cup J})=3$. Then, permuting the variables so that $K \cup J=\{3,4,5\}$, we get a permutation satisfying $d_{1}, d_{2} \neq 0$ and $\rank(C_{1:3; \ 3:5})=3$.

Let us denote:
\begin{equation}
    z = \begin{pmatrix}x_1 & x_2 & x_3\end{pmatrix}^T, \ \ y = \begin{pmatrix}x_4 & x_5 & x_6 & x_7 & x_8\end{pmatrix}^T, \ \ v = \begin{pmatrix}v_1 & v_2 & v_3\end{pmatrix}^T = Cy + Sz
\end{equation}
for $S$ such that $s_{ij}=1_{h_i \neq 0} \cdot \frac{u_{ji}}{h_i}$ (we assume that the value of this expression is $0$ also if $h_i=0$). Let us note that since $u_{31} \neq 0$, we also have $s_{13} \neq 0$. Let $F=E+S^THS-2US$.

We need to bound by $O(X^6L^{-K})$ the following:
\begin{multline*}
    \int_{\alpha \in \mathfrak{m}} \sum_{x \in [X]^8} \Lambda(x) e\big(\alpha (x^TQx-N)\big) \ d \alpha \\ = \int_{\alpha \in \mathfrak{m}} \sum_{\substack{y \in [X]^5, \\ z \in [X]^3}} \Lambda(y)\Lambda(z)e \Big(\alpha\big(z^TEz + 2z^TUCy + y^T(C^THC+D)y-N\big) \Big),
\end{multline*}
which equals
\begin{multline*}
    \sum_{\substack{y \in [X]^5, \\ z \in [X]^3, \\ v_1, v_2, v_3 \ll X}} \Lambda(y)\Lambda(z)\\ \cdot \int_{\substack{\alpha \in \mathfrak{m}}} e \Big( \alpha \big( z^T(E+S^THS-2US)z+ 2z^T(U-S^TH)v+v^THv+y^TDy-N \big) \Big) \ d\alpha \\ \cdot \int_{\substack{\beta_1^3 \in [0, 1]}} e \bigg( \sum_{i=1}^3\beta_i \Big( v_i - \sum_{j=1}^5c_{ij}y_j-\sum_{j=1}^3s_{ij}z_j \Big) \bigg) \ d\beta_i.
\end{multline*}

We note that $U-S^TH=\begin{pmatrix}
    0 & u_{12} \cdot 1_{h_2 = 0} & u_{13} \cdot 1_{h_3 = 0}\\
    0 & u_{22} \cdot 1_{h_2 = 0} & u_{23} \cdot 1_{h_3 = 0}\\
    0 & u_{32} \cdot 1_{h_2 = 0} & u_{33} \cdot 1_{h_3 = 0}\\
\end{pmatrix}$ from the choice of $S$, so the products $v_1z_i$ do not appear in the exponent.

Now, let us take: $$A=(y_1, y_2, y_3, y_4, y_5), \ B=(z_1, z_2, z_3, v_2, v_3), \ C=(v_1).$$ We now show that such $A, B, C$ satisfy the preconditions of Lemma \ref{the_main_tool_lemma}.

$(1):$ This condition is satisfied for:
\begin{equation*}
    P_A=\begin{pmatrix}
    d_{1} & 0 & 0 & 0 & 0\\
    0 & d_{2} & 0 & 0 & 0\\
    0 & 0 & d_{3} & 0 & 0\\
    0 & 0 & 0 & d_{4} & 0\\
    0 & 0 & 0 & 0 & d_{5}\\
\end{pmatrix},
\end{equation*}
\begin{equation*}
    P_B=\begin{pmatrix}
    f_{11} & f_{12} & f_{13} & u_{12} \cdot 1_{h_2 = 0} & u_{13} \cdot 1_{h_3 = 0}\\
    f_{21} & f_{22} & f_{23} & u_{22} \cdot 1_{h_2 = 0} & u_{23} \cdot 1_{h_3 = 0}\\
    f_{31} & f_{32} & f_{33} & u_{32} \cdot 1_{h_2 = 0} & u_{33} \cdot 1_{h_3 = 0}\\
    u_{12} \cdot 1_{h_2 = 0} & u_{22} \cdot 1_{h_2 = 0} & u_{32} \cdot 1_{h_2 = 0} & h_2 & 0\\
    u_{13} \cdot 1_{h_3 = 0} & u_{23} \cdot 1_{h_3 = 0} & u_{33} \cdot 1_{h_3 = 0} & 0 & h_3\\
\end{pmatrix}
\end{equation*}
and $c=h_1.$

$(2):$ We assumed that $h_1 \neq 0$.

$(3A):$ We have $\mathcal{A}=C$, so $\mathcal{A}_2=C_{1:5; \ 3:5}$. We may assume it is invertible.

$(3B):$ We have $\mathcal{B}=\begin{pmatrix}
    s_{11} & s_{12} & s_{13} & 0 & 0\\
    s_{21} & s_{22} & s_{23} & -1 & 0\\
    s_{31} & s_{32} & s_{33} & 0 & -1
\end{pmatrix}$, so $\mathcal{B}_1=\begin{pmatrix}
    s_{11} & s_{12}\\
    s_{21} & s_{22}\\
    s_{31} & s_{32}
\end{pmatrix}$, $\mathcal{B}_2=\begin{pmatrix}
    s_{13} & 0 & 0\\
    s_{23} & -1 & 0\\
    s_{33} & 0 & -1
\end{pmatrix}$ so, since $s_{13}\neq0$, $\mathcal{B}_2$ is invertible.

$(4A):$ We need the rank of $\begin{pmatrix}
    d_{1} & 0 & 0 & 0 & 0\\
    0 & d_{2} & 0 & 0 & 0\\
    0 & 0 & d_{3} & 0 & 0\\
    0 & 0 & 0 & d_{4} & 0\\
    0 & 0 & 0 & 0 & d_{5}\\
\end{pmatrix}
\cdot
\begin{pmatrix}
    \begin{matrix}
        1 & 0\\
        0 & 1
    \end{matrix}\\
    -C_{1:3; \ 3:5}^{-1} \cdot C_{1:3; \ 1:2}
\end{pmatrix}$ to be at least $2$, which is the case as the first two rows of the product are $\begin{pmatrix}d_1 & 0\end{pmatrix}$ and $\begin{pmatrix}0 & d_2\end{pmatrix}$.

$(4B):$ We note that:
\begin{multline*}
    -\mathcal{B}_2^{-1} \cdot \mathcal{B}_1=\begin{pmatrix}
    -\frac{s_{11}}{s_{13}} & -\frac{s_{12}}{s_{13}}\\
    -\frac{s_{23}s_{11}}{s_{13}}+s_{21} & \frac{s_{12}s_{23}}{s_{13}}+s_{22}\\
    -\frac{s_{11}s_{33}}{s_{13}}+s_{31} & \frac{s_{12}s_{33}}{s_{13}}+s_{32}
\end{pmatrix}\\
=\begin{pmatrix}
    -\frac{u_{11}}{u_{31}} & -\frac{u_{21}}{u_{31}}\\
    -\frac{u_{32}u_{11}}{h_2u_{31}} \cdot 1_{h_2 \neq 0}+\frac{u_{12}}{h_2} \cdot 1_{h_2 \neq 0} & \frac{u_{21}u_{32}}{h_2u_{31}} \cdot 1_{h_2 \neq 0}+\frac{u_{22}}{h_2} \cdot 1_{h_2 \neq 0}\\
    -\frac{u_{11}u_{33}}{h_3u_{31}}\cdot 1_{h_3 \neq 0}+\frac{u_{13}}{h_3} \cdot 1_{h_3 \neq 0} & \frac{u_{21}u_{33}}{h_3u_{31}}\cdot 1_{h_3 \neq 0}+\frac{u_{23}}{h_3} \cdot 1_{h_3 \neq 0}
\end{pmatrix}.
\end{multline*}
We need to show that the rank of $P_B \cdot \begin{pmatrix}
    \begin{matrix}
        1 & 0\\ 0 & 1
    \end{matrix}\\
    -\mathcal{B}_2^{-1} \cdot \mathcal{B}_1
\end{pmatrix}$ is at least $2$. We in particular show that the two bottom rows of this product are nonzero and linearly independent, calculating a $2 \times 2$ matrix of these rows:
\begin{multline*}
\begin{pmatrix}
    u_{12} \cdot 1_{h_2 = 0} & u_{22} \cdot 1_{h_2 = 0} & u_{32} \cdot 1_{h_2 = 0} & h_2 & 0\\
    u_{13} \cdot 1_{h_3 = 0} & u_{23} \cdot 1_{h_3 = 0} & u_{33} \cdot 1_{h_3 = 0} & 0 & h_3\\
\end{pmatrix}\\
\cdot \begin{pmatrix}
    1 & 0\\
    0 & 1\\
    -\frac{u_{11}}{u_{31}} & -\frac{u_{21}}{u_{31}}\\
    -\frac{u_{32}u_{11}}{h_2 u_{31}} \cdot 1_{h_2 \neq 0}+\frac{u_{12}}{h_2} \cdot 1_{h_2 \neq 0} & -\frac{u_{21}u_{32}}{h_2u_{31}} \cdot 1_{h_2 \neq 0}+\frac{u_{22}}{h_2} \cdot 1_{h_2 \neq 0}\\
    -\frac{u_{11}u_{33}}{h_3u_{31}} \cdot 1_{h_3 \neq 0}+\frac{u_{13}}{h_3} \cdot 1_{h_3 \neq 0} & -\frac{u_{21}u_{33}}{h_3u_{31}} \cdot 1_{h_3 \neq 0}+\frac{u_{23}}{h_3} \cdot 1_{h_3 \neq 0}
\end{pmatrix}\\
= \frac{1}{u_{31}}\begin{pmatrix}
    (u_{12}u_{31}-u_{11}u_{32}) \cdot (1_{h_2 = 0} + 1_{h_2 \neq 0}) & (u_{22}u_{31}-u_{32}u_{21}) \cdot (1_{h_2 = 0} + 1_{h_2 \neq 0})\\
    (u_{13}u_{31}-u_{11}u_{33}) \cdot (1_{h_3 = 0} + 1_{h_3 \neq 0}) &
    (u_{23}u_{31}-u_{21}u_{33}) \cdot (1_{h_3 = 0} + 1_{h_3 \neq 0})
\end{pmatrix}\\
= \frac{1}{u_{31}}\begin{pmatrix}
    u_{12}u_{31}-u_{11}u_{32} & u_{22}u_{31}-u_{32}u_{21}\\
    u_{13}u_{31}-u_{11}u_{33} & u_{23}u_{31}-u_{21}u_{33}
\end{pmatrix}.
\end{multline*}
We note that the determinant of this matrix is $\frac{1}{u_{31}}\det(U) \neq 0$, so these two rows are non-zero linearly independent, so the rank of the matrix in question is $2$ and hence $A$ satisfies $(4)$.

Hence, all the conditions of Lemma \ref{the_main_tool_lemma} are satisfied, and applying the lemma finishes the proof in this case.

\section{The case \texorpdfstring{$M$}{M} has an off-diagonal submatrix of type 2 (vertically or horizontally).}
\label{section_three_two}

In this section, we prove the following: 
\begin{proposition}
\label{proposition_2}
    Let $Q$ be an invertible, symmetrix matrix of off-diagonal rank $3$, encoding a quadratic form in $8$ variables.

    If $Q$ has an off-diagonal, invertible submatrix $M$ of (vertical or horizontal) type $2$, then we can bound its minor arc: $$\int_{\alpha \in \mathfrak{m}} \sum_{x \in [X]^8} \Lambda(x) e \big( \alpha(Q(x) - N) \big) \ d\alpha = O(X^6L^{-K}).$$
\end{proposition}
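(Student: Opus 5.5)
By symmetry (reflecting across the diagonal) we may assume that $M=Q_{1:3;\,6:8}$ is of horizontal type $2$. Then Lemma \ref{one_two_lemma} gives
\[
Q=\begin{pmatrix} E & UC & \gamma\\ C^TU^T & D+C^THC & \xi\\ \gamma^T & \xi^T & f\end{pmatrix},
\]
with $U\in M_{3\times 2}(\q)$, $C\in M_{2\times 4}(\z)$ of rank $2$, $D=\mathrm{diag}(d_1,\dots,d_4)$, and $(U\ \gamma)\in GL_3(\q)$. The plan is to copy the argument of Section \ref{section_three_three}, with one extra variable $x_8$ that carries the column $(\gamma,\xi,f)$.

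\textbf{Normalisation.} As in Section \ref{section_three_three}, $H$ is symmetric, and after an orthogonal change of $C$ and $U$ we may assume $H=\mathrm{diag}(h_1,h_2)$. If $H=0$, then $x_4,\dots,x_7$ interact only through $D$, which is diagonal. Together with one of $x_1,\dots,x_3$ or $x_8$, this should give five mutually orthogonal indices, and Lemma \ref{five_five_zero_lemma} applies. If that is not directly available, I would remove $x_8$ by a helping variable; this reduces to the rank-$1$ situation of Lemma \ref{three_columns_rank_1} or Lemma \ref{zero_column_lemma}. So assume $h_1\neq 0$, and after permuting $x_1,x_2,x_3$, that $u_{31}\neq0$. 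Exactly as before, rank considerations ($8\le 3+2+1+\rank D$) give $\rank D\ge 2$. We then permute $x_4,\dots,x_7$ so that $d_1,d_2\neq0$ and $C_{1:2;\,3:4}$ is invertible.

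\textbf{Helping variables.} Set $v=Cy+Sz$ with $s_{ij}=1_{h_i\neq0}u_{ji}/h_i$, and $w=2\gamma^Tz+2\xi^Ty+fx_8$. The exponent then becomes
\[
z^TFz+2z^T(U-S^TH)v+v^THv+y^TDy+x_8w ,
\]
and the first column of $U-S^TH$ vanishes, so $v_1$ is isolated with coefficient $h_1$. Take $C=(v_1)$, $A=(y_1,y_2,y_3,y_4)$ with $P_A=D$ and $\mathcal A_2=C_{1:2;3:4}$, and put everything else in $B=(z_1,z_2,z_3,v_2,x_8,w)$. Condition (4A) follows from $d_1,d_2\ne0$ as before.

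\textbf{Difficulty.} The term $2\xi^Ty$ in $w$ means the linear constraints couple $A$ and $B$. The linear-constraint matrix for $A$ then has two rows from $v$ and one from $w$, which forces $u\ge3$ and $\mathcal A_2$ to be $3\times3$. I would therefore include $x_8$ or $w$ in $A$ instead. A natural choice is $A=(y_1,\dots,y_4,x_8,w)$ with $P_A=D\oplus\begin{pmatrix}0&1/2\\1/2&0\end{pmatrix}$, whose $\mathcal A_2$ is built from $C_{1:2;3:4}$ and the $-1$ in the $w$ row, and $B=(z_1,z_2,z_3,v_2)$ with constraints $v$ (through $S$) and $w$ (through $2\gamma$). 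Condition (3B) then asks that the $3\times4$ matrix formed by the rows $S$ and $2\gamma^T$ plus $-e_2$ have an invertible $3\times3$ block. This should follow from $(U\ \gamma)\in GL_3$ together with $s_{13}\ne0$. Condition (4B) reduces to a $2\times2$ determinant of the same kind as in Section \ref{section_three_three}, and should be proportional to $\det(U\ \gamma)$.

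\textbf{Main obstacle.} The hardest step is checking (3B) and (4B) in the degenerate subcases $h_2=0$ or $h_2\neq0$, where $P_B$ has rank only about $2$. If the rank in (4B) drops, I would move $v_2$ into $A$, or use the freedom to permute $x_1,x_2,x_3$ (choosing which $u_{i1}\neq0$) to restore rank $2$, falling back on Lemma \ref{zero_column_lemma} when some $\gamma$ or $u$ column vanishes.
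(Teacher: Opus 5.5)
Your proposal has two genuine gaps: the partition you settle on in the main case cannot satisfy the lemma, and the reduction you give for $H=0$ fails.

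\textbf{The main case $h_1\neq 0$.} Your final partition $A=(y_1,\dots,y_4,x_8,w)$, $B=(z_1,z_2,z_3,v_2)$, $C=(v_1)$ cannot satisfy Lemma \ref{the_main_tool_lemma}.
\begin{itemize}
\item There are $m=3$ helping variables, $v_1,v_2,w$. All $m$ constraint rows enter both $\mathcal{A}$ and $\mathcal{B}$, as you yourself use for (3B).
\item With $v=4$, the matrix $\begin{pmatrix}D_{v-m}\\ -\mathcal{B}_2^{-1}\mathcal{B}_1\end{pmatrix}$ is a single column, so the rank in $(4B)$ is at most $1$. It does not reduce to a $2\times 2$ determinant as in Section \ref{section_three_three}; there $H$ was $3\times 3$ and $B$ had five variables.
\item This is not a bookkeeping artefact. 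The diagonal terms alone make the $B$-side count in the proof of the lemma of size $X^{4}$, against the required $X^{2v-m-2}=X^{3}$, so you lose $X^{1/2}$.
\end{itemize}
Since $u+v=10$ and each side needs at least $m+2=5$ variables, you are forced into a $5+5$ split. The missing idea is to move one $y_i$ with $d_i\neq 0$ into $B$; it then supplies the row $\begin{pmatrix}0 & d_i\end{pmatrix}$ in $(4B)$.

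The paper takes $A=(y_1,x_8,y_3,y_4,w)$, $B=(z_3,y_2,z_1,z_2,v_2)$, $C=(v_1)$. Instead of normalising $u_{31}\neq 0$, it permutes $x_1,x_2,x_3$ so that $\begin{pmatrix}u_{11}&\gamma_1\\ u_{21}&\gamma_2\end{pmatrix}$ is invertible. This makes $\mathcal{B}_2$ (the columns of $z_1,z_2,v_2$) invertible. The remaining entry needed in $(4B)$ then equals $\det(U\ \gamma)$ divided by this minor. Your fallback of moving $v_2$ into $A$ goes the wrong way, since it shrinks $B$ further.

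\textbf{The case $H=0$.} Your reduction here does not work.
\begin{itemize}
\item Lemma \ref{five_five_zero_lemma} needs a fifth index orthogonal to $4,\dots,7$, i.e.\ a zero row of $UC$, or $\xi=0$. Since $UC$ is a $3\times 4$ matrix of rank $2$, this fails generically.
\item Lemmas \ref{zero_column_lemma} and \ref{three_columns_rank_1} are statements about the shape of $Q$ itself. Introducing a helping variable does not change $Q$, and $Q$ need not have either shape.
\item Your main argument does not cover this case either. The isolated variable $v_1$ would have coefficient $h_1=0$, violating $(2)$.
\end{itemize}
The paper treats this case separately in Lemma \ref{case_233_with_h_0}, as follows.
\begin{itemize}
\item The isolated variable is an original variable $x_4$ with $d_4\neq 0$.
\item Helping variables $u_1,u_2$ replace two independent rows of $UC$, the third row being $a_1u_1+a_2u_2$. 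Your $w$ handles the $x_8$-row.
\item The partition is $A=(x_8,x_5,x_6,x_7,w)$, $B=(x_1,x_2,x_3,u_1,u_2)$.
\item Condition $(4A)$ needs the case analysis of Lemma \ref{structure_lemma_233_case}, which guarantees either $d_5\neq 0$ or $\det Q_{1,2,8;\,5,6,7}\neq 0$.
\item Condition $(4B)$ reduces to $q_{83}-a_1q_{81}-a_2q_{82}\neq 0$, which is where $\rank(UC\ \gamma)=3$ is used.
\end{itemize}
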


Again, without loss of generality the top-right submatrix of $Q$ has vertical rank $3$. We recall its form from Lemma \ref{zero_twos_lemma}. As in the previous case, we may assume $H=\begin{pmatrix}h_1 & 0 \\ 0 & h_2 \end{pmatrix}$ is diagonal.

We will deal with the case $H=0$ separately later in Lemma \ref{case_233_with_h_0}, so for now let us assume that $h_1 \neq 0$.

We will now show that (permuting the variables $(x_4, x_5, x_6, x_7)$ if necessary) we may assume:
\begin{itemize}
    \item $d_{1}, d_{2} \neq 0$,
    \item $C_{1,2; \ 3, 4}$ is invertible.
\end{itemize}
Let $J=\{1 \leqslant j \leqslant 4: d_j=0\}$. We note that since $\rank(Q_{1:8; \ J})=J$ we have $\rank(C_{1,2; \ J})=J$, so $|J| \leqslant 2$. Since $\rank(C)=2$, we can choose $K \subset \{1, 2, 3, 4\}$, $|K|=2-|J|$ such that $\rank(C_{1,2; \ K \cup J})=2$. Now we can permute the variables $(x_4, x_5, x_6, x_7)$ so that $K \cup J=\{3, 4\}$. Then the listed conditions are satisfied.

Since from the structural lemma the two columns of $U$ and $\gamma$ are independent, let
\begin{equation*}
    V=\begin{pmatrix}
    u_{11} & u_{12} & \gamma_{1}\\
    u_{21} & u_{22} & \gamma_{2}\\
    u_{31} & u_{32} & \gamma_{3}\\
\end{pmatrix}\in GL_3
\end{equation*}
(this notation makes some calculations later seem more natural). Since $V$ is invertible, in particular its first and third columns are linearly independent, so (permuting variables $x_1, x_2, x_3$ as necessary) we may assume $\begin{pmatrix}
    v_{11} & v_{13} \\ v_{21} & v_{23}
\end{pmatrix}$ is invertible.

Let us denote:
\begin{equation}
    z=\begin{pmatrix}x_1 & x_2 & x_3\end{pmatrix}^T, \ \ y=\begin{pmatrix}x_4 & x_5 & x_6 & x_7\end{pmatrix}^T, \ \ t=x_8, \ \ r=\begin{pmatrix}r_1 \\ r_2 \end{pmatrix} = Cy+Sz,
\end{equation}
where $S$ is a $2 \times 3$ matrix such that $s_{ij}=1_{h_i \neq 0}\cdot\frac{u_{ji}}{h_i}$ (where again, we assume that the value of this expression is $0$ when $h_i=0$). Finally, let:
\begin{equation}
    F=E-2US+S^THS, \ \ w=\gamma^Tz+\xi^T y+f t.
\end{equation}
Now we need to bound by $O(X^6L^{-K})$ the following:
\begin{multline}
    \int_{\alpha \in \mathfrak{m}} \sum_{x \in [X]^8} \Lambda(x) e(\alpha (x^TQx-N)) d \alpha \\
    = \int_{\alpha \in \mathfrak{m}} \sum_{\substack{z \in [X]^3, \\ y \in [X]^4, \\ t \in [X]}} \Lambda(y)\Lambda(z)\Lambda(t) e \Big( \alpha \big( z^TEz +2z^TUCy+y^T(D+C^THC)y+t(\gamma^T z + \xi^T y + f t) -N\big) \Big) \ d\alpha \\
    = \sum_{\substack{z \in [X]^3, \ y \in [X]^4, \\t \in [X], \ r_1^2 \ll X, \\ w \ll X}} \Lambda(y)\Lambda(z)\Lambda(t) \int_{\alpha \in \mathfrak{m}} e \Big( \alpha \big(z^TFz + 2z^T (U-S^TH) r + r^THr + y^TDy + tw - N \big) \Big) \ d\alpha\\
    \cdot \int_{\beta_1^2, \gamma \in [0, 1]} e \bigg( \sum_{i=1}^2 \beta_i \Big( r_i - \sum_{j=1}^4 c_{ij} y_j - \sum_{j=1}^3 s_{ij} z_j \Big) + \gamma \Big( w - \gamma^Tz - \xi^T y - ft \Big) \bigg) \ d\beta_i \ d\gamma.
\end{multline}
We note that as $U-S^TH=\begin{pmatrix}0 & 1_{h_2 = 0} u_{12} \\ 0 & 1_{h_2 = 0} u_{22} \\ 0 & 1_{h_2 = 0} u_{32} \end{pmatrix}$, so $r_1z_i$ does not appear in the exponent above for any $i$.
Now let us take: $$A=(y_1, t, y_3, y_4, w), \ B=(z_3, y_2, z_1, z_2, r_2), \ C=(r_1).$$ We will now show that the preconditions of Lemma \ref{the_main_tool_lemma} are satisfied.

$(1):$ Let us take: $$P_A=\begin{pmatrix}
    d_1 & 0 & 0 & 0 & 0\\
    0 & 0 & 0 & 0 & 1/2\\
    0 & 0 & d_{3} & 0 & 0 \\
    0 & 0 & 0 & d_{4} & 0 \\
    0 & 1/2 & 0 & 0 & 0
\end{pmatrix}, \ P_B=\begin{pmatrix}
    f_{33} & 0 & f_{31} & f_{32} & 1_{h_2=0}v_{32} \\
    0 & d_2 & 0 & 0 & 0 \\
    f_{13} & 0 & f_{11} & f_{12} & 1_{h_2=0}v_{12} \\
    f_{23} & 0 & f_{21} & f_{22} & 1_{h_2=0}v_{22} \\
    1_{h_2=0}v_{32} & 0 & 1_{h_2=0}v_{12} & 1_{h_2=0}v_{22} & h_2 \\
\end{pmatrix}, \ c=h_1.$$
$(2):$ We assumed that $h_1 \neq 0$ (the other case is dealt with in Lemma \ref{case_233_with_h_0}).

$(3A):$ We have $\mathcal{A} = \begin{pmatrix}
    c_{11} & 0 & c_{13} & c_{14} & 0 \\
    c_{21} & 0 & c_{23} & c_{24} & 0 \\
    \xi_{1} & f & \xi_{3} & \xi_{4} & -1 
\end{pmatrix}$, so $\mathcal{A}_2$ is invertible since, as we showed we may assume, $C_{1,2; \ 3,4}$ is invertible.

$(3B):$ We have $\mathcal{B}=\begin{pmatrix}
\frac{v_{31}}{h_1} & c_{12} & \frac{v_{11}}{h_1} & \frac{v_{21}}{h_1} & 0 \\
1_{h_2 \neq 0} \cdot \frac{v_{32}}{h_2} & c_{22} & 1_{h_2 \neq 0} \cdot \frac{v_{12}}{h_2} & 1_{h_2 \neq 0} \cdot \frac{v_{22}}{h_2} & -1 \\
v_{33} & \xi_{2} & v_{13} & v_{23} & 0 \\
\end{pmatrix}$. Hence, $\mathcal{B}_2$ is invertible as we earlier assumed that $v_{23}v_{11}-v_{21}v_{13} \neq 0$).

$(4A):$ We need the rank of $P_A \cdot \begin{pmatrix}
    \begin{matrix}
        1 & 0\\
        0 & 1
    \end{matrix}\\
    -\mathcal{A}_2^{-1} \cdot \mathcal{A}_1
\end{pmatrix}$ ti be at least $2$, which is the case since its first row is $\begin{pmatrix}d_1 & 0\end{pmatrix}$ and its fifth row is $\begin{pmatrix}0 & 1/2 \end{pmatrix}$.

$(4B):$ We need to show that the rank of $P_B \cdot \begin{pmatrix}
    \begin{matrix}
        1 & 0 \\
        0 & 1
    \end{matrix}\\
    -\mathcal{B}_2^{-1} \cdot \mathcal{B}_1
\end{pmatrix}$ is at least $2$. The second row of the product is $\begin{pmatrix}0 & d_2\end{pmatrix}$, so we only need to show that the first column of the product has a nonzero entry. Noting that $\mathcal{B}_2^{-1}=\frac{h_1}{v_{11}v_{23}-v_{21}v_{13}}\begin{pmatrix}
        v_{23} & 0 & -\frac{v_{21}}{h_1} \\
        -v_{13} & 0 & \frac{v_{11}}{h_1} \\
        1_{h_2 \neq 0} \frac{v_{12}v_{23}-v_{22}v_{13}}{h_2} & -\frac{v_{11}v_{23}-v_{21}v_{13}}{h_1} & 1_{h_2 \neq 0}\frac{v_{11}v_{22}-v_{12}v_{21}}{h_1h_2}
    \end{pmatrix}$, we can calculate that the fifth entry of the first column is:
    \begin{multline*}
        -\frac{h_1}{v_{11}v_{23}-v_{21}v_{13}}\begin{pmatrix}1_{h_2=0}v_{32} \\ 0 \\ 1_{h_2=0}v_{12} \\ 1_{h_2=0}v_{22} \\ h_2 \end{pmatrix}^T \\ \cdot \begin{pmatrix}
    -\frac{v_{11}v_{23}-v_{21}v_{13}}{h_1}\\
    0\\
    \begin{pmatrix}
        v_{23} & 0 & -\frac{v_{21}}{h_1} \\
        -v_{13} & 0 & \frac{v_{11}}{h_1} \\
        1_{h_2 \neq 0} \frac{v_{12}v_{23}-v_{22}v_{13}}{h_2} & -\frac{v_{11}v_{23}-v_{21}v_{13}}{h_1} & 1_{h_2 \neq 0}\frac{v_{11}v_{22}-v_{12}v_{21}}{h_1h_2}
    \end{pmatrix} \cdot \begin{pmatrix}
        \frac{v_{31}}{h_1} \\ 1_{h_2 \neq 0} \frac{v_{32}}{h_2} \\ v_{33}
    \end{pmatrix}\\
    \end{pmatrix},
    \end{multline*}
    which equals $\frac{\det V}{v_{11}v_{23}-v_{21}v_{13}}$, which is nonzero from the assumptions.

Hence, all the preconditions of Lemma \ref{the_main_tool_lemma} are satisfied and so we may apply it to show that Theorem \ref{the_main_theorem} holds in this case as well.

Now we handle the special case $H=0$.

\begin{lemma}
\label{case_233_with_h_0}
    If $Q$ is of the form: $\begin{pmatrix}
        E & C & \gamma\\
        C^T & D & \xi\\
        \gamma^T & \xi^T & f
    \end{pmatrix}$, where $E \in M_{3 \times 3}(\z)$, $\gamma \in M_{3 \times 1}(\z)$,  $\xi \in M_{4 \times 1}(\z)$, $f \in \mathbb{Z}$, $C \in M_{3 \times 4}(\z)$ with $\rank(C) = 2$ and $\rank(C \ \gamma) = 3$ and $D \in M_{4 \times 4}(\z)$ diagonal, then $Q$ satisfies Theorem \ref{the_main_theorem}.
\end{lemma}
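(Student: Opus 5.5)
We need to prove Lemma \ref{case_233_with_h_0}: $Q=\begin{pmatrix}E & C & \gamma\\ C^T & D & \xi\\ \gamma^T & \xi^T & f\end{pmatrix}$ with $D$ diagonal, $\rank C=2$ and $\rank(C\ \gamma)=3$. The plan is to apply Lemma \ref{the_main_tool_lemma}, but now the variable $C$ must be one of $x_4,\dots,x_7$, since $H=0$ leaves no auxiliary quadratic term $h_1r_1^2$ to use.

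\textbf{Preliminary reductions.} Write $z=(x_1,x_2,x_3)$, $y=(x_4,\dots,x_7)$, $t=x_8$, with diagonal entries $d_1,\dots,d_4$ of $D$. Since $\rank C=2$, we can write $C=UC'$ with $U\in M_{3\times 2}$ and $C'\in M_{2\times 4}$, both of rank $2$. Introduce auxiliary variables $r=C'^T$-type combinations $r_i=\sum_j u_{ji}z_j$ for $i=1,2$ and $w=\gamma^Tz+\xi^Ty+ft$. The exponent then becomes
\[
z^TEz+2r^TC'y+y^TDy+tw .
\]
If some $\xi_j\neq 0$ interferes, we absorb it as in Section \ref{section_three_two}. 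Next we want to choose an index $j_0$ with $d_{j_0}\neq 0$ such that the column $C'_{\cdot j_0}$ is removable. Let $J=\{j: d_j=0\}$. By non-degeneracy the columns $Q_{\cdot,J}$ are independent, so $|J|\le 2$; combined with $\rank C'=2$, we can pick $j_0\notin J$. To avoid the cross term $2r^TC'_{\cdot j_0}y_{j_0}$, we further introduce $s=C'y$, i.e.\ $s_i=\sum_j c'_{ij}y_j$, so that the exponent becomes $z^TEz+2r^Ts+y^TDy+tw$. If $D=0$ on too many coordinates, Lemma \ref{five_five_zero_lemma} or Lemma \ref{zero_column_lemma} already applies. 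In particular, if some column of $C'$ vanishes while $\xi_j=0$, Lemma \ref{zero_column_lemma} handles that case.

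\textbf{Grouping and checking the conditions.} Take $C=(y_{j_0})$, so $d=d_{j_0}\neq0$ and conditions $(1)$ and $(2)$ hold provided $y_{j_0}$ appears only in the diagonal term. This forces the linear forms $s$ and $w$ to contain $y_{j_0}$ only through the constraint rows, which is allowed since constraints are linear. Then split
\[
A=(z_1,z_2,z_3,r_1,r_2,t,w),\qquad B=(\text{remaining } y_j,\ s_1,s_2),
\]
with $m=5$ constraints ($r_1,r_2,s_1,s_2,w$). If this violates $u,v\ge m$, rebalance by putting $t,w$ in $B$ together with the $y_j$, using $P_B=\mathrm{diag}(d_j)$ plus the $tw$ block. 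Condition $(3)$ then reduces to invertibility of submatrices built from $U$ together with $\gamma$, and from $C'$. Here $\rank(C\ \gamma)=3$ gives an invertible $3\times3$ block $(U\ \gamma)$, which plays the role of $V$ from Section \ref{section_three_two}, and $\rank C'=2$ gives an invertible $2\times2$ minor avoiding $j_0$ after permuting.

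\textbf{Main obstacle.} Condition $(4)$, the rank-$\ge 2$ requirement, is the expected difficulty. For the side containing $z$, I would argue as in $(4B)$ of Section \ref{section_three_two}: compute a $2\times 2$ minor of the product and show it equals $\det(U\ \gamma)$ up to a nonzero factor. For the $y$ side, the diagonal entries $d_j\neq0$ for $j\notin J$, together with the $\tfrac12$ entry from $tw$ or $r^Ts$, supply two independent rows. If the generic grouping fails because $d_j=0$ for both free $y$'s, I would fall back on Lemma \ref{five_five_zero_lemma}: $z$-coordinates are not pairwise orthogonal in general, but $x_4,\dots,x_7$ are pairwise orthogonal, so I would look for a fifth coordinate orthogonal to all of them. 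Failing that, I would case-split on which $d_j$ vanish, exactly as in the earlier propositions.
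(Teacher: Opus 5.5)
\textbf{There is a genuine gap: your grouping cannot satisfy Lemma \ref{the_main_tool_lemma}, and no regrouping of the same five auxiliary variables can.} With $n=8$ and $m$ auxiliary variables, the tuples satisfy $u+v=7+m$. The matrix in $(4A)$ (resp.\ $(4B)$) has only $u-m$ (resp.\ $v-m$) columns, so condition $(4)$ forces $u-m\geqslant 2$ and $v-m\geqslant 2$, i.e.\ $m\leqslant 3$.

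You introduce $r_1,r_2,s_1,s_2,w$, so $m=5$. In your grouping $B$ has $v=5=m$, and in the rebalanced one $A=(z,r)$ has $u=5=m$, so $(4)$ fails outright. The other conditions fail as well:
\begin{itemize}
\item The term $2r^Ts$ couples $r\in A$ with $s\in B$, which violates $(1)$.
\item The $r$-constraints involve only $z,r$, and the $s$-constraints involve only $y,s$. Hence $\mathcal{B}$ has zero rows for $r_1,r_2$ and $\mathcal{A}$ has zero rows for $s_1,s_2$, neither has rank $m$, and $(3A)$, $(3B)$ fail.
\end{itemize}
Each auxiliary variable must be ``visible'' from both groups. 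Factoring $C=UC'$ into a pure-$z$ variable and a pure-$y$ variable destroys exactly this.

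Separately, your bound $|J|\leqslant 2$ is wrong here. A column $Q_{\cdot,j}$ with $d_j=0$ is supported on rows $1$--$3$ (through $C$, of rank $2$) and row $8$, so only $|J|\leqslant 3$ follows, and $|J|=3$ is possible. In that case no $y_j$ besides the singleton has $d_j\neq 0$, so your ``diagonal entries supply a row'' argument has nothing to work with. The fallback to Lemma \ref{five_five_zero_lemma} is also unjustified: there is no reason a coordinate orthogonal to all of $x_4,\dots,x_7$ should exist.

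\textbf{The missing idea is to use only three auxiliary variables, each mixing both groups.}
\begin{enumerate}
\item Permute $x_1,x_2,x_3$ so that rows $1,2$ of $C$ are independent and $q_{38}\neq 0$.
\item Write row $3$ of $C$ as $a_1$ times row $1$ plus $a_2$ times row $2$.
\item Set $u_i=\sum_{j=4}^7 q_{ij}x_j$ for $i=1,2$, and $v=2\sum_{i\leqslant 7}q_{8i}x_i+q_{88}x_8$. Then $Q(x)=z^TEz+2(x_1+a_1x_3)u_1+2(x_2+a_2x_3)u_2+\sum_{i=4}^7 d_ix_i^2+vx_8$.
\item Take $A=(x_8,x_5,x_6,x_7,v)$, $B=(x_1,x_2,x_3,u_1,u_2)$ and the singleton $(x_4)$ with $d_4\neq 0$.
\end{enumerate}
With this choice the conditions reduce as follows:
\begin{itemize}
\item $(3A)$ needs $Q_{1,2;6,7}$ invertible.
\item $(3B)$ needs $q_{83}\neq 0$.
\item $(4B)$ reduces to $q_{83}-a_1q_{81}-a_2q_{82}\neq 0$, which is exactly $\rank(C\ \gamma)=3$. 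This is where your $\det(U\ \gamma)$ intuition belongs.
\item For $(4A)$ it suffices that $d_5\neq 0$ or $\det Q_{1,2,8;5,6,7}\neq 0$.
\end{itemize}
Arranging these simultaneously is the content of the case analysis in Lemma \ref{structure_lemma_233_case}. Alternative (B) there is needed precisely when $d_5$ cannot be made nonzero, for example when $|J|=3$. Your proposal does not supply this analysis.
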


We start with a lemma that allows us to assume some of the values to be nonzero.

\begin{lemma}
\label{structure_lemma_233_case}
    If $Q$ is as in Lemma \ref{case_233_with_h_0}, we can permute variables $x_1, x_2, x_3$ so that:
    \begin{itemize}
        \item $Q_{1; 4:7}$, $Q_{2; 4:7}$ are linearly independent,
        \item $q_{38} \neq 0$.
    \end{itemize}
    
    Moreover, we can permute the variables $x_4, x_5, x_6, x_7$ so that one of the following conditions (denoting $d_i=q_{ii}$) is satisfied:
    \begin{enumerate}[label=(\Alph*)]
        \item $d_4, d_5 \neq 0$ and $Q_{1,2; 6,7}$ is invertible, or
        \item $d_4 \neq 0$, $C_{1,2; 6,7}$ is invertible and $Q_{1,2,8; 5,6,7}$ is invertible.
    \end{enumerate}
\end{lemma}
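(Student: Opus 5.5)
The proof is pure linear algebra on the block structure
\[
Q=\begin{pmatrix} E & C & \gamma\\ C^T & D & \xi\\ \gamma^T & \xi^T & f\end{pmatrix},
\]
with $\rank(C)=2$, $\rank(C\ \gamma)=3$, $D=\mathrm{diag}(d_4,\dots,d_7)$ and $Q$ invertible. The plan is to first choose the ordering of $x_1,x_2,x_3$, then that of $x_4,\dots,x_7$. Any configuration in which the required choice is impossible should force a degenerate structure already covered by Lemma \ref{five_five_zero_lemma}, Lemma \ref{zero_column_lemma} or Lemma \ref{three_columns_rank_1}. Such configurations may be discarded, since Lemma \ref{case_233_with_h_0} only needs to be proved outside them. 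Since Lemma \ref{case_233_with_h_0} asserts the conclusion outright, I will also try to show directly that these degenerate situations contradict invertibility or the rank hypotheses.

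\textbf{Step 1: the rows $1,2,3$.} Since $\rank(C\ \gamma)=3$ and $\rank(C)=2$, the vector $\gamma$ does not lie in the column span of $C$. Let $\lambda\neq 0$ span the left kernel of $C$, so $\lambda^TC=0$. Then $\lambda^T\gamma\neq 0$, so some $k$ has $\lambda_k\neq0$ and $\gamma_k\neq 0$. Because $\lambda_k\neq 0$, row $k$ of $C$ is a combination of the other two rows, and as $\rank(C)=2$ those other two rows are linearly independent. Permuting so that $k=3$ gives that $Q_{1;4:7}$ and $Q_{2;4:7}$ are independent and $q_{38}=\gamma_3\neq 0$.

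\textbf{Step 2: the columns $4,\dots,7$.} Let $J=\{j: d_j=0\}$. The columns $Q_{1:8;J}$ are independent because $Q$ is invertible. On $J$ the block $D$ vanishes, so these columns are supported on rows $1,2,3,8$, and hence $|J|\le 4$ with the stronger bound coming from the rank of $(C\ \gamma)^T$ restricted to $J$. Rows $1,2$ span the row space of $C$. I split according to $|J|$.

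If $|J|\le 2$, I mimic the argument of Proposition \ref{proposition_2}. The columns of $C_{1,2;J}$ are independent: the extended columns $(C_{1:3;j},\xi_j)$ over $j\in J$ are independent, and the row-$3$ entries of $C$ are combinations of rows $1,2$. Completing $J$ to a pair $\{6,7\}$ with $C_{1,2;6,7}$ invertible leaves $d_4,d_5\neq0$, which is case (A).

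If $|J|=3$, the three columns indexed by $J$, restricted to rows $1,2,8$ (effectively, since row $3$ depends on rows $1,2$ inside $C$), must form an invertible $3\times3$ matrix. I place $J=\{5,6,7\}$ and the nonzero $d$ at $4$. This makes $Q_{1,2,8;5,6,7}$ invertible. Then $C_{1,2;5,6,7}$ has rank $2$, so some pair, relabelled $\{6,7\}$, gives an invertible $C_{1,2;6,7}$. This is case (B).

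The case $|J|=4$ is excluded: it would require four independent vectors in a space spanned by rows $1,2,8$, which has dimension $3$. Independently, $\rank(D)\ge 8-3-3$ forces $|J|\le 2$ directly; that gives case (A) always, with case (B) a fallback in case the dependency count is subtler.

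\textbf{Main obstacle.} The delicate point is the claim that $C_{1,2;J}$, or $Q_{1,2,8;J}$, inherits independence from $Q_{1:8;J}$, because row $3$ has been eliminated. This is justified by writing row $3$ of $C$ as a combination of rows $1,2$ after the permutation in Step 1. The row-$3$ column of $\gamma$ is not such a combination, but for $j\in J\subset\{4,\dots,7\}$ only $C$ is involved, and row $8$ is kept separately. I will check this carefully, as it is where the distinction between (A) and (B) arises.
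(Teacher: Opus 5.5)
Your Step~1 is correct and cleaner than the paper's argument. The paper first discards $Q_{i;4:7}=0$ via Lemma~\ref{five_five_zero_lemma} and then splits into cases, whereas your left-kernel vector $\lambda$ with $\lambda^T\gamma\neq0$ gives both bullet points at once. Your handling of $|J|=3$ and the exclusion of $|J|=4$ are also correct; they are the paper's cases $4^{\circ}$ and $5^{\circ}$. Your side remark is wrong, however: $\rank(D)\geqslant 8-3-3$ does not hold here, so it does not force $|J|\leqslant2$ or give ``(A) always''. That count comes from the structure in Proposition~\ref{proposition_3}. In the present setting the columns $4{:}7$ only give $4=\rank(Q_{[8];4:7})\leqslant\rank(D)+\rank(C)+1=\rank(D)+3$. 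So $|J|=3$ genuinely occurs, and there (A) is impossible.

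The real gap is in the case $|J|\leqslant 2$, exactly where you flagged it. Independence of the columns $Q_{1:8;j}$, $j\in J$, does \emph{not} give independence of the columns of $C_{1,2;J}$. After eliminating row $3$, column $j\in J$ is determined by $(q_{1j},q_{2j},q_{8j})$. The independence may come entirely from the row-$8$ entries $\xi_j$, and keeping row $8$ ``separately'' is precisely what loses it.

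For instance, take $J=\{7\}$ with $C_{1:3;7}=0$ and $\xi_7\neq0$. Then column $7$ of $Q$ is $\xi_7e_8$ and $\det Q=-\xi_7^2\det Q_{[6];[6]}$, which can be nonzero. One can also arrange $\rank(C)=2$ and $\rank(C\ \gamma)=3$, so all hypotheses hold. Likewise $J=\{6,7\}$ with $\rank(C_{1,2;6,7})=1$ is possible. In both configurations no permutation achieves (A). Indeed, (A) forces the zero-$d$ indices into positions $6,7$, and then $Q_{1,2;6,7}$ is singular. These are the paper's cases $2.2^{\circ}$ and $3^{\circ}$, and they are why alternative (B) is in the statement.

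To close the gap you must treat them explicitly.
\begin{itemize}
\item If $J=\{j\}$ with $C_{1,2;j}=0$, then $q_{8j}\neq0$. Put $j$ in position $5$, and put in positions $6,7$ a pair from the other three columns with $C_{1,2;\cdot}$ invertible (such a pair exists). Then $\det Q_{1,2,8;5,6,7}=\pm q_{8j}\det C_{1,2;6,7}\neq0$, which is (B).
\item If $J=\{j,j'\}$ and $\rank(C_{1,2;J})=1$, write $C_{1,2;j}=cC_{1,2;j'}$ with $C_{1,2;j'}\neq0$. (Rank $0$ is impossible, since both columns would then be multiples of $e_8$.) Independence gives $q_{8j}-cq_{8j'}\neq0$. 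Choose $i\notin J$ with $C_{1,2;i,j'}$ invertible, place $j,i,j'$ in positions $5,6,7$, and place the remaining nonzero-$d$ index in position $4$. Then $\det Q_{1,2,8;5,6,7}=\pm(q_{8j}-cq_{8j'})\det C_{1,2;i,j'}\neq0$, which is again (B).
\end{itemize}
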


\begin{proof}
    First, let us prove the claim about permuting $x_1, x_2, x_3$.
    
    If we had $Q_{i; 4:7}=0$ for $1 \leqslant i \leqslant 3$, we could apply Lemma \ref{five_five_zero_lemma}, so we may assume $Q_{1; 4:7}, Q_{2; 4:7}, Q_{3; 4:7} \neq 0$. Let us consider two cases:

    $1^{\circ}$ two of the vectors $Q_{1; 4:7}, Q_{2; 4:7}, Q_{3; 4:7}$ are linearly dependent. Let $Q_{3; 4:7}=cQ_{2; 4:7}$. But then, since $\rank(C \ \gamma)=3$, we need to have $q_{28} \neq 0$ or $q_{38} \neq 0$. Swapping $x_2$ and $x_3$ if necessary, the conditions are satisfied.

    $2^{\circ}$ no two of the vectors $Q_{1; 4:7}, Q_{2; 4:7}, Q_{3; 4:7}$ are linearly independent. The thesis follows from the fact that $\rank(C)=2$, $\rank(C \ \gamma)=3$, so $\gamma \neq 0$, so $Q_{i8} \neq 0$ for some $1 \leqslant i \leqslant 3$.

    This finishes the proof of the first part of the lemma.

    Now, we prove the claim about variables $x_4, x_5, x_6, x_7$. We consider a couple of cases, depending on how many of $d_4, d_5, d_6, d_7$ are nonzero:

    $1^{\circ} \ d_4, d_5, d_6, d_7 \neq 0.$ Since (from the considerations above) $\rank(Q_{1,2; 4:7})=2$, there exist indices $i,j$ such that $Q_{1,2; i,j}$ is invertible. We permute the variables so that $Q_{1,2; 6,7}$ is invertible. Then $(A)$ is satisfied.

    $2^{\circ}$ exactly three of $d_4, d_5, d_6, d_7$ are non-zero. Let $d_4, d_5, d_6 \neq 0, \ d_7=0$.

    $2.1^{\circ} Q_{1,2; 7} \neq 0$. In this case, since $\rank(Q_{1,2; 4:7})=2$, there exists $i$ such that $Q_{1,2; i,7}$ is invertible. Swapping $x_6$ with $x_i$ we can ensure that $(A)$ is satisfied.

    $2.2^{\circ} Q_{1,2; 7}=0$. In this case we must have $q_{87}\neq0$ (as otherwise the entire $7^{\text{th}}$ column would have been $0$. Moreover, since $\rank(Q_{1,2; 4:7})=2$, there exist $i,j \neq 7$ such that $Q_{1,2; i,j}$ is invertible. Swapping the variables around so that $d_5=0$ and $Q_{1,2; 6,7}$ is invertible we can guarantee that $(B)$ is satisfied.

    $3^{\circ}$ exactly two of $d_4, d_5, d_6, d_7$ are non-zero. Let $d_4, d_5 \neq 0$, $d_6, d_7=0$. If $Q_{1,2; 6,7}$ is invertible, $(A)$ is satisfied. We also cannot have $Q_{1,2; 6,7}=0$, as then the $6^{\text{th}}$ and $7^{\text{th}}$ columns of $Q$ would have been linearly dependent. Hence, we have $\rank(Q_{1,2; 6,7})=1$ and (permuting $x_6, x_7$ if necessary) $Q_{1,2; 7} \neq 0$, $Q_{1,2; 6}=c Q_{1,2; 7}$. Since $\rank(Q_{1,2; 4:7})=2$, we have for some $i \neq 6,7$ $Q_{1,2; i,7}$ invertible. Let $i=5$. Finally, we note that we cannot have $q_{86}=cq_{87}$ (as $Q$ is invertible). Hence, we can check that $\det(Q_{1,2,8; 5,6,7}) \neq 0$. Swapping $x_5, x_6$ we can guarantee that $(B)$ is satisfied.

    $4^{\circ}$ exactly one of $d_4, d_5, d_6, d_7$ is non-zero. Let $d_4 \neq 0$, $d_5, d_6, d_7=0$. Since in this case $\rank(Q_{[8]; 5,6,7})=\rank(Q_{1,2,8; \ 5,6,7})$, $Q_{1,2,8; \ 5,6,7}$ must be invertible. Hence, for some $5 \leqslant i,j \leqslant 7$ we also have $Q_{1,2; i,j} \neq 0$ (without loss of generality $i,j =6,7$), so $(B)$ is satisfied.

    $5^{\circ}$ $d_4= d_5=d_6=d_7=0$. This is not possible, as then we would have $\rank(Q_{[8]; 4:7}) \leqslant \rank(C)+\rank(\xi) \leqslant 2+1=3$, contradicting the assumption that $Q$ is non-degenerate.
\end{proof}

\begin{proof}[Proof of Lemma \ref{case_233_with_h_0}]
    We permute the variables as in Lemma \ref{structure_lemma_233_case}.
    
    Let us denote:
    \begin{equation}
        v=2\sum_{i=1}^7 q_{8i} x_i + q_{88}x_8, \ \ u_i=\sum_{j=4}^7 q_{ij} x_j \ \text{for} \ 1 \leqslant i \leqslant 2,
    \end{equation}
    and let $a_1, a_2$ be such that $Q_{3; 4:7} = a_1Q_{1; 4:7}+a_2Q_{2; 4:7}$ for some $a_1, a_2$ (which exist by Lemma \ref{structure_lemma_233_case}).
    We need to bound by $O(X^6)L^{-K}$ the following: \begin{multline}
        \sum_{\substack{x \in [x]^8, \\ v, u_1^2 \ll X}} \Lambda(x) \int_{\alpha \in \mathfrak{m}} e \bigg( \alpha \Big( x_1u_1 + x_2u_2 + x_3(a_2u_1+a_2u_2) + (x_1^3)^T E x_1^3 + \sum_{i=4}^7 d_i x_i^2 + vx_8 - N \Big) \bigg) d\alpha \cdot \\
        \cdot \int_{\beta_1^2, \gamma \in [0,1]} e \bigg( \sum_{i=1}^2 \beta_i \Big( u_i - \sum_{j=1}^4 q_{ij} x_j \Big) + \gamma \Big( v - 2\sum_{j=1}^7 q_{8j}x_j - q_{88}x_8 \Big) \bigg) \ d\beta_1 \ d\beta_2 \ d\gamma
    \end{multline}

    Let us divide the variables into the following groups: $$A = (x_8, x_5, x_6, x_7, v), \ B = (x_1, x_2, x_3, u_1, u_2), \ C=(x_4).$$
    Now let us check the conditions of Lemma \ref{the_main_tool_lemma}.

    $(1)$: Satisfied, with $P_A=\begin{pmatrix}
        0 & 0 & 0 & 0 & 1/2\\
        0 & d_5 & 0 & 0 & 0\\
        0 & 0 & d_6 & 0 & 0\\
        0 & 0 & 0 & d_7 & 0\\
        1/2 & 0 & 0 & 0 & 0\\
    \end{pmatrix}$, $P_B=\begin{pmatrix}
        e_{11} & e_{12} & e_{13} & 1 & 0\\
        e_{21} & e_{22} & e_{23} & 0 & 1\\
        e_{31} & e_{32} & e_{33} & a_1 & a_2\\
        1 & 0 & a_1 & 0 & 0\\
        0 & 1 & a_2 & 0 & 0\\
    \end{pmatrix}$ and $c=d_4$.

    $(2)$: From Lemma \ref{structure_lemma_233_case}, we may assume $d_4 \neq 0$.

    $(3A):$ We have $\mathcal{A}=\begin{pmatrix}
        q_{88} & 2q_{85} & 2q_{86} & 2q_{87} & -1\\
        0 & q_{15} & q_{16} & q_{17} & 0\\
        0 & q_{25} & q_{26} & q_{27} & 0\\
    \end{pmatrix}$. From the discussion at the beginning $\begin{pmatrix}
        q_{16} & q_{17}\\
        q_{26} & q_{27}\\
    \end{pmatrix}$ is invertible, so so is $\mathcal{A}_2$.

    $(3B):$ We have $\mathcal{B}=\begin{pmatrix}
        2q_{81} & 2q_{82} & 2q_{83} & 0 & 0\\
        0 & 0 & 0 & -1 & 0\\
        0 & 0 & 0 & 0 & -1\\
    \end{pmatrix}$. Since we have $q_{83} \neq 0$, $\mathcal{B}_2$ is invertible.

    $(4A):$ We have \begin{multline*}
        -\mathcal{A}_2^{-1} \cdot \mathcal{A}_1=-\begin{pmatrix}
        2q_{86} & 2q_{87} & -1\\
        q_{16} & q_{17} & 0\\
        q_{26} & q_{27} & 0\\
    \end{pmatrix}^{-1} \cdot \begin{pmatrix}
                q_{88} & 2q_{85}\\
                0 & q_{15}\\
                0 & q_{25}
            \end{pmatrix} \\
    = \frac{-1}{q_{16}q_{27}-q_{17}q_{26}}\begin{pmatrix}
                0 & q_{27} & -q_{17}\\
                0 & -q_{26} & q_{16}\\
                q_{26}q_{17} - q_{16}q_{27} & 2(q_{86}q_{27} - q_{87}q_{26}) & 2(q_{16}q_{87}-q_{17}q_{86})\\
            \end{pmatrix} \cdot \begin{pmatrix}
                q_{88} & 2q_{85}\\
                0 & q_{15}\\
                0 & q_{25}
            \end{pmatrix}\\
            =\frac{1}{q_{16}q_{27}-q_{17}q_{26}}\begin{pmatrix}
                0 & -q_{15}q_{27}+q_{17}q_{25}\\
                0 & -q_{25}q_{16}+q_{26}q_{15}\\
                q_{88}(q_{16}q_{27}-q_{26}q_{17} + ) & -D
            \end{pmatrix},
    \end{multline*} where $D=2\begin{vmatrix}
        q_{15} & q_{16} & q_{17}\\
        q_{25} & q_{26} & q_{27}\\
        q_{85} & q_{86} & q_{87}\\
    \end{vmatrix}$
    
    Hence, we need to show that the rank fo the following it at least $2$: $$\frac{1}{q_{16}q_{27}-q_{17}q_{26}}\begin{pmatrix}
            0 & 0 & 0 & 0 & 1/2\\
            0 & d_{5} & 0 & 0 & 0\\
            0 & 0 & d_6 & 0 & 0\\
            0 & 0 & 0 & d_{7} & 0\\
            1/2 & 0 & 0 & 0 & 0\\
        \end{pmatrix} \cdot \begin{pmatrix}
            q_{16}q_{27}-q_{26}q_{17} & 0\\
            0 & q_{16}q_{27}-q_{26}q_{17}\\
            0 & -q_{15}q_{27}+q_{17}q_{25}\\
            0 & -q_{25}q_{16}+q_{26}q_{15}\\
            q_{88}(q_{16}q_{27}-q_{26}q_{17}) & -D
        \end{pmatrix}.$$ The first row of this product is $\begin{pmatrix} q_{16}q_{27}-q_{17}q_{26} & -D/2 \end{pmatrix}$, the second row is $\begin{pmatrix}0 & q_{5}\end{pmatrix}$ and the last row is $\begin{pmatrix} 1/2 & 0\end{pmatrix}$. Since we have either $D \neq 0$ or $d_5 \neq 0$ from the discussion at the beginning, this is satisfied.

    $(4B):$ We need the rank of $P_B \cdot \begin{pmatrix}
        \begin{matrix}
            1 & 0\\
            0 & 1
        \end{matrix}\\
        -\mathcal{B}_2^{-1} \cdot \mathcal{B}_1
    \end{pmatrix}$ to be at least $2$. We will show that in particular, the bottom two rows of this matrix are linearly independent. Hence, we need the following to have rank $2$:
    $$
        \begin{pmatrix}
        1 & 0\\
        0 & 1\\
        a_1 & a_2\\
        0 & 0\\
        0 & 0\\
    \end{pmatrix}^T \cdot \begin{pmatrix}
            \begin{matrix}
                1 & 0\\
                0 & 1
            \end{matrix}\\
            - \begin{pmatrix}
            \begin{pmatrix}
                1/{2q_{83}} & 0 & 0\\
                0 & 1 & 0\\
                0 & 0 & 1
            \end{pmatrix} \cdot
            \begin{pmatrix}
                2q_{81} & 2q_{82}\\
                0 & 0\\
                0 & 0\\
            \end{pmatrix}
            \end{pmatrix}
        \end{pmatrix} = \begin{pmatrix}
        1 & 0\\
        0 & 1\\
        a_1 & a_2\\
        0 & 0\\
        0 & 0\\
    \end{pmatrix}^T \cdot \begin{pmatrix}
            1 & 0\\
            0 & 1\\
            -q_{81}/q_{83} & -q_{82}/q_{83}\\
            0 & 0\\
            0 & 0\\
        \end{pmatrix}.
    $$ Hence, we need (since $q_{83} \neq 0$) $\begin{vmatrix}
        q_{83}-a_1q_{81} & -a_1q_{82}\\
        -a_2q_{81} & q_{83}-a_2q_{82}\\
    \end{vmatrix} = q_{83}-a_1q_{81}-a_2q_{82} \neq 0$. This is the case, as $\rank(C \ \gamma) = 3$ and $C_{3} = a_1C_1+a_2C_2$ where $C_i$ denotes the $i^{\text{th}}$ row of $C$.
\end{proof}

\section{Proof in the case when any off-diagonal, invertible submatrix of \texorpdfstring{$Q$}{Q} is of type 0 or 1 (both vertically and horizontally).}
\label{section_twos}

In this section, we prove Theorem \ref{the_main_theorem} for $Q$ not satisfying the condition of either of the two previous sections.

\begin{proposition}
\label{proposition_01}
    Let $Q$ be an invertible, symmetric matrix of off-diagonal rank $3$, encoding a quadratic form in $8$ variables.

    If any off-diagonal, invertible $3 \times 3$ submatrix of $Q$ is of type at most 1 both vertically and diagonally, then we can bound its minor arc: $$\int_{\alpha \in \mathfrak{m}} \sum_{x \in [X]^8} \Lambda(x) e \big( \alpha(Q(x) - N) \big) \ d\alpha = O(X^6L^{-K}).$$
\end{proposition}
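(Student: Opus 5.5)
Throughout, $M=Q_{1:3;\ 6:8}$ is the fixed off-diagonal invertible submatrix. By hypothesis $M$ has horizontal type $0$ or $1$, so $Q$ has one of the shapes in Lemmas \ref{two_twos_lemma} and \ref{three_twos_lemma}. In either shape $\rank(Q_{1:3;\ 4:6})=1$ (the Remark after those lemmas), and more precisely the block linking $x_1,x_2,x_3$ to two of the middle variables has rank at most $1$. The idea is to apply the structural lemmas \emph{repeatedly}: to $M$ vertically, and also to other off-diagonal invertible $3\times 3$ submatrices built from $M$ by swapping one row or column index with $4$ or $5$. Every such submatrix is again of type at most $1$ in both directions, so each application yields further rank-one blocks or zero blocks. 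Combining these, I aim to show that $Q$ falls into one of three cases:
\begin{enumerate}[label=(\roman*)]
\item five indices with pairwise vanishing off-diagonal entries, handled by Lemma \ref{five_five_zero_lemma};
\item a variable decoupled from the rest, i.e.\ $Q=Q'(x_1,\dots,x_7)+q_{88}x_8^2$ after relabelling, handled by Lemma \ref{zero_column_lemma};
\item three rows which, after adjusting diagonal entries, have rank $1$, handled by Lemma \ref{three_columns_rank_1}.
\end{enumerate}
Otherwise the form is explicit enough to apply Lemma \ref{the_main_tool_lemma} directly.

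\textbf{Type $0$ horizontally.} Here $Q=\begin{pmatrix}E&0&U\\0&D&C\\U^T&C^T&F\end{pmatrix}$. Apply the vertical structure to $M$ as well. If $M$ is also vertically of type $0$, then $x_4,x_5$ are orthogonal to $x_6,x_7,x_8$, and the middle block becomes block-diagonal. I expect that $x_1,x_2,x_3,x_4,x_5$ then have vanishing mutual off-diagonal entries up to a rank-one piece, giving either Lemma \ref{five_five_zero_lemma} or Lemma \ref{three_columns_rank_1}. If $M$ is vertically of type $1$, write $C=ab^T$-type data as in Lemma \ref{two_twos_lemma}, transposed. If some $C_{i;\cdot}=0$, then Lemma \ref{zero_column_lemma} applies. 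Otherwise, replacing a column of $M$ by column $4$ or $5$ gives another invertible off-diagonal submatrix, and its type-$\le1$ condition should force $D$ to have a zero or $F$ to be nearly diagonal.

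\textbf{Type $1$ horizontally.} The form is $\begin{pmatrix}E&ab^T&C\\ ba^T&D+hbb^T&G\\ C^T&G^T&F\end{pmatrix}$, with the middle indices $\{3,4,5\}$ coupled to $\{1,2\}$ only through the rank-one block. I would introduce one auxiliary variable $u=b^Tx_{3:5}$ and one $w$ absorbing a row of $G$, mirroring the proof of Lemma \ref{three_columns_rank_1}. The tuples are $A$ = two diagonal-type middle variables together with $w$ and its partner, $B$ = $(x_1,x_2,x_{6..8},u)$ or a similar grouping, and $C$ = a middle variable with $d_i\ne0$. Non-degeneracy gives at least one $d_i\ne 0$, and I would arrange $(3A)$ and $(3B)$ by permutations, as in Lemma \ref{structure_lemma_233_case}. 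If $h\ne 0$ and no $d_i$ is suitable, the variable $u$ itself can serve as $C$ with $c=h$.

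\textbf{Main obstacle.} The hardest step is the combinatorial case analysis showing that the repeated application of Lemmas \ref{two_twos_lemma} and \ref{three_twos_lemma} to neighbouring submatrices really does produce enough vanishing or rank-one structure, together with verifying condition $(4)$ in the residual cases. For the latter I would argue by contradiction as in Lemma \ref{zero_column_lemma}: failure of rank $\ge2$ would produce two linear relations among the columns of $Q$, contradicting $\rank(Q)=8$.
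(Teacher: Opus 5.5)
Your overall plan matches the paper's. You apply Lemmas~\ref{two_twos_lemma} and~\ref{three_twos_lemma} to $M$ in both directions and to neighbouring off-diagonal submatrices. You then funnel everything into Lemmas~\ref{five_five_zero_lemma}, \ref{zero_column_lemma}, \ref{three_columns_rank_1} or a direct use of Lemma~\ref{the_main_tool_lemma}. But there is a genuine gap: the case analysis that constitutes the proof is replaced by ``I expect'' and ``should force'', and you yourself call it the main obstacle.

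Some of it is easy and was missed. If $M$ has type $0$ both ways, the two structural lemmas give $Q_{1:3;\ 4,5}=0$, $q_{45}=0$ and $Q_{4,5;\ 6:8}=0$. So $x_4$ is completely decoupled and Lemma~\ref{zero_column_lemma} applies at once. Horizontal type $0$ with vertical type $1$ needs no separate treatment: since $Q$ is symmetric, it is the transpose of the horizontal-type-$1$ case. Your claim that $D$ must acquire a zero or $F$ become ``nearly diagonal'' is unsupported.

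The real content is horizontal type $1$, where $Q_{1:3;\ 4:6}=ab^T$, $Q_{4:6;\ 4:6}=D+hbb^T$ and $b_2b_3\neq 0$. For $h\neq0$ the paper combines $\rank(Q_{3:5;\ 6:8})=1$ (vertical type $\leqslant1$) with $\rank(Q_{4,6;\ 5,7,8})\leqslant 1$ (from the neighbour $Q_{1:3;\ 5,7,8}$). Together these make rows $4,5,6$ rank one after adjusting diagonals, so Lemma~\ref{three_columns_rank_1} applies. For $h=0$ it splits on the vertical type of $M$ and uses the vertical parameter $h'$. It then uses the neighbours $Q_{1:3;\ 5,7,8}$ and $Q_{1,2,4;\ 6:8}$ (or $Q_{1:3;\ 4,7,8}$ and $Q_{1,2,5;\ 6:8}$), resp.\ $Q_{1:3;\ 5,6,8}$. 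This lands either in Lemma~\ref{three_columns_rank_1} or in the block form where both $Q_{1,2;\ 3:6}$ and $Q_{3:6;\ 7,8}$ have rank one. None of these deductions appears in your proposal.

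The one concrete construction you give also fails condition $(1)$ of Lemma~\ref{the_main_tool_lemma}. You have read the printed dimensions of Lemma~\ref{two_twos_lemma} literally; as it is actually used, the rank-one block is $Q_{1:3;\ 4:6}$, and column $6$ lies in both that block and $M$. The two columns of $M$ outside the rank-one block, $x_7$ and $x_8$, are coupled to $x_1,x_2,x_3$ through a block of rank $2$, since $M$ is invertible. They are also coupled to $x_4,x_5,x_6$ through $Q_{4:6;\ 7,8}$, which the structural lemma does not control. With only $u=b^Tx_{4:6}$ and a single row-sum variable, one of $x_7,x_8$ keeps all these couplings, so in general no variable can play the role of $C$. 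Your fallback $C=(u)$, $c=h$, fails too, because $u$ occurs in the cross term $2u\,a^Tx_{1:3}$. You would have to complete the square as in Section~\ref{section_three_three}, and $x_7,x_8$ would still be in the way.

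What is needed is a row-sum variable for each of $x_7$ and $x_8$, i.e.\ three auxiliary variables, as in the paper's final subcase: $w=b^Tx_{4:6}$, $s_1=2\sum_{i\leqslant 6}q_{7i}x_i+q_{77}x_7$ and $s_2=2\sum_{i\leqslant 7}q_{8i}x_i+q_{88}x_8$. With these your idea does go through whenever $D\neq0$. Writing $Q=x_{1:3}^TEx_{1:3}+2(a^Tx_{1:3})w+hw^2+x_{4:6}^TDx_{4:6}+x_7s_1+x_8s_2$, take:
\begin{itemize}
\item $C=(x_c)$ for a middle index $c$ whose entry of $D$ is nonzero;
\item $A=(x_7,x_8,x_k,s_1,s_2)$ for a middle index $k\neq c$ whose entry of $b$ is nonzero;
\item $B=(x_1,x_2,x_3,x_j,w)$ for the remaining middle index $j$, ordered so that $w$ and two of $x_1,x_2,x_3$ with an invertible $Q_{7,8;\ \cdot}$ block come last.
\end{itemize}
Then $(1)$--$(4)$ hold, with $(4B)$ following from the invertibility of $Q$ just as you anticipated. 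The case $D=0$ would still need a separate argument.
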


In the previous two sections we have considered the case when $Q$ has a submatrix of type $2$ or $3$ (horizontally or vertically). Hence, in this section we will assume that all $3 \times 3$ off-diagonal, invertible submatrices of $Q$ are of type $0$ or $1$.

Since $Q$ has off-diagonal rank $3$, it has such a matrix. Without loss of generality $M=Q_{1:3; 6:8}$ is invertible. By permuting the variables $x_1, x_2, x_3$ and $x_6, x_7, x_8$, we can ensure that $\rank(H_{M, 8}) \leqslant \rank(H_{M, 7}) \leqslant \rank(H_{M, 6})$ and $\rank(V_{M, 1}) \leqslant \rank(V_{M, 2}) \leqslant \rank(V_{M, 3})$.

Let us now break the proof into a few cases.

$1^{\circ}$. If $M$ is of type $0$ both vertically and diagonally, then from Lemma \ref{three_twos_lemma} applied twice, we get that $q_{4i}=q_{i4}=0$ for any $i \neq 4$, so we may apply Lemma \ref{zero_column_lemma}. Hence, we may assume $Q_{1:3; \ 6:8}$ is of type 1 horizontally.

$2^{\circ}$. $M$ is of type $1$ in at least one of the directions (so we may assume it is of type $1$ horizontally). From Lemma \ref{two_twos_lemma}, we have $Q=\begin{pmatrix}
    - & - & - &a_1b_1 &a_1b_2 &a_1b_3 & - & - &\\
    - & - & - &a_2b_1 &a_2b_2 &a_2b_3 & - & - &\\
    - & - & - &a_3b_1 &a_3b_2 &a_3b_3 & - & - &\\
   a_1b_1 &a_2b_1 &a_3b_1 & d_1+hb_1b_1 & hb_1b_2 & hb_1b_3 & - & - &\\
   a_1b_2 &a_2b_2 &a_3b_2 & hb_2b_1 & d_2+hb_2b_2 & hb_2b_3 & - & - &\\
   a_1b_3 &a_2b_3 & a_3b_3 & hb_3b_1 & hb_3b_2 & d_3+hb_3b_3 & - & - &\\
    - & - & - & - & - & - & - & - &\\
    - & - & - & - & - & - & - & - &\\
\end{pmatrix}$. Since $M=Q_{1:3; 6:8}$ is invertible, we have $b_3 \neq 0$. Moreover, since $M$ is of type $1$ horizontally, we also have $b_1 \neq 0$ or $b_2 \neq 0$ (we may assume $b_2 \neq 0$ by swapping $x_4, x_5$ if necessary). We will now consider subcases, depending on whether $h\neq 0$ or $h=0$.

$2.1^{\circ}$. $h \neq 0$. From our assumption that $M$ is of type $0$ or $1$ vertically and that $\rank(V_{M, 1}) \leqslant \rank(V_{M, 2}) \leqslant \rank(V_{M, 3})$, applying Lemmas \ref{two_twos_lemma} or \ref{three_twos_lemma}, we get that $\rank(Q_{3:5; 6:8})=1$. Since $hb_2b_3 \neq 0$, we may write: $q_{4i}=\frac{b_1}{b_2}q_{5i}$ and $q_{3i}=\frac{a_3}{hb_2}q_{5i}$ for $i \in \{7,8\}.$ If $b_1=0$, we can apply Lemma \ref{zero_column_lemma}, so we may assume $b_1 \neq 0$.

We note that $N=Q_{1:3; 5,7,8}$ is also off-diagonal and invertible. Hence, from Lemmas \ref{two_twos_lemma} and \ref{three_twos_lemma}, we have $\rank(Q_{4,6;5,7,8}) \leqslant 1$. Therefore, $q_{4i}=\frac{b_1}{b_3}q_{6i}$ for $i \in \{7, 8\}$. Hence, as $b_1\neq 0$, we have $q_{6i}=\frac{b_3}{b_2}q_{5i}$ for $i \in \{7,8\}$. Hence, we're in a situation from Lemma \ref{three_columns_rank_1} (the fourth, fifth and sixth rows of $Q$ have rank $1$ after adjusting their diagonal entries appropriately). This finishes the proof in this case.

$2.2^{\circ}$ $h=0$. Let us split this case into two cases, depending on the type of $M$ vertically.

$2.2.1^{\circ}$ $M$ is also of type $1$ vertically. Let $h'$ denote the vertical counterpart of parameter $h$ from Lemma \ref{two_twos_lemma} for $M$. If $h' \neq 0$, we can apply the Case $2.1^{\circ}$, so we may assume $h'=0$ as well. Hence, after recalling that $\rank(H_{M, 8}) \leqslant \rank(H_{M, 7}) \leqslant \rank(H_{M, 6})$ and $\rank(V_{M, 1}) \leqslant \rank(V_{M, 2}) \leqslant \rank(V_{M, 3})$ and applying Lemma \ref{two_twos_lemma} twice, we get that $Q$ is of the form:
\begin{equation}
    Q=\begin{pmatrix}
    - & - & - &a_1b_1 &a_1b_2 &a_1b_3 & - & - &\\
    - & - & - &a_2b_1 &a_2b_2 &a_2b_3 & - & - &\\
    - & - & d_1 &a_3b_1=0 &a_3b_2=0 &a_3b_3=u_3v_3 & u_3v_2 & u_3v_1 &\\
   a_1b_1 &a_2b_1 &a_3b_1=0 & d_2 & 0 & 0=u_2v_3 & u_2v_2 & u_2v_1 &\\
   a_1b_2 &a_2b_2 &a_3b_2=0 & 0 & d_3 & 0=u_1v_3 & u_1v_2 & u_1v_1 &\\
   a_1b_3 &a_2b_3 & a_3b_3=u_3v_3 & 0=u_2v_3 & 0=u_1v_3 & d_4 & - & - &\\
    - & - & u_3v_2 & u_2v_2 & u_1v_2 & - & - & - &\\
    - & - & u_3v_1 & u_2v_1 & u_1v_1 & - & - & - &\\
\end{pmatrix}.
\end{equation}
Since $b_2 \neq 0$ and $b_2a_3=0$, we have $a_3=0$ and so $q_{36}=q_{63}=0$.

Since $M$ is invertible, we have $u_3 \neq 0$. Moreover, since it is of type 1 vertically, at least one of $u_2, u_1$ is nonzero.
We will now show that in either case, $\rank(Q_{1,2; 3:6})=1$ and $\rank(Q_{3:6;7,8})=1$.

If $u_2 \neq 0$, let us consider $Q_{1:3; 5,7,8}$, which is also off-diagonal and invertible. Hence, it is vertically of type 0 or 1, so $\rank(Q_{4,6; 5,7,8}) \leqslant 1$. Since $u_2 \neq 0$, we may write $q_{67}=\frac{u_4}{u_2}v_2$ and $q_{68}=\frac{u_4}{u_2}v_1$ for some $u_4$. Hence, $\rank(Q_{3:6;7,8})=1$. We get that $\rank(Q_{1,2; 3:6})=1$ by considering $Q_{1,2,4; 6:8}$ in an analogous way.

Now, let us consider the case when $u_2=0$ (and so $u_1 \neq 0$). If also $b_1=0$, then we may apply Lemma \ref{zero_column_lemma} for variable $x_4$, so we may assume $b_1 \neq 0$. However, then we may apply exactly the same reasoning as above, for $Q_{1:3; 4,7,8}$ and $Q_{1,2,5; 6:8}$.

Hence, we've got that
$Q=\begin{pmatrix}
    E & a b^T & F\\
    ba^T & D & l k^T\\
    F^T & k l^T & G
\end{pmatrix},$ for $E, F, G \in M_{2 \times 2}(\z)$ ($E, G$ symmetric), $D \in M_{4 \times 4}(\z)$ diagonal, $a, k \in M_{1 \times 2}(\q)$ and $b, l \in M_{1 \times 4}(\q)$. We will now use Lemma \ref{the_main_tool_lemma} to finish the solution in this case. We note that here we redefined $a, b$.

Let us denote: $z_1, z_2=x_1, x_2$, $y_1, y_2, y_3, y_4=x_3, x_4, x_5, x_6$, $t_1, t_2=x_7, x_8$,

We note that if $F=0$, we could finish the proof by applying Lemma \ref{the_main_tool_lemma} with two extra variables: $w=\sum_{i=1}^4b_iy_{i}$ and $s=\sum_{i=1}^4l_iy_{i}$. Since this case is simpler than the case when $F \neq 0$, I will not elaborate on it and instead present the solution in the case when $F \neq 0$ (after swapping variables $x_1, x_2$ and $x_7, x_8$ if necessary, we may assume $f_{21}=q_{27}\neq 0$).

We also note that if $a_1, a_2, k_1$ or $k_2$ is $0$, then we may apply Lemma \ref{five_five_zero_lemma}, so we may assume these are nonzero.

Now we will show that we may permute variables $y_3, y_4, y_5, y_6$ (i.e., $x_3, x_4, x_5, x_6$) so that $d_3, d_4 \neq 0$ and $\begin{pmatrix}
    b_1 & b_2\\
    l_1 & l_2
\end{pmatrix}$ is invertible. First, we note that if $\rank(b, l) \leqslant 1$, we may apply Lemma \ref{three_columns_rank_1}, so we may assume $\rank(b, l)=2$. We also note that since $Q$ is invertible, at least two of the diagonal entries of $D$ must be nonzero. Let us not consider three cases:
$1^{\circ}.$ $d_1, d_2, d_3, d_4 \neq 0$. This case is trivial - if for some $i, j$ $\begin{pmatrix}
    b_i & b_j\\
    l_i & l_j
\end{pmatrix}$ is invertible, we can just swap $y_i, y_j$ with $y_1, y_2$ to obtain the desired configuration.
$2^{\circ}.$ Exactly one of $d_1, d_2, d_3, d_4$ is $0$ (let us say that $d_i=0$). Since $Q$ is invertible, we must have $\begin{pmatrix}
    b_i \\ l_i
\end{pmatrix}\neq 0$. Hence, since $\rank(b, l)=2$, $\begin{pmatrix}
    b_i & b_j \\ l_i & l_j
\end{pmatrix}$ is invertible for some $j$. Swapping $y_i, y_j$ with $y_1,y_2$ we obtain the required permutation.
$3^{\circ}.$ Exactly two of $d_1, d_2, d_3, d_4$ are nonzero (let us say that $d_i=d_j$). Since $Q$ is invertible, $\begin{pmatrix}
    b_i & b_j\\
    l_i & l_j
\end{pmatrix}$ must be invertible as well, so we again can just swap $y_i, y_j$ with $y_1,y_2$, obtaining the desired configuration.

Having made these initial observations on the properties of $Q$, we will now apply Lemma \ref{the_main_tool_lemma} to finish the proof in this case.

Let us denote:
\begin{equation}
    w=\sum_{i=1}^4b_iy_i, \ \ s_1=2\sum_{i=1}^6 q_{7i}x_i+q_{77}t_1, \ \ s_2=2\sum_{i=1}^7 q_{8i}x_i+q_{88}t_2.
\end{equation}
We need to bound by $O(X^6L^{-K})$ the following: \begin{multline}
    \sum_{\substack{z, t \in [X]^2, \ y \in [X]^4, \\ w, s_1, s_2 \ll X}} \Lambda(z)\Lambda(t)\Lambda(y) \int_{\alpha \in \mathfrak{m}} e \bigg(  \alpha \Big( z^TEz+2w(a_1z_1+a_2z_2)+\sum_{i=1}^4d_iy_i^2+s_1t_1+s_2t_2 - N \Big) \bigg) \ d\alpha \ \cdot \\
    \cdot \int_{\beta, \gamma_1^2 \in [0, 1]} e \bigg( \beta \Big( u - \sum_{i=1}^4 b_iy_i \Big) + \gamma_1 \Big( s_1 - 2\sum_{i=1}^6 q_{7i}x_i-q_{77}t_1 \Big) + \gamma_2 \Big( s_2 - 2\sum_{i=1}^7 q_{8i}x_i-q_{88}t_2 \Big) \bigg) \ d\beta \ d\gamma_1 \ d\gamma_2.
\end{multline}

Let us divide the variables into groups as follows: $$A=(t_1, y_3, y_1, y_2, s_1), \ B=(t_2, x_1, x_2, s_2, u), \ C=(y_4).$$ Now let us check that the conditions of Lemma \ref{the_main_tool_lemma} are satisfied.

$(1):$ This condition is satisfied, for:
\begin{equation*}
    P_A=\begin{pmatrix}
    0 & 0 & 0 & 0 & 1/2\\
    0 & d_3 & 0 & 0 & 0\\
    0 & 0 & d_1 & 0 & 0\\
    0 & 0 & 0 & d_2 & 0\\
    1/2 & 0 & 0 & 0 & 0\\
\end{pmatrix},
\end{equation*}
\begin{equation*}
    P_B=\begin{pmatrix}
    0 & 0 & 0 & 1/2 & 0\\
    0 & e_{11} & e_{12} & 0 & a_1\\
    0 & e_{21} & e_{22} & 0 & a_2\\
    1/2 & 0 & 0 & 0 & 0\\
    0 & a_1 & a_2 & 0 & 0\\
\end{pmatrix}
\end{equation*}
and $c=d_4$.

$(2):$ This condition is satisfied, as $d_4 \neq 0$.

$(3A):$ We have $\mathcal{A}=\begin{pmatrix}
    0 & b_3 & b_1 & b_2 & 0\\
    q_{77} & 2k_1l_3 & 2k_1l_1 & 2k_1l_2 & -1\\
    2q_{78} & 2k_2l_3 & 2k_2l_1 & 2k_2l_2 & 0
\end{pmatrix}$, so $\mathcal{A}_2=\begin{pmatrix}
    b_1 & b_2 & 0\\
    2k_1l_1 & 2k_1l_2 & -1\\
    2k_2l_1 & 2k_2l_2 & 0\\
\end{pmatrix}$ is invertible as so is $\begin{pmatrix}
    b_1 & b_2\\
    l_1 & l_2
\end{pmatrix}$ (and $k_2 \neq 0$).

$(3B):$ We have $\mathcal{B}=\begin{pmatrix}
    0 & 0 & 0 & 0 & -1\\
    0 & 2q_{17} & 2q_{27} & 0 & 0\\
    q_{88} & 2q_{18} & 2q_{28} & -1 & 0\\
\end{pmatrix}$, so $\mathcal{B}_2=\begin{pmatrix}
    0 & 0 & -1\\
    2q_{27} & 0 & 0\\
    2q_{28} & -1 & 0\\
\end{pmatrix}$ is invertible as we assumed that $q_{27} \neq 0$.

$(4A):$
We need to show that $\begin{pmatrix}
    0 & 0 & 0 & 0 & 1/2\\
    0 & d_3 & 0 & 0 & 0\\
    0 & 0 & d_1 & 0 & 0\\
    0 & 0 & 0 & d_2 & 0\\
    1/2 & 0 & 0 & 0 & 0\\
\end{pmatrix} \cdot \begin{pmatrix}
    \begin{matrix}
        1 & 0\\
        0 & 1\\
    \end{matrix}\\
    -\mathcal{A}_2^{-1}\mathcal{A}_1
\end{pmatrix}$ has rank at least $2$, which is clear, as the second and fifth rows of the product are, respectively, $\begin{pmatrix}0 & d_3\end{pmatrix}$ and $\begin{pmatrix}1/2 & 0\end{pmatrix}$ and $d_3 \neq 0$.

$(4B):$ We need to show that
$\begin{pmatrix}
    0 & 0 & 0 & 1/2 & 0\\
    0 & e_{11} & e_{12} & 0 & a_1\\
    0 & e_{21} & e_{22} & 0 & a_2\\
    1/2 & 0 & 0 & 0 & 0\\
    0 & a_1 & a_2 & 0 & 0\\
\end{pmatrix} \cdot \begin{pmatrix}
    \begin{matrix}
        1 & 0\\
        0 & 1
    \end{matrix}\\
    -\mathcal{B}_2^{-1}\cdot \mathcal{B}_1
\end{pmatrix}$ has rank at least $2$. Its fourth row is $\begin{pmatrix}1/2 & 0\end{pmatrix}$, so it suffices to show that it has a nonzero entry in its second column.
We have:
$$-\mathcal{B}_2^{-1}\cdot \mathcal{B}_1=-\begin{pmatrix}
    0 & \frac{1}{2q_{27}} & 0\\
    0 & \frac{q_{28}}{q_{27}} & -1\\
    -1 & 0 & 0\\
\end{pmatrix} \cdot \begin{pmatrix}
    0 & 0\\
    0 & 2q_{17}\\
    q_{88} & 2q_{18}\\
\end{pmatrix}=\begin{pmatrix}
    0 & -\frac{q_{17}}{q_{27}} \\
    q_{88} & 2\frac{q_{27}q_{18}-q_{17}q_{28}}{q_{27}} \\
    0 & 0 \\
\end{pmatrix},$$ so the second column of the product is: $$\begin{pmatrix}
    0 & 0 & 0 & 1/2 & 0\\
    0 & e_{11} & e_{12} & 0 & a_1\\
    0 & e_{21} & e_{22} & 0 & a_2\\
    1/2 & 0 & 0 & 0 & 0\\
    0 & a_1 & a_2 & 0 & 0\\
\end{pmatrix} \cdot \begin{pmatrix}
    0\\ 1\\ 
    -\frac{q_{17}}{q_{27}} \\
    2\frac{q_{27}q_{18}-q_{17}q_{28}}{q_{27}} \\
    0 \\
\end{pmatrix}=\begin{pmatrix}
    \frac{q_{27}q_{18}-q_{17}q_{28}}{q_{27}}\\
    e_{11}-\frac{q_{17}}{q_{27}}e_{12}\\
    e_{21}-\frac{q_{17}}{q_{27}}e_{22}\\
    0\\
    a_1-a_2\frac{q_{17}}{q_{27}}
\end{pmatrix}.$$ We notice that if all the entries in this column are $0$, then $q_{27} \cdot Q_{1; [8]} - q_{17} \cdot Q_{2; [8]}=0$, contradicting the assumption that $Q$ is invertible. Hence, the condition $(4B)$ is satisfied and we may apply Lemma \ref{the_main_tool_lemma} to finish the proof in this case.

$2.2.2^{\circ}$ $M$ is of type $0$ vertically. From Lemmas \ref{two_twos_lemma} and \ref{three_twos_lemma}, we have:
\begin{equation}
    Q=\begin{pmatrix}
    - & - & - &a_1b_1 &a_1b_2 &a_1b_3 & - & - &\\
    - & - & - &a_2b_1 &a_2b_2 &a_2b_3 & - & - &\\
    - & - & - &a_3b_1 &a_3b_2 &a_3b_3 & - & - &\\
   a_1b_1 &a_2b_1 &a_3b_1 & d_1 & 0 & 0 & 0 & 0 &\\
   a_1b_2 &a_2b_2 &a_3b_2 & 0 & d_2 & 0 & 0 & 0 &\\
   a_1b_3 &a_2b_3 & a_3b_3 & 0 & 0 & d_3 & - & - &\\
    - & - & - & 0 & 0 & - & - & - &\\
    - & - & - & 0 & 0 & - & - & - &\\
\end{pmatrix}.
\end{equation}
Since $b_2 \neq 0$, $N=Q_{1:3; 5,6,8}$ is also an off-diagonal, invertible matrix. Horizontally it is of type $1$ (as $M$ is). Hence, if its $h'$ (the counterpart of the parameter $h$ from Lemma \ref{two_twos_lemma} for $N$) was non-zero, we could apply the case $2.1$. Hence, we may assume $h' = 0$. If $N$ was vertically of type $1$, we could apply case $2.2.1$, so let us assume that it is vertically of type $0$. Therefore, we have $q_{67}=q_{68}=0$. Hence, $Q$ satisfies the conditions of Lemma \ref{three_columns_rank_1} (its fourth, fifth and sixth rows are linearly dependent, after adjusting the diagonal entries). This finishes the proof of this case and also of the Proposition.

\section{The proof of Theorem \ref{the_main_theorem}.}

\begin{proof}[Proof of Theorem \ref{the_main_theorem}]
    Let $Q$ be a symmetric, invertible matrix with integer coefficients of off-diagonal rank $3$, representing a homogeneous quadratic form.

    Since $Q$ is of off-diagonal rank $3$, it must have an off-diagonal, invertible $3 \times 3$ submatrix $M$, which both vertically and horizontally has types 0, 1, 2 or 3.

    If $Q$ has such a matrix with (vertical or horizontal) type 3 or 2, we may apply respectively Proposition \ref{proposition_3} or \ref{proposition_2} to bound the minor arcs: $$\int_{\alpha \in \mathfrak{m}} \sum_{x \in [X]^8} \Lambda(x) e \big( \alpha(Q(x) - N) \big) \ d\alpha = O(X^6L^{-K}).$$
    If, on the contrary, all off-diagonal, invertible $3 \times 3$ submatrices of $Q$ have rank at most 1 both vertically and diagonally, we may apply Proposition \ref{proposition_01} to also bound the minor arcs by $O(X^6L^{-K})$.

    Applying Proposition \ref{zhao_major} to estimate: $$\int_{\alpha \in \mathfrak{M}} \sum_{x \in [X]^8} \Lambda(x) e \big( \alpha(Q(x) - N) \big) \ d\alpha = \mathfrak{S}_Q(N)\mathfrak{J}_Q(N, X)+ O(X^6L^{-K}),$$ we get that: $$\sum_{x \in [X]^8} \Lambda(x) \cdot 1_{Q(x)=N}=\int_{\alpha \in [0, 1]} \sum_{x \in [X]^8} \Lambda(x) e \big( \alpha(Q(x) - N) \big) \ d\alpha =\mathfrak{S}_Q(N)\mathfrak{J}_Q(N, X)+ O(X^6L^{-K}),$$ as desired.
\end{proof}

\appendix

\section{A lemma on diagonal submatrices.}

\begin{lemma}
\label{diagonal_submatrix_of_rank_lemma}
    Suppose that $Q$ is an invertible, $m \times m$ symmetric matrix. Then for any $1 \leqslant k \leqslant m$ there exists a set of $k$ indices $I=\{i_1, \dots, i_k\}$ such that $\rank(Q_{I; \ I}) \geqslant k-1$.
\end{lemma}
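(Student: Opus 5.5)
We argue by induction on $k$, building the index set one element at a time while keeping the rank deficiency at most $1$. For $k=1$ there is nothing to prove, since $\rank(Q_{I;\,I}) \geqslant 0$ for any single index. For the step, suppose $I$ with $|I|=k<m$ satisfies $\rank(Q_{I;\,I}) \geqslant k-1$. We look for $j \notin I$ such that $I' = I \cup \{j\}$ satisfies $\rank(Q_{I';\,I'}) \geqslant k$.

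If $\rank(Q_{I;\,I}) = k$, then any $j$ works. Adding a row and a column to a square matrix cannot lower its rank, because $Q_{I;\,I}$ is a submatrix of $Q_{I';\,I'}$.

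The real case is $\rank(Q_{I;\,I}) = k-1$. Then there is a nonzero vector $c$ supported on $I$ with $Q_{I;\,I}c = 0$. We note two facts:

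\begin{enumerate}
\item Since $Q$ is invertible, $Qc \neq 0$. As $Q_{I;\,I}c=0$, the vector $Qc$ vanishes on $I$, so there is some $j \notin I$ with $(Qc)_j = Q_{j;\,I}c \neq 0$.
\item We claim that for this $j$ the $(k+1)\times(k+1)$ matrix $Q_{I';\,I'}$ has rank at least $k$.
\end{enumerate}

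To verify the claim, choose $k-1$ linearly independent columns of $Q_{I;\,I}$, indexed by $J \subset I$. Consider the columns of $Q_{I';\,I'}$ indexed by $J$ together with the column $j$; this gives $k$ columns of length $k+1$.

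Suppose some combination $\sum_{i \in J} \lambda_i Q_{I';\,i} + \mu Q_{I';\,j}$ vanishes. Pair it with $c$, extended by $0$ at $j$; that is, take $c^T$ of this combination restricted to the rows in $I$. By symmetry, $c^T Q_{I;\,i} = (Q_{I;\,I}c)_i = 0$ for each $i \in J$. Meanwhile $c^T Q_{I;\,j} = Q_{j;\,I}c \neq 0$. So the pairing equals $\mu \cdot Q_{j;\,I}c$, which forces $\mu = 0$. The independence of the $J$-columns, already within the rows $I$, then forces all $\lambda_i = 0$.

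Hence these $k$ columns are independent, so $\rank(Q_{I';\,I'}) \geqslant k$. This completes the induction step, and running the induction up to the desired size $k \leqslant m$ gives the lemma.

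The only delicate point is the use of symmetry to transfer the kernel vector $c$ of $Q_{I;\,I}$ into a left-kernel vector, so that it annihilates the chosen columns while detecting the new column $j$. Everything else is routine rank bookkeeping.
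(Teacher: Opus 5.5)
Your proof is correct and follows essentially the paper's argument. Both induct on $k$, and when the principal submatrix $Q_{I;\,I}$ is rank-deficient, both use invertibility of $Q$ to find an index $j\notin I$ whose column, restricted to the rows in $I$, escapes the column span of $Q_{I;\,I}$; the paper phrases this by contradiction (otherwise the rows of $Q$ indexed by $I$ would have rank $k-2$), while you make it constructive via the kernel vector $c$. Incidentally, symmetry is not needed: taking $c$ in the left kernel of $Q_{I;\,I}$ and using $c^{T}Q\neq 0$ gives the same conclusion for any invertible $Q$, just as the paper's argument implicitly does.
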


\begin{proof}
    We will proceed by induction on $k$. For $k=1$ the lemma is trivial.

    Now let us assume that $k \geqslant 2$ and that the thesis holds for $k-1$, but does not hold for $k$. Let us take a subset of indices $I$, $|I|=k-1$ with the largest possible rank of $Q_{I; \ I}$ (so from the inductive hypothesis we know that $\rank(Q_{I; \ I}) \geqslant k-2$). If $\rank(Q_{I; \ I}) = k-1$, then the thesis holds, so we may assume $\rank(Q_{I; \ I}) = k-2$. Let us assume that for any $i \notin I$ we have $\rank(Q_{I \cup \{i\}; \ I \cup \{i\}})=k-2$. In particular, for any $i \notin I$ we have $\rank(Q_{I; \ I \cup \{i\}})=k-2$, so (denoting $v_i=Q_{I; i}$) for any $i \notin I$ we have $v_i \in \langle v_j : j \in I \rangle$. Hence, $\rank(Q_{I; [m]})=k-2$, so there are $k-1$ row vectors of $Q$ which span a space of dimension $k-2$, while $Q$ is invertible. This contradiction finishes the proof.
\end{proof}

\end{document}